\newlist{longenum}{enumerate}{5}
\setlist[longenum, 1]{label=\roman*)}
\setlist[longenum, 2]{label=\alph*)}
\DeclareMathAlphabet{\pazocal}{OMS}{zplm}{m}{n}
\tikzset{->-/.style={decoration={
markings,
mark=at position .7 with {\arrow{>}}},postaction={decorate}}}
\tikzstyle{vertex} = [coordinate]
\newtheorem{lemma}{Lemma}[section]
\newtheorem{proposition}[lemma]{Proposition}
\newtheorem{remark-definition}[lemma]{Remark-Definition}
\newtheorem{theorem}[lemma]{Theorem}
\newtheorem{corollary}[lemma]{Corollary}
\newtheorem{proposition-conjecture}[lemma]{Proposition-conjecture}
\theoremstyle{definition}
\newtheorem{example}[lemma]{Example}
\newtheorem{definition}[lemma]{Definition}
\newtheorem{remark}[lemma]{Remark}
\newcommand{\eps}{{\varepsilon}}
\newcommand{\R}{{\mathbb R}}
\newcommand{\SDiff}{\mathrm{SDiff}}
\newcommand{\SVect}{\mathfrak{s}\mathfrak{vect}}
\newcommand{\Ker}[1]{\mathrm{Ker} \, #1}
\newcommand{\Ann}[1]{\mathrm{Ann} \, #1}
\newcommand{\diff}[1]{\mathrm{d}  #1}
\newcommand{\Id}{\mathrm{E}}
\newcommand{\Hom}{\mathrm{H}}
\newcommand{\Imm}[1]{\mathrm{Im} \, #1}
\newcommand{\Ad}{\mathrm{Ad}}
\newcommand{\Ham}{\mathrm{Ham}}
\newcommand{\oneform}{\alpha}
\newcommand{\oneformtwo}{\beta}
\newcommand{\oneformthree}{\gamma}
\newcommand{\circulation}{ \pazocal C}
\newcommand{\Diff}{\mathfrak{curl}}
\title{Classification of coadjoint orbits for symplectomorphism groups of surfaces}
\author{Ilia Kirillov\thanks{Department of Mathematics,
University of Toronto, Toronto, ON M5S 2E4, Canada;
e-mail: {\tt kirillov@math.utoronto.ca}
}}
\date{}
\pgfplotsset{compat=1.14} 
\begin{document}
\maketitle
\begin{abstract} 
We classify generic coadjoint orbits for symplectomorphism groups of compact symplectic surfaces with or without boundary. We also classify simple Morse functions on such surfaces up to a symplectomorphism.
\end{abstract}
\tableofcontents 
\section{Introduction}
\label{section:Intro}
The classification problem for coadjoint orbits for the action of symplectic (or area-preserving) diffeomorphisms in two dimensions was known to specialists in view of its application in fluid dynamics since the 1960s, and it was explicitly formulated in~\cite[see Section I.5]{arnold1999topological} in 1998. 
The same classification problem also arises in Poisson geometry since coadjoint orbits are symplectic leaves of the Lie-Poisson bracket, and also in representation theory in connection with the orbit method of A.Kirillov~\cite{kirillov1993orbit}. In the recent work~\cite{penna2020sdiff} the orbit method was applied to the symplectomorphism group of the two-sphere. The classification of generic coadjoint orbits was obtained in~\cite{izosimov2017classification, izosimov2016coadjoint} for the case of closed surfaces. In~\cite{izosimov2017classification} there is a list of open questions for the case of surfaces with boundary. We answer all those questions in Sections~\ref{section:global} and \ref{section:orbits}.
\par
The classification problem for coadjoint orbits for symplectomorphisms of a surface is closely related to a certain classification problem for functions, specifically, with each coadjoint orbit one can associate a (\emph{vorticity}) function up to a symplectomorphism
(see details in Section~\ref{subsection:orbits_to_functions}). Hence we are going to address the following two problems:
\begin{enumerate}
    \item Classify generic coadjoint orbits of symplectomorphism groups of surfaces.
    \item Classify generic smooth functions on symplectic surfaces up to symplectomorphisms.
\end{enumerate}
There is a number of papers devoted to the classification of functions with non-degenerate critical points on surfaces. In~\cite{hladysh2019simple} simple Morse functions on surfaces with boundary were classified with respect to smooth left-right equivalence. In~\cite{izosimov2016coadjoint} simple Morse functions on closed compact symplectic surfaces  were classified up to a symplectomorphism. We generalize the results of~\cite{izosimov2016coadjoint} to the case of surfaces with boundary. The classification of functions is given in Theorem~\ref{classification_of_functions_symplectic}. Roughy speaking, the classification theorem for functions states that there a one-to-one correspondence between functions up to a symplectomorphism and measured Reeb graphs up to an isomorphism. The classification of coadjoint orbits is given in Theorem~\ref{classification_of_orbits}. The classification result for coadjoint orbits in also given in terms of measured Reeb graphs supplemented with some additional data (so called augmented circulation Reeb graphs).

It is worth noting that the classification of functions in~\cite{izosimov2016coadjoint} is based on the classification of so-called simple Morse fibrations obtained in~\cite{dufour1994classification}; on the contrary, the proof in the present paper uses a different method so it gives an alternative proof for Theorem~3.11 from~\cite{izosimov2016coadjoint} (classification of functions in the case of closed surfaces).
\par
This paper is organised as follows: in Section~\ref{section:local} we give a local classification of Morse functions up to symplectomorphisms. In Section~\ref{section:global} we solve the global classification problem of simple Morse functions. In Section~\ref{section:orbits} we use the results of Section~\ref{section:global} in order to classify generic coadjoint orbits of the symplectomorphism group and illustrate these results with examples. In Section~\ref{section:Final_remarks} we discuss some open problems in this area, and in Appendix~\ref{section:euler} we present a hydrodynamical motivation for the main classification theorem. 
\subsection*{Acknowledgements}
The author is grateful to A.Izosimov, B.Khesin, E.Kudryavtseva, V.Matveev, A.Oshemkov, A. Prishlyak, and the anonymous referee for discussions and valuable comments, which significantly improved this paper. This research is supported in part by the Russian Science Foundation (grant No. 17-11-01303) and the Simons Foundation.
\section{Local classification of functions up to a symplectomorphism}
\label{section:local}
\subsection{Preliminaries}
Throughout this section, let $(M,\omega)$ be a compact connected symplectic surface with the boundary $\partial M.$
\begin{definition}[\cite{jankowski1972functions}]\label{def:simpleMorse}
A \emph{smooth} function $F \colon M \to \R$ is called a \emph{simple Morse} function if the following conditions hold:
\begin{enumerate}[label=(\roman*)]
\item
All critical points of $F$ are non-degenerate;
\item
$F$ does not have critical points on the boundary $\partial M$; 
\item
The restriction of $F$ to the boundary $\partial M$ is a Morse function;
\item
All critical values of $F$ and of its restriction $F|_{\partial M}$ are distinct.
\end{enumerate}
\end{definition}
\begin{proposition}[\cite{jankowski1972functions}]
    Simple Morse functions form an open and dense subset in the space of smooth functions in the $C^2$-topology. 
\end{proposition}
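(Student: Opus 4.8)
The plan is to treat this as a standard genericity result. Since conditions (i)--(iv) make no reference to the symplectic form, I would forget $\omega$ and regard $M$ merely as a compact connected surface with boundary, then establish openness and density separately. For \textbf{openness}, observe first that if $F$ satisfies (i), then by Morse's lemma its interior critical points are isolated, and by compactness of $M$ there are finitely many of them; likewise (iii) forces finitely many critical points of $F|_{\partial M}$. Each condition is then $C^2$-open: non-degeneracy of a critical point is an open condition on the $2$-jet, the absence of critical points on the compact set $\partial M$ (condition (ii)) is open since $|dF|$ is bounded below there, the Morse condition (iii) is open on $\partial M$, and distinctness of the \emph{finitely many} critical values (iv) is open once the critical points vary continuously. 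A finite intersection of open conditions is open.

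For \textbf{density}, I would embed $M \hookrightarrow \R^N$ and consider the family $F_a := F + L_a$, where $L_a(x) = a_1 x_1 + \cdots + a_N x_N$ is the restriction of an ambient linear functional. Since the differentials $dx_1, \dots, dx_N$ span $T_x^* M$ at every point, the map $(x,a) \mapsto d(F_a)_x$ is a submersion into $T^*M$, hence transverse to the zero section; the parametrized transversality theorem (Sard) then yields that for almost every $a$ the section $x \mapsto d(F_a)_x$ is transverse to the zero section, i.e.\ $F_a$ is Morse, giving (i). Applying the same argument to the closed $1$-manifold $\partial M$ gives (iii) for almost every $a$. For (ii), the condition $d(F_a)_x = 0$ with $x \in \partial M$ amounts to two equations on the $1$-dimensional source $\partial M$, so generically has no solutions; one more application of Sard removes critical points from the boundary. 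Taking $a$ arbitrarily small makes $F_a$ arbitrarily $C^2$-close to $F$ (indeed $C^\infty$-close, as $M$ is compact), so a generic small $a$ produces an arbitrarily close function satisfying (i)--(iii).

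Finally, to obtain (iv), I would start from a function satisfying (i)--(iii), with interior critical points $p_1, \dots, p_k$ and boundary critical points $q_1, \dots, q_m$, and add a $C^2$-small perturbation $\sum_i c_i \chi_i$, where each $\chi_i$ is a bump function equal to a constant near the corresponding critical point and supported in a small disjoint neighbourhood of it. For $|c_i|$ small this neither moves nor creates critical points, so (i)--(iii) are preserved (here I use their openness), while it shifts the critical values independently; the set of shift vectors $(c_1,\dots,c_{k+m})$ making all values distinct is open and dense in $\R^{k+m}$. The main obstacle is bookkeeping rather than conceptual: one must set up the boundary transversality arguments for (ii) and (iii) carefully and, crucially, ensure that the value-separating perturbation used for (iv) is small enough not to destroy the already-achieved conditions (i)--(iii), which is precisely why establishing openness first is essential.
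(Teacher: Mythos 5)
Your argument is correct in substance, but there is nothing in the paper to compare it against: the proposition is stated with a citation to Jankowski's 1972 paper and no proof is given in the text, so you have supplied a proof where the author supplies only a reference. Your route --- openness from the finiteness and non-degeneracy of critical points plus the $C^2$-openness of each of conditions (i)--(iv), density from an ambient linear perturbation $F+L_a$ combined with parametric transversality and Sard, and a final bump-function perturbation to separate the finitely many critical values --- is the standard genericity proof and is the one a reader would expect. Two small points worth making explicit if you write this up: for openness you must rule out the \emph{birth} of new critical points away from the old ones, which follows from the uniform lower bound on $\abs{dF}$ on the complement of small neighbourhoods of the (finitely many) critical points of $F$ and of $F|_{\partial M}$; and in the density step the three full-measure sets of parameters $a$ (giving (i), (ii) and (iii) respectively) must be intersected before choosing $a$ small, which you implicitly do but should state. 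Neither is a gap in the idea, only in the bookkeeping you yourself flag.
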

For a regular point $O\in M\setminus\partial M$ of the function $F$ there exists a coordinate chart $(p,q)$ centered at $O$ such that $F=p$ and $\omega=dp\wedge dq.$ For a regular point $O\in \partial M$ of the restriction $F|_{\partial M}$ there exists a coordinate chart $(p,q)$ in $M$ centered at $O$ such that $F=p,$ $\omega=dp\wedge dq$ and the boundary $\partial M$ is given by the equation $\{q=0\}.$ Next, we present the normal forms for the pair $(F,\omega)$ in a neighbourhood of a critical point for the function $F$ and of its restriction $F|_{\partial M}.$
\par 
\subsection{The case of a singular point inside the surface}
\begin{theorem}[\cite{CDV1979morse,toulet1996thesis}]\label{theorem:md}
Let $(M,\omega)$ be a symplectic surface, and $F\colon M\to\R$ be a simple Morse function. Let $O\in M\setminus\partial M$ be a critical point for the function $F.$ Then there exists a coordinate chart $(p,q)$ centered at $O$ such that $\omega=dp\wedge dq$ and $F=\lambda\circ S$ where $S=pq$ or $S=p^2+q^2.$ Here $\lambda$ is a smooth function of one variable defined in some neighborhood of the origin $0\in \R$ and $\lambda'(0) \ne 0.$
Moreover:
\begin{enumerate}[label=(\roman*)]
\item In the case $S=p^2+q^2,$ 
and the function $\lambda$ is uniquely determined by the pair $(F,\omega).$
\item In the case $S=pq,$ only the Taylor series of  the function $\lambda$ is uniquely determined by the pair $(F,\omega).$ 
In other words, if $(\tilde{p},\tilde{q})$ is another chart as above then $\tilde{p}\tilde{q} = pq+\psi(pq)$ (for sufficiently small $p,q,\tilde{p},\tilde{q}),$ where $\psi$ is a function of one variable flat\footnote{Here flat means that all derivatives of $\psi$ vanish at the origin.} 
at the origin. Furthermore, every function of one variable that is flat at the origin can be obtained in this way.
\end{enumerate}
\end{theorem}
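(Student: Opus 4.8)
The statement is the two-dimensional \emph{isochore} (volume-preserving) Morse lemma: on a surface the symplectic form $\omega$ is simply an area form, so what is being asserted is that one can standardize $F$ and the area form simultaneously, the only modulus being the one-variable function $\lambda$. The plan is to first normalize the foliation by level sets of $F$, then normalize $\omega$ while keeping $F$ a function of the resulting quadratic, and finally read off $\lambda$ as an invariant. As a first step I would apply the classical Morse lemma at the nondegenerate critical point $O$ to get (possibly non-symplectic) coordinates $(x,y)$ centered at $O$ in which $F=F(O)+\sigma$, where $\sigma=x^2+y^2$ at a local extremum and $\sigma=xy$ at a saddle (after a linear change turning $x^2-y^2$ into $xy$). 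In these coordinates $\omega=\rho\,dx\wedge dy$ for a smooth $\rho$ with $\rho(0)\ne 0$, and reversing orientation if necessary we may assume $\rho(0)>0$. Everything is then reduced to finding a further change of coordinates that carries $\rho\,dx\wedge dy$ to the standard form while keeping $F$ a function of $S=p^2+q^2$ or $S=pq$; the freedom to reparametrize the level sets of $\sigma$ is precisely what will generate $\lambda$.

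For the normalization I would run a Moser-type deformation adapted to the singular foliation $\mathcal F$ by connected components of the level sets of $\sigma$. Interpolate $\omega_t=\big(1+t(\rho-1)\big)\,dx\wedge dy$ for $t\in[0,1]$ and seek diffeomorphisms $\Phi_t$ with $\Phi_0=\id$ and $\Phi_t^*\omega_t$ independent of $t$, generated by vector fields $Y_t$ tangent to $\mathcal F$ so that the level sets of $\sigma$ are preserved setwise. Since all forms are multiples of the area form, the Moser equation collapses to the scalar relation $d(\iota_{Y_t}\omega_t)=(1-\rho)\,dx\wedge dy$, with the tangency to $\mathcal F$ a linear constraint on $\iota_{Y_t}\omega_t$. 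The obstruction to solving this is measured leaf by leaf: in coordinates $(\sigma,\theta)$ adapted to $\mathcal F$ the area form reads $g(\sigma,\theta)\,d\sigma\wedge d\theta$, and the question becomes whether $\rho\,dx\wedge dy$ and $dx\wedge dy$ are carried to one another by a foliation-preserving diffeomorphism, which holds precisely when their masses on each leafwise region agree. One arranges this by additionally reparametrizing the leaves $\sigma\mapsto h(\sigma)$: the enclosed area $A(s)=\int_{\{\sigma\le s\}}\rho\,dx\wedge dy$ is smooth and strictly monotone, so matching it to the standard model yields $\omega=dp\wedge dq$ with $F=\lambda(S)$, and the monotonicity of $A$ is exactly the source of $\lambda'(0)\ne 0$.

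For uniqueness I would use the enclosed area as the invariant. In the elliptic case the area $A(c)=\int_{\{F\le c\}}\omega$ bounded by the level curve $\{F=c\}$ near $O$ is a genuine symplectic invariant, and since $\{p^2+q^2=s\}$ bounds area $\pi s$ in the standard model, the identity $A(\lambda(s))=\pi s$ pins down $\lambda$ uniquely, giving~(i). In the hyperbolic case the curves $\{F=c\}$ are noncompact (two hyperbola branches), so no finite area is enclosed and only the Taylor coefficients of the corresponding regularized area integral are invariant; this yields~(ii), that $\lambda$ is determined only up to its Taylor series. The supplementary claim that $\tilde p\tilde q=pq+\psi(pq)$ for an arbitrary flat $\psi$, with every flat $\psi$ occurring, I would settle by exhibiting for each flat $\psi$ a symplectomorphism preserving the foliation $\{pq=\mathrm{const}\}$ that realizes it (for instance the time-one map of a Hamiltonian flow generated by a flat function of $pq$), and conversely by checking that any two normalizations differ by such a map.

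The main obstacle is the deformation step near the singular leaf, since the leaf through $O$ is singular (a point in the elliptic case, a transverse pair of lines in the hyperbolic case): one must solve the Moser/primitive equation with the tangency constraint while preserving smoothness across $O$. Controlling the smoothness of the primitive, equivalently of $\lambda$, at the critical value is the delicate point, and it is exactly here that the two cases diverge—compactness of the level curves gives the full invariant in the elliptic case, whereas their noncompactness in the hyperbolic case leaves the flat ambiguity recorded in~(ii).
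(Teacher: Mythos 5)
First, note that the paper does not prove Theorem~\ref{theorem:md} itself: it attributes the normal form and the uniqueness statements to \cite{CDV1979morse,toulet1996thesis} and the final realizability claim to Toulet's Lemma~3.2a. Your outline follows exactly the route of those references --- Morse lemma first, then a Moser-type deformation tangent to the level foliation with the enclosed area as the matching invariant, and uniqueness of $\lambda$ in the elliptic case read off from $A(\lambda(s))=\pi s$. That part is sound, and it is also the strategy the paper itself carries out in detail for the boundary analogue (Theorem~\ref{theorem:md_boundary}). Two caveats: the hard analytic content of Colin de Verdi\`ere--Vey is precisely the step you defer (solving the Moser/primitive equation smoothly across the singular leaf, i.e.\ the division lemma), so as written the existence part is a plan rather than a proof; and in the hyperbolic case the invariant is the Taylor series of the coefficient of the logarithmic term in the regularized area, which you gesture at but do not pin down.

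There is one concrete error. To realize a given flat $\psi$ you propose the time-one map of the Hamiltonian flow of a flat function of $pq$. But if $H=g(pq)$ then $\{pq,\,g(pq)\}=0$, so the flow of $X_H$ preserves $pq$ exactly ($\dot p=g'(pq)p$, $\dot q=-g'(pq)q$, hence $\frac{d}{dt}(pq)=0$); its time-one map gives $\tilde p\tilde q=pq$ and therefore realizes only $\psi\equiv 0$. To produce a nontrivial flat $\psi$ one must use a Hamiltonian that is \emph{not} a function of $S$ alone. The correct mechanism is the Moser path argument that the paper writes out for Theorem~\ref{theorem:md_boundary}(ii): set $f^t=pq+t\psi(pq)$, seek $\Phi^{t*}f^t=pq$, and solve the resulting transport equation $qH^t_q-pH^t_p=-\psi^t(pq)$ with $\psi^t=\psi/(1+t\dot\psi)$; an explicit solution is $H^t=-\psi^t(pq)\ln\abs{p}$, which extends smoothly (and flatly) across $\{p=0\}$ precisely because $\psi^t$ is flat at the origin. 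This is the content of Toulet's Lemma~3.2a that the paper invokes, and it is the analogue of the Hamiltonian $H^t(P,Q)=P\psi^t(Q)+g^t(Q)$ used in the paper's own boundary proof. You should replace your parenthetical construction by an argument of this type.
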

\begin{proof}
All statements of this theorem but the last one are proved in~\cite{CDV1979morse,toulet1996thesis}. The last statement is proved in~\cite[see Lemma~$3.2a$]{toulet1996thesis}.
\end{proof}
\subsection{The case of a singular point on the boundary} 
\begin{theorem}\label{theorem:md_boundary}
Let $(M,\omega)$ be a symplectic surface, and $F\colon M\to\R$ be a simple Morse function. 
Let $O\in \partial M$ be a regular point of $F$ and a non-degenerate critical point of its restriction $F|_{\partial M}.$
Then there exists a coordinate chart $(p, q)$ centered at $O$ such that $\omega=dp\wedge dq$ and $F=\lambda\circ S$ where $S=q+p^2$ or $S=q-p^2.$ Here $\lambda$ is a smooth function of one variable defined in some neighborhood of the origin $0\in \R$ and $\dot{\lambda}(0) \ne 0.$ In this chart $M$ is defined by $q\ge 0$ and the boundary $\partial M$ is given by the equation $\{q=0\},$ see Figure~\ref{levelsetsboundary}. Moreover:
\begin{enumerate}[label=(\roman*)]
\item In the case $S=q+p^2,$
and the function $\lambda$ is uniquely determined by the pair $(F,\omega).$
\item In the case $S=q-p^2,$ only the Taylor series of  the function $\lambda$ is uniquely determined by the pair $(F,\omega).$ 
In other words, if $(\tilde{p},\tilde{q})$ is another chart as above then $\tilde{q}-\tilde{p}^2=q-p^2+\psi(q-p^2).$
Furthermore, every function of one variable that is flat at the origin can be obtained in this way.
\end{enumerate}
\end{theorem}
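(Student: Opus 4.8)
The plan is to proceed in two stages: first bring the pair $(F,\partial M)$ to a smooth normal form (forgetting $\omega$), and then adjust the symplectic form while preserving the foliation by level sets of $F$ together with the boundary.

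\emph{Stage 1 (smooth prenormal form).} Choose any coordinates $(u,v)$ centred at $O$ with $\partial M=\{v=0\}$ and $M=\{v\ge 0\}$. Since $O$ is a regular point of $F$ but a nondegenerate critical point of $F|_{\partial M}$, we have $F_u(O)=0$, $F_v(O)\ne 0$ and $F_{uu}(O)\ne 0$. Applying the one-dimensional Morse lemma to $F|_{\partial M}$ I may assume, after a boundary-preserving change of the $u$-coordinate, that $F(u,0)=\epsilon u^2$ with $\epsilon=\pm1$; Hadamard's lemma then gives $F=\epsilon u^2+v\,h(u,v)$ with $h(O)=F_v(O)\ne0$. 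Setting $p:=u$ and $q:=(\operatorname{sgn}h(O))\,v\,h(u,v)$ produces coordinates with $M=\{q\ge0\}$, $\partial M=\{q=0\}$ and $F=\pm(q+\sigma p^2)$, where $\sigma=+1$ if the signs of $F_{uu}(O)$ and $F_v(O)$ agree and $\sigma=-1$ otherwise. Geometrically $\sigma$ is intrinsic: $\sigma=+1$ exactly when the critical level curve $\{F=F(O)\}$ meets $M$ only at $O$ (the elliptic case), and $\sigma=-1$ when it enters the interior of $M$ (the hyperbolic case). Thus $F$, as a smooth function with boundary, is equivalent to $q+p^2$ or $q-p^2$.

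\emph{Stage 2 (symplectic adjustment).} After Stage 1 the level sets of $F$ coincide with those of $S=q+\sigma p^2$, while $\omega$ is some area form. I want symplectic coordinates $(P,Q)$ with $M=\{Q\ge0\}$ and $F=\lambda(Q+\sigma P^2)$. The key simplification is the identity: if $\Sigma:=Q+\sigma P^2$ then $dP\wedge d\Sigma=dP\wedge dQ$, so requiring $\omega=dP\wedge dQ$ is the same as requiring $(P,\Sigma)$ to be a Darboux pair, with the boundary given by $\{\Sigma=\sigma P^2\}$. I therefore look for $\Sigma=\mu(F)$ (a reparametrisation of $F$, so that $\lambda=\mu^{-1}$ and $\dot\lambda(0)\ne0$ automatically) together with a conjugate function $P$. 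For any submersive $\Sigma$ one can solve $dP\wedge d\Sigma=\omega$ by integrating $\omega$ along the $\Sigma$-level curves (equivalently, $P$ is the time parameter of the Hamiltonian flow of $\Sigma$), and this determines $P$ up to an additive function of $\Sigma$. The remaining freedom — the choice of $\mu$ and of that additive term — is then fixed by the boundary tangency condition $\Sigma|_{\partial M}=\sigma P^2$.

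\emph{Uniqueness and the main obstacle.} In the elliptic case ($\sigma=+1$) the invariant that pins down $\lambda$ is the symplectic area $\mathcal A(c)=\int_{\{F\le c\}\cap U}\omega$ of the region cut off near $O$ by the level curve $\{F=c\}$: in the model coordinates this equals $\tfrac43(\lambda^{-1}(c))^{3/2}$, so two admissible charts must share the same $\lambda$, giving statement (i). The crux of the argument, and the step I expect to be the main obstacle, is controlling everything across the singular level $\{F=F(O)\}$ through the tangency point $O$, where the foliation fails to be a submersion and the leaf space is non-Hausdorff. In the hyperbolic case ($\sigma=-1$) the separatrix $\{q=p^2\}$ splits a neighbourhood of $O$ into regions that communicate only through flat data, so the area matching determines $\lambda$ only to infinite order; this is precisely the mechanism behind Theorem~\ref{theorem:md}(ii), and I expect the flat ambiguity $\tilde q-\tilde p^2=q-p^2+\psi(q-p^2)$, together with the realisation of every flat $\psi$ by an area- and boundary-preserving map, to follow from the same construction (Toulet's lemma) applied near the boundary separatrix rather than near an interior saddle.
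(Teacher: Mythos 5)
Your treatment of statement (i) coincides with the paper's: the area $\int_{\{F\le\varepsilon\}}\omega=\tfrac{4}{3}\lambda^{-1}(\varepsilon)^{3/2}$ of the region cut off by the level $\{F=\varepsilon\}$ pins down $\lambda$. The existence part you sketch in Stages 1--2 is a reasonable outline, but the paper does not reprove existence (it cites \cite{kirillov2018morse, kourliouros2019local}); note that your claim that the residual freedom (the reparametrisation $\mu$ and the additive function of $\Sigma$) can always be used to force the boundary to be exactly the parabola $\{\Sigma=\sigma P^2\}$ is precisely the nontrivial content of those references, and in your write-up it is asserted rather than argued.

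The genuine gaps are in part (ii), in both directions. First, to show that the Taylor series of $\lambda$ is determined you cannot use the area of $\{F\le\varepsilon\}$ directly: in the hyperbolic case that region is not localized near $O$ (it runs off along the separatrix), so there is no well-defined germ of an area function attached to $O$ alone. The paper's fix is to introduce an auxiliary transversal curve $\ell$ and the region $R_\varepsilon$ of Figure~\ref{fig:area_function}, whose area equals $\tfrac{4}{3}(-\lambda^{-1}(\varepsilon))^{3/2}$ plus a smooth function of $\varepsilon$ depending on $\ell$; comparing two charts one finds that the difference of the two $3/2$-power terms is smooth, and Lemma~\ref{lemma_sqrt_diff} is then needed to convert ``smooth difference of square roots'' into ``flat difference of the arguments.'' Your proposal contains neither the localization device nor this lemma, and ``the regions communicate only through flat data'' is a heuristic, not an argument. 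Second, for the realization of an arbitrary flat $\psi$ you defer to Toulet's interior-saddle construction, but the new difficulty here is exactly that the realizing symplectomorphism must preserve $\partial M$, a constraint absent in the interior case. The paper handles this with a Moser path whose Hamiltonian, in the parabolic coordinates $(P,Q)=(p,\,q-p^2)$, is $H^t(P,Q)=P\psi^t(Q)+g^t(Q)$ with $g^t(x)=-\sqrt{-x}\,\psi^t(x)$ for $x\le 0$ and $g^t(x)=0$ for $x>0$, chosen so that $H^t$ vanishes on the boundary parabola $\{Q+P^2=0\}$; the smoothness of $g^t$ holds only because $\psi$ is flat, and this is where flatness is actually used. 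Without some such boundary-adapted construction, the second half of (ii) does not follow from the closed-surface case.
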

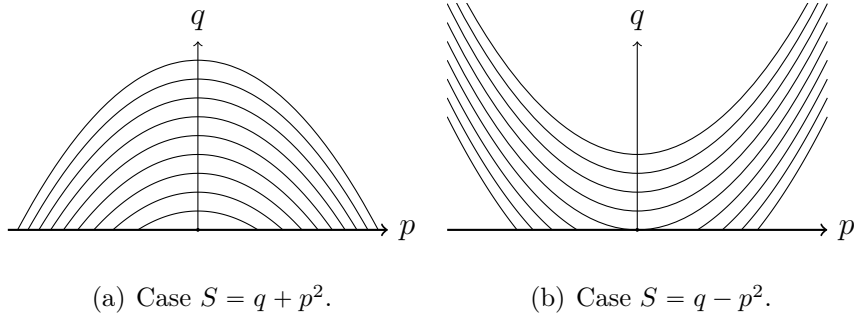
\begin{figure}[H]
\centering
\subfigure[Case $S=q+p^2.$]{
	\begin{tikzpicture}[scale=2.5]
	\filldraw (0,0) circle (0.2pt);
	\clip (-1,-0.2) rectangle (1.2,1.2);
	\draw[->, thick] (-1,0) -- (1,0) node[right] {$p$};
	\draw[->] (0,0) -- (0,1) node[above] {$q$};
	\foreach \x in {0.0, 0.1, 0.2, 0.3, 0.4, 0.5, 0.6, 0.7,0.8, 0.9}
		\draw[domain={-sqrt(\x)}:{sqrt(\x)}, smooth, variable=\y] plot ({\y},{\x-\y*\y});
	\end{tikzpicture}}
\subfigure[Case $S=q-p^2.$]{
	\begin{tikzpicture}[scale=2.5]
	\filldraw (0,0) circle (0.2pt);
	\clip (-1,-0.2) rectangle (1.2,1.2);
	\draw[->, thick] (-1,0) -- (1,0) node[right] {$p$};
	\draw[->] (0,0) -- (0,1) node[above] {$q$};
	\foreach \x in {0.4, 0.3, 0.2, 0.1, 0.0}
		\draw  (-1,\x+1) parabola bend (0,\x) (1,\x+1);
	\foreach \x in {0.4, 0.3, 0.2, 0.1}
		\draw[domain={sqrt(\x)}:1, smooth, variable=\y] plot ({\y},{\y*\y-\x});
	\foreach \x in {0.4, 0.3, 0.2, 0.1}
		\draw[domain=-1:{-sqrt(\x)}, smooth, variable=\z] plot ({\z},{\z*\z-\x});
	\end{tikzpicture}}
\caption{Level sets of the function $S$. The horizontal axis corresponds to the boundary $\partial M.$}
\label{levelsetsboundary}
\end{figure}
Before we proceed with the proof of this theorem let us formulate and prove a lemma.
\begin{lemma}\label{lemma_sqrt_diff}
Let $h_1$ and $h_2$ be two smooth non-negative functions $\R_+\to\R_+$ such that $h_i(0)=0$ and $\dot{h}_i(0)>0$ for $i=1,2.$ Then the following statements are equivalent:
\begin{enumerate}[label=(\roman*)]
\item The difference $h_1-h_2$ is a function flat at the origin, i.e. the Taylor series $J^\infty_0h_1$ and $J^\infty_0h_2$ are equal to each other.
\item The difference 
$
\sqrt{h_1}-\sqrt{h_2}
$ 
is a smooth function $\R_+\to\R.$
\item
The difference 
$
\sqrt{h_1}-\sqrt{h_2}
$ 
is a smooth function $\R_+\to\R$ flat at the origin. 
\end{enumerate}
\end{lemma}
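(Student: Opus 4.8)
The plan is to strip off the square-root singularity at the origin with Hadamard's lemma, reduce all three conditions to a single flatness condition on an auxiliary smooth function, and then isolate the one genuinely analytic step, namely deciding when a product $\sqrt{x}\,G(x)$ is smooth.

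First I would apply Hadamard's lemma to each $h_i$. Since $h_i(0)=0$, we may write $h_i(x)=x\,g_i(x)$ with $g_i$ smooth on $\R_+$ and $g_i(0)=\dot h_i(0)>0$. Because $g_i(0)>0$, the composition $\sqrt{g_i}$ is smooth near the origin, and on $\R_+$ one has $\sqrt{h_i(x)}=\sqrt{x}\,\sqrt{g_i(x)}$. Setting $G:=\sqrt{g_1}-\sqrt{g_2}$, which is smooth near $0$, this gives the factorisation
\[
\sqrt{h_1}-\sqrt{h_2}=\sqrt{x}\,G(x).
\]
The whole lemma is thereby recast as a statement about the single function $\sqrt{x}\,G(x)$.

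The analytic core, which I expect to be the main obstacle, is the following claim: $\sqrt{x}\,G(x)$ is smooth on $\R_+$ if and only if $G$ is flat at the origin, and in that case $\sqrt{x}\,G(x)$ is itself flat. For the ``if'' direction, flatness of $G$ lets me factor $G(x)=x^N\widetilde G_N(x)$ with $\widetilde G_N$ smooth, for every $N$; then $\sqrt{x}\,G(x)=x^{N+1/2}\widetilde G_N(x)$ is of class $C^N$, and since $N$ is arbitrary the product is $C^\infty$ and manifestly flat. For the ``only if'' direction I would argue by contraposition: if $G$ is not flat, let $k$ be the order of its first non-vanishing Taylor coefficient, so $G(x)=a\,x^k+x^{k+1}\rho(x)$ with $a\ne 0$ and $\rho$ smooth; then
\[
\sqrt{x}\,G(x)=a\,x^{k+1/2}+x^{k+3/2}\rho(x),
\]
where the second summand is of class $C^{k+1}$ but $x^{k+1/2}$ is $C^k$ and not $C^{k+1}$, so the product fails to be $C^{k+1}$. (Equivalently, substituting $x=t^2$ turns $\sqrt{x}\,G(x)$ into the even function $|t|\,G(t^2)$, whose smoothness forces all coefficients of the odd powers $|t|^{2j+1}$ to vanish; but I prefer the self-contained expansion above.)

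It then remains to match ``$G$ flat'' with condition (i), which is pure bookkeeping on $\infty$-jets at $0$. Squaring the identity $(\sqrt{g_i})^2=g_i$ in the ring of formal power series, and using that squaring is injective on jets with positive constant term (guaranteed by $g_i(0)>0$), shows that $G$ is flat, i.e. $J^\infty_0\sqrt{g_1}=J^\infty_0\sqrt{g_2}$, if and only if $J^\infty_0 g_1=J^\infty_0 g_2$. Since $h_i=x\,g_i$ and multiplication by $x$ is injective on formal power series, the latter is equivalent to $J^\infty_0 h_1=J^\infty_0 h_2$, which is precisely (i). Chaining the equivalences gives (i) $\Leftrightarrow$ $G$ flat $\Leftrightarrow$ [$\sqrt{h_1}-\sqrt{h_2}$ smooth, and then automatically flat], that is (i) $\Leftrightarrow$ (ii) $\Leftrightarrow$ (iii), the implication (iii) $\Rightarrow$ (ii) being trivial. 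The only delicate point in the whole argument is the half-integer exponent step isolating $x^{k+1/2}$; everything else is formal manipulation of jets.
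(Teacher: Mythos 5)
Your proof is correct, but it takes a genuinely different route from the paper's after the shared first step. Both arguments open with Hadamard's lemma, but the paper works with the difference $h_1-h_2$ via the identity
$\sqrt{h_1}-\sqrt{h_2}=\tfrac{1}{\sqrt{x}}\,(h_1-h_2)/(\sqrt{\tilde h_1}+\sqrt{\tilde h_2})$,
and proves $(ii)\implies(i)$ by writing $h_1-h_2=x^nf$ and $\sqrt{h_1}-\sqrt{h_2}=x^m\tilde g$, squaring, and observing that the leading power of $x$ is odd on one side and even on the other. You instead factor each term separately as $\sqrt{h_i}=\sqrt{x}\,\sqrt{g_i}$, reduce everything to the single clean claim that $\sqrt{x}\,G(x)$ is smooth iff $G$ is flat (proved by exhibiting the failure of $x^{k+1/2}$ to be $C^{k+1}$), and then translate flatness of $G=\sqrt{g_1}-\sqrt{g_2}$ back into condition $(i)$ via injectivity of squaring on jets with positive constant term. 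What the paper's squaring trick buys is the avoidance of any direct discussion of differentiability classes of half-integer powers and of the jet bookkeeping you need at the end; what your version buys is a sharper and reusable intermediate statement (the characterization of when $\sqrt{x}\,G$ is smooth) and a proof of $(i)\implies(iii)$ that is slightly more explicit than the paper's one-line appeal to its displayed formula. Both ultimately rest on the same phenomenon, the incompatibility of a genuine half-integer leading power with smoothness, and both are complete; just make sure, in your jet step, to record that equal jets of $g_1,g_2$ give equal jets of $\sqrt{g_1},\sqrt{g_2}$ because the $\infty$-jet of a composition with the smooth function $\sqrt{\cdot}$ near $g_i(0)>0$ depends only on the $\infty$-jet of $g_i$ (the converse direction is the one covered by your injectivity-of-squaring remark).
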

\begin{proof}
The implication $(iii)\implies (ii)$ is evident so it enough to show that 
$
(i)\implies (iii)
$
and 
$
(ii)\implies (i).
$
Let us start with implication $(i)\implies (iii).$ It follows from Hadamard's lemma that there exist smooth functions $\tilde{h}_1$ and $\tilde{h}_2$ such that 
$
h_i=x\tilde{h}_i
$ 
and 
$\tilde{h}_i(0)>0$
for 
$
i=1,2.
$
We have the following formula for the difference 
$
\sqrt{h_1}-\sqrt{h_2}:
$
\begin{equation}
\label{eqn_sqrt_diff}
\begin{aligned}
\sqrt{h_1(x)}-\sqrt{h_2(x)}
=\frac{1}{\sqrt{x}}\frac{h_1(x)-h_2(x)}{\sqrt{\tilde{h}_1(x)}+\sqrt{\tilde{h}_2(x)}}
\end{aligned}   
\end{equation}
for small enough $x.$
It follows from the formula \eqref{eqn_sqrt_diff} that the difference $\sqrt{h_1}-\sqrt{h_2}$ is smooth and flat at the origin whenever the difference $h_1-h_2$ is flat the origin. 
\par
It remains to show that $(ii)$ implies (i). Denote by $g$ the smooth function $\sqrt{h_1}-\sqrt{h_2}.$ Assume that the difference $h_1-h_2$ is not flat at the origin. Then there exists a number $n\in\mathbb{N}$ and a smooth non-zero function $f:\R_+\to \R\setminus0$ such that 
$
h_1(x)-h_2(x)=x^n f(x).
$
It is useful to rewrite formula \eqref{eqn_sqrt_diff} in the following form:
\begin{equation}
\label{eqn_sqrt_diff_alt}
\begin{aligned}
g(x)\sqrt{x}\left(\sqrt{\tilde{h}_1(x)}+\sqrt{\tilde{h}_2(x)}\right)
=x^n f(x).
\end{aligned}   
\end{equation}
Formula \eqref{eqn_sqrt_diff_alt} implies that the function $f$ is flat at the origin whenever the function $g$ is flat at the origin. The function $f$ is a non-zero function so we conclude that the function $g$ is not flat at the origin. Therefore there exists a number $m\in\mathbb{N}$ and a smooth non-zero function $\tilde{g}:\R_+\to \R\setminus0$ such that 
$
g(x)=x^m\tilde{g}(x).
$
Now we take the square of both sides of \eqref{eqn_sqrt_diff_alt} and obtain the following formula:
\begin{equation}
\begin{aligned}
x^{1+2m}\tilde{g}(x)^2\left(\sqrt{\tilde{h}_1(x)}+\sqrt{\tilde{h}_2(x)}\right)^2
=x^{2n} f(x)^2.
\end{aligned}   
\end{equation}
That gives us a contradiction since the Taylor series of the left hand side starts with an odd power of $x$ and the Taylor series of the right hand side starts with an even power of $x.$ We conclude that the function 
$
h_1-h_2
$ 
is flat at the origin.
\end{proof}
Now we proceed with the proof of Theorem~\ref{theorem:md_boundary}.
\begin{proof}[Proof of Theorem~\ref{theorem:md_boundary}]
Without loss of generality we can assume that $F(O)=0.$ The main part of this theorem on the existence of a coordinate chart was proved in~\cite{kirillov2018morse, kourliouros2019local}.
\par
Let us prove statement $(i)$ of this theorem. We need to prove the equality $\tilde{p}+\tilde{q}^2=p+q^2.$  In this case (see Figure \ref{levelsetsboundary}, (a)) the region $\{F\leq \varepsilon \}$ is diffeomorphic to a closed half ball provided that $\varepsilon>0$ is sufficiently small. Therefore, the area of this region 
$$
A_{F,\omega}(\varepsilon):=\int_{F\leq\varepsilon}\omega
$$
is well-defined. Let $(p,q)$ be a coordinate chart centered at $O$ such that 
$
F=\lambda(p^2+q)$ and $\omega =\diff p \wedge \diff q.
$ 
Then we can write an explicit formula for the function 
$
A_{F,\omega}(\varepsilon)\colon
$
$$
A_{F,\omega}(\varepsilon)
=\int_{-\sqrt{\lambda^{-1}(\varepsilon)}}^{\sqrt{\lambda^{-1}(\varepsilon)}} (\lambda^{-1}(\varepsilon) - p^2)\diff p
=\frac{4}{3}\lambda^{-1}(\varepsilon)^{3/2}.
$$
So we conclude that the function $\lambda$ (and thus the function $q+p^2$) is uniquely determined by the pair $(F,\omega).$
\par
Now let us prove statement $(ii)$ of this theorem.  Consider the second case where in some local chart centered at $O$ we have $F=\lambda(q-p^2)$ and 
$
\omega =\diff p \wedge \diff q.
$
Consider a smooth curve $\mathit{\ell} \subset M$ such that it is transversal to the right half of the parabola $\{q=p^2\}$ and these two curves intersect each other at exactly one point. 
In coordinates $(p,q)$ the curve $\mathit{\ell}$ can be described as a graph of some function $g\colon$
$$
\mathit{\ell}=\{(p,q)\colon q=g(p)\}.
$$
We fix a number $\varepsilon<0,$ and consider the region $R_{\varepsilon}$ (see Figure~\ref{fig:area_function}) bounded by the boundary curve
$
\partial M=\{q=0\},
$ 
the right half of the parabola 
$
\{q=p^2\}\cap\{p\geq 0\},
$ 
the right half of the parabola 
$
\{q=p^2+\lambda^{-1}(\varepsilon)\}\cap\{p\geq 0\},
$ 
and the curve $\mathit{\ell}.$
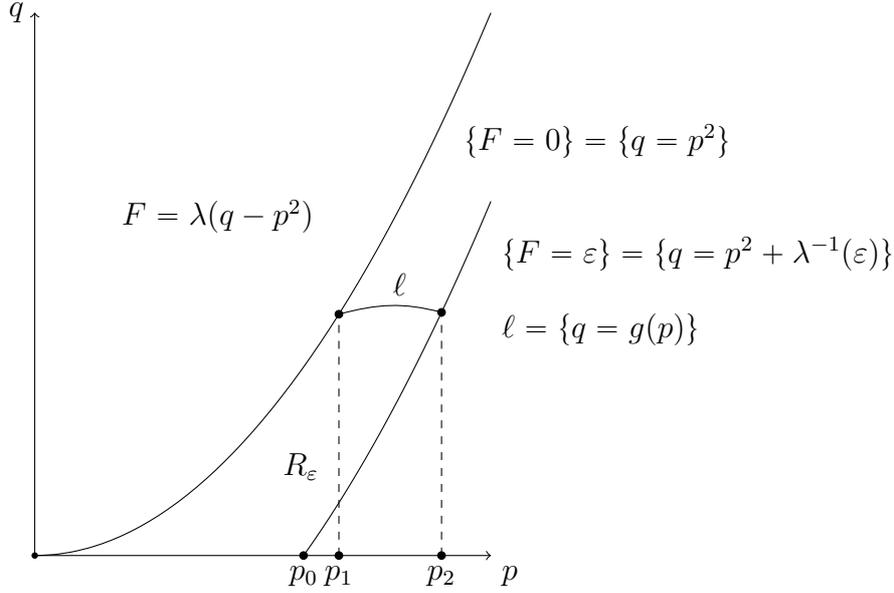
\begin{figure}[H]
    \centering
    \begin{tikzpicture}[scale=5]
	\filldraw (0,0) circle (0.2pt);
	\node [right] at (1.1,1.1) {$\{F=0\}=\{q=p^2\}$};
	\node [right] at (1.2,0.8) {$\{F=\varepsilon\}=\{q=p^2+\lambda^{-1}(\varepsilon)\}$};
	\node [right] at (1.2,0.6) {$\mathit{\ell}=\{q=g(p)\}$};
	\node [right] at (0.2,0.9) {$F=\lambda(q-p^2)$};
	\node [below] at (0.707,0) {$p_0$};
	\draw[fill] (0.707,0) circle [radius=0.01];
	\node [below] at (0.7,0.3) {$R_{\varepsilon}$};
	\node [below] at (0.8,0) {$p_1$};
	\draw[fill] (0.8,0) circle [radius=0.01];
	\node [below] at (1.07,0) {$p_2$};
	\draw[fill] (1.07,0) circle [radius=0.01];
	\node [above] at (0.96,0.66) {$\mathit{\ell}$};
	\draw[->] (0,0) -- (1.2,0) node[below right] {$p$};
	\draw[->] (0,0) -- (0,1.2*1.2) node[left] {$q$};
	\draw[domain=0:1.2, smooth, variable=\y] plot ({\y},{\y*\y});
	\draw[domain={sqrt(0.5)}:1.2, smooth, variable=\y] plot ({\y},{\y*\y-0.5});
	\draw (0.8,0.8*0.8) to [out=15,in=165] (1.07,1.07*1.07-0.5);
	\draw [dashed] (0.8,0) -- (0.8,0.8*0.8);
	\draw[fill] (0.8,0.8*0.8) circle [radius=0.01];
	\draw [dashed] (1.07,0) -- (1.07,1.07*1.07-0.5);
	\draw[fill] (1.07,1.07*1.07-0.5) circle [radius=0.01];
	\end{tikzpicture}
\caption{The area function 
$
A_{F,\omega,\mathit{\ell}}=
\int_{R_{\varepsilon}}\omega.
$
}
\label{fig:area_function}
\end{figure}
Then the area of this region 
$$
A_{F,\omega,\mathit{\ell}}(\varepsilon):=\int_{R_{\varepsilon}}\omega
$$
is well-defined. Denote by $p_0$ the $p$-coordinate of the intersection $\{F=\varepsilon\}\cap \partial M,$ by $p_1$ the $p$-coordinate of the intersection $\{F=0\}\cap \mathit{\ell},$
and by $p_2$ the $p$-coordinate of the intersection $\{F=\varepsilon\}\cap \mathit{\ell}.$ 
The coordinates $p_0,$ $p_1,$ and $p_2$ depend on $\mathit{\ell},$ and also the coordinates $p_0$ and $p_2$ depend on $\varepsilon.$ 
It follows from the implicit function theorem that the coordinate $p_2$ is a smooth function of $\varepsilon.$ As for $p_0,$ it is explicitly given by $\sqrt{-\lambda^{-1}(\varepsilon)}.$ Note that 
$
p_0(\varepsilon)<p_1<p_2(\varepsilon)
$ 
provided that $\abs{\varepsilon}$ is sufficiently small.
We have the following formula for the function 
$
A_{F,\omega,\mathit{\ell}}(\varepsilon)\colon
$
\begin{equation}
\begin{aligned}
& A_{F,\omega,\mathit{\ell}}(\varepsilon)=
\int_0^{p_0(\varepsilon)}p^2\diff p-\int_{p_0(\varepsilon)}^{p_1}\lambda^{-1}(\varepsilon)\diff p+\int_{p_1}^{p_2(\varepsilon)}(g(p)-p^2-\lambda^{-1}(\varepsilon))\diff p\\
& =\,p_0^3(\varepsilon)/3-p_0(\varepsilon)\lambda^{-1}(\varepsilon)+\mbox{smooth function of }\varepsilon\\
& =\,\frac{4}{3}(-\lambda^{-1}(\varepsilon))^{3/2}+\mbox{smooth function of }\varepsilon.
\end{aligned}
\end{equation}
Now consider some other chart $(\tilde{p},\tilde{q})$ such that $F=\tilde{\lambda}(q-p^2),$ 
$\omega=\diff\tilde{p}\wedge\diff \tilde{q},$ and the boundary $\partial M$ is given by $\{\tilde{q}=0\}.$ Then it is follows from above that
\begin{equation}
\begin{aligned}\label{eqn_lambda_diff}
\frac{4}{3}(-\lambda^{-1}(\varepsilon))^{3/2}-\frac{4}{3}(-\tilde{\lambda}^{-1}(\varepsilon))^{3/2}=f(\varepsilon)
\end{aligned}
\end{equation}
where $f$ is a smooth function of one variable. We want to prove that the Taylor series $J^\infty_0\lambda$ is equal to the Taylor series of $J^\infty_0\tilde{\lambda}.$ It follows from Lemma~\ref{lemma_sqrt_diff} that the Taylor series 
$
J^\infty_0\lambda^{-1}(\varepsilon)^3
$
is equal to the Taylor series 
$
J^\infty_0\tilde{\lambda}^{-1}(\varepsilon)^3.
$
From here we conclude that 
$
J^\infty_0\lambda=J^\infty_0\tilde{\lambda}.
$
\par
It remains to prove the last part of statement $(ii)$. Let $\psi\colon\R\to\R$ be a function of one variable flat at the origin.
The goal is to find a symplectomorphism $\Phi$ defined in some neighbourhood of the origin such that $O$ is a fixed point for $\Phi,$ the symplectomorphism $\Phi$ preserves the boundary $\partial M,$ and
$$
\Phi^*[q+\psi(q)]=q.
$$
For this part of the proof we are going to use a different coordinate chart (see Figure \ref{parabolic}):
$$
(P:=p,Q:=q-p^2).
$$
First of all, notice that after this \say{parabolic} change of coordinates the symplectic form still has the standard form:
$$
\diff P \wedge \diff Q = \diff p \wedge \diff q.
$$
Secondly, in the chart $(P,Q) $the function $F$ \say{straightens} i.e. $F(P,Q)=Q+\psi(Q),$ 
and the boundary becomes a parabola $\partial M=\{(P+Q^2=0\}.$ 
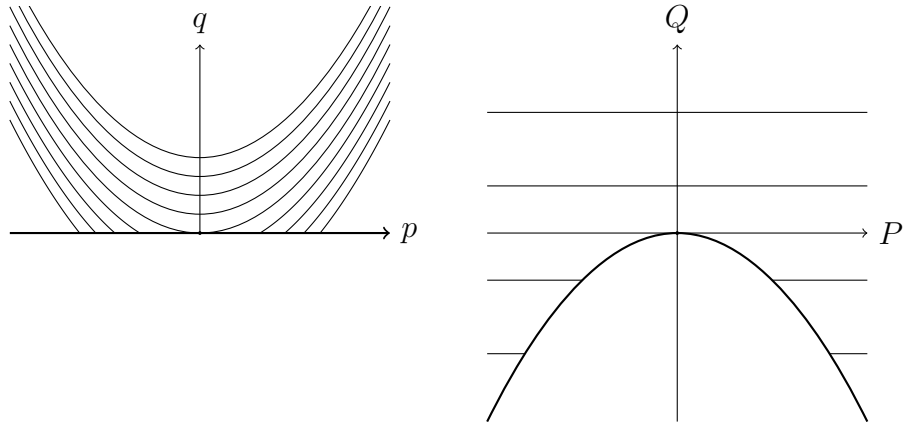
\begin{figure}[H]
\centering
\subfigure[Chart $(p,q)$]{
	\begin{tikzpicture}[scale=2.5]
	\filldraw (0,0) circle (0.2pt);
	\clip (-1.2,-1.2) rectangle (1.2,1.2);
	\draw[->, thick] (-1,0) -- (1,0) node[right] {$p$};
	\draw[->] (0,0) -- (0,1) node[above] {$q$};
	\foreach \x in {0.4, 0.3, 0.2, 0.1, 0.0}
		\draw  (-1,\x+1) parabola bend (0,\x) (1,\x+1);
	\foreach \x in {0.4, 0.3, 0.2, 0.1}
		\draw[domain={sqrt(\x)}:1, smooth, variable=\y] plot ({\y},{\y*\y-\x});
	\foreach \x in {0.4, 0.3, 0.2, 0.1}
		\draw[domain=-1:{-sqrt(\x)}, smooth, variable=\z] plot ({\z},{\z*\z-\x});
	\end{tikzpicture}}
\subfigure[Chart $(P,Q)$]{
	\begin{tikzpicture}[scale=2.5]
	\filldraw (0,0) circle (0.2pt);
	\clip (-1.2,-1.2) rectangle (1.2,1.3);
	\draw[->] (-1,0) -- (1,0) node[right] {$P$};
	\draw[->] (0,-1) -- (0,1) node[above] {$Q$};
	\draw[thick]  (-1,-1) parabola bend (0,0) (1,-1);
	\draw (-1,0.64) -- (1,0.64);
	\draw (-1,0.25) -- (1,0.25);
	\draw (-1,-0.25) -- (-0.5,-0.25);
	\draw (-1,-0.64) -- (-0.8,-0.64);
	\draw (1,-0.25) -- (0.5,-0.25);
	\draw (1,-0.64) -- (0.8,-0.64);
	\end{tikzpicture}}	
\caption{The parabolic change of coordinates}
\label{parabolic}
\end{figure}
Now let us proceed with the proof. Apply Moser's path method and consider the family of functions 
$$
f^t:=Q+t\psi(Q)
$$
for each $t\in[0,1].$ Instead of looking for one symplectomorphism $\Phi$, we will be looking for a family of Hamiltonian symplectomorphisms $\Phi^t$ such that 
\begin{align}\label{homotopyEqn}
\Phi^{t*}f^t=Q, 
\end{align}
$\Phi^{t}(\partial M)\subset \partial M$ for each $t\in[0,1],$ and $\Phi^{t}(O)=O$ for each $t\in[0,1].$ Let $v^t$ be the vector field corresponding to the flow $\Phi^t:$
$$
\frac{d}{d t}\Phi^t=v^t\circ\Phi^t.
$$
Differentiating \eqref{homotopyEqn} with respect to $t,$ we obtain the following differential equation
$$
\Phi^{t*}L_{v^t}f^t+\Phi^{t*}\frac{df^t}{dt}=0,
$$
which we rewrite as
$$
\Phi^{t*}\left(L_{v^t}f^t+\frac{df^t}{dt}\right)=0.
$$
Since $\Phi^t$ is a diffeomorphism, it is equivalent to   
\begin{align}\label{Dif_Eqn1}
L_{v^t}f^t+\psi(Q)=0.
\end{align}
Since the flow of the field $v^t$ has to preserve the symplectic structure $\omega,$ we will be looking for the field $v^t$ in the Hamiltonian form 
\begin{align}\label{HamiltonianForm}
v^t=H_Q^t\frac{\partial}{\partial P}-H_P^t\frac{\partial}{\partial Q}
\end{align}
where $H_Q^t:=\frac{\partial H^t}{\partial Q}$ and $H_P^t:=\frac{\partial H^t}{\partial P}.$
Substitute the right-hand side of \eqref{HamiltonianForm} into \eqref{Dif_Eqn1} to obtain the following partial differential equation
$$
\psi(Q)-H^t_P(1+t\dot{\psi}(Q))=0.
$$
Rewrite it as
$$
H^t_P=-\frac{\psi(Q)}{1+t\dot{\psi}(Q)}.
$$
Consider the family of functions 
$$
\psi^t(x):=\frac{\psi(x)}{1+t\dot{\psi}(x)}
$$
for each $t\in[0,1].$ We have $\dot{\psi}(0)=0$ so the denominator $1+t\dot{\psi}(x)$ is non-zero
in sufficiently small neighbourhood of the origin. Then our equation assumes the form 
\begin{align}\label{Dif_Eqn4_}
H^t_P=\psi^t(Q).
\end{align} 
It is clear that the general solution to this equation has the form 
$$
H^t(P,Q)=P\psi^t(Q)+g^t(Q)
$$ 
where $g^t$ is a smooth function of one variable. Our goal is to find a particular solution to \eqref{Dif_Eqn4_} that is constant along the boundary $\partial M=\{Q+P^2=0\}.$ That implies the following condition on the function $g^t\colon$
$$
P\psi^t(-P^2)+g^t(-P^2)=0.
$$
Hence, for any non-positive $x\in\R$ we have
$$
g^t(x)=-\sqrt{-x}\psi^t(x).
$$
Now define a function $g^t$ in the following way:
\[ \begin{cases} 
      g^t(x)=-\sqrt{-x}\psi^t(x) & x\leq 0 \\
      g^t(x)=0 & x>0
   \end{cases}
\]
It follows from the flatness of $\psi^t(\cdot)$ that the function $g^t$ defined as above is a smooth function flat at the origin. Now define the function $H$ to be the corresponding solution:
$$
H^t(P,Q)=P\psi^t(Q)+g^t(Q)
$$
The family of symplectomorphisms $\Phi^t$ can be recovered as the flow of the corresponding field 
$
v^t=H_Q^t\frac{\partial}{\partial P}-H_P^t\frac{\partial}{\partial Q}.
$ 
The condition 
$H^t|_{\partial M}=0$ 
implies that the field $v^t$ has a zero restriction on the boundary 
$\partial M = \{Q+P^2=0\},$ 
and we conclude that the corresponding family $\Phi^t$ preserves the boundary, and $\Phi^t(O)=O$ for each $t\in[0,1].$
Now applying the theorem on the smooth dependence of the flow on initial data one can conclude that the flow $\Phi^t$ is well-defined for $t\in[0,1].$ Hence, the diffeomorphism $\Phi^1$ has the desired properties. 
\end{proof}
In the next section we are going to use these local results to obtain a global classification of simple Morse functions with respect to the action of the group $\SDiff(M)$ of symplectomorphisms of $M.$
\section{Global classification of functions on symplectic surfaces}
\label{section:global}
\subsection{The Reeb graph of a function}
Throughout this section, let $M$ be a compact connected oriented surface with boundary $\partial M,$ and let $F \colon M\to \R$ be a simple Morse function on $M$. In what follows, by a level we mean a connected component of level sets of $F.$ Non-critical levels are diffeomorphic to a circle or a line segment. The surface $M$ can be considered as a union of levels, and we get a foliation with singularities. 
The base space of this foliation with the quotient topology is homeomorphic to a finite connected graph $\Gamma_F$ (see Figure~\ref{fig:example_reeb}) whose vertices correspond to critical values of $F$ or $F|_{\partial M}.$ We view this graph as a topological object (rather than combinatorial). This graph $\Gamma_F$ is called the \emph{Reeb graph}\footnote{This graph is also called the Kronrod graph of a function, see~\cite{Reeb, Kronrode}.
}
of the function $F.$ By $\pi$ we denote the projection $M\to \Gamma_F.$
We denote the edges of the Reeb graph by \emph{solid} lines if they correspond to circle components and by \emph{dashed} lines if they correspond to segment components. We denote the union of solid (respectively, dashed) edges in $\Gamma_F$ by $\Gamma_F^s$ and $\Gamma_F^d,$ respectively. We denote the preimages $\pi^{-1}(\Gamma^s_F)$ and $\pi^{-1}(\Gamma^d_F)$ by $M^s_F$ and $M^d_F.$ Thus $\Gamma_F = \Gamma_F^s \cup \Gamma_F^d,$ and $M = M^s_F \cup M^d_F.$ There are 7 possible types of vertices in the graph $\Gamma_F$ (see Table~\ref{table:types}). The function $F$ on $M$ descends to a function $f$ on the Reeb graph $\Gamma_F$. It is also convenient
to assume that $\Gamma_F$ is oriented: edges are oriented in the direction of increasing $f.$
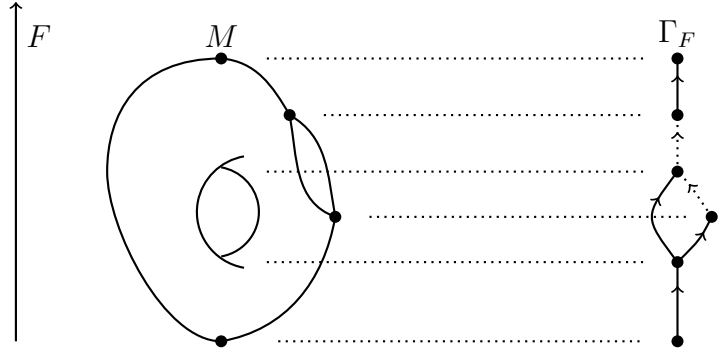
\begin{figure}[H]
\centering
{
\begin{tikzpicture}[thick, scale=1.5]
\node [vertex] at (1,2) (nodeDi) {};
\node [vertex] at (1.6,1.5) (nodeDDi) {};
\node [vertex] at (1,-0.5) (nodeAi) {};
\node [vertex] at (2, 0.6) (nodeBBi) {};
\fill (nodeAi) circle [radius=1.5pt];
\fill (nodeBBi) circle [radius=1.5pt];
\fill (nodeDi) circle [radius=1.5pt];
\fill (nodeDDi) circle [radius=1.5pt];
\draw (nodeDi) to [out=-5, in=120] (nodeDDi);
\draw (nodeDDi) to [out=-80, in=160] (nodeBBi);
\draw (nodeDDi) to [out=-30, in=100] (nodeBBi);
\draw (1,-0.5) to [out=10, in=-100] (nodeBBi);
\draw (0,1) .. controls (0,1.5) and (0.25,2) .. (1,2)
(nodeAi) .. controls (0.5,-0.5) and (0,0.5) .. (0,1);
\draw   (1.2,0.15) arc (260:100:0.5cm);
\draw   (1,0.25) arc (-80:80:0.4cm);
\node [vertex] at (5,-0.5) (nodeA) {$A$};
\node [vertex] at (5,0.2) (nodeB) {B};
\node [vertex] at (5.3,0.6) (nodeBB) {BB};
\draw  [->-] (nodeA) -- (nodeB);
\node [vertex] at (5,1) (nodeC) {C};
\draw  [->-] (nodeB) .. controls +(-0.3,+0.4) .. (nodeC);
\draw  [->-] (nodeB) .. controls +(0.2,+0.2) .. (nodeBB);
\draw  [dotted, ->-] (nodeBB) .. controls +(-0.2,+0.3) .. (nodeC);
\node [vertex] at (5,2) (nodeD) {D};
\node [vertex] at (5,1.5) (nodeDD) {DD};
\draw  [dotted, ->-] (nodeC) -- (nodeDD);
\draw  [->-] (nodeDD) -- (nodeD);
\fill (nodeA) circle [radius=1.5pt];
\fill (nodeB) circle [radius=1.5pt];
\fill (nodeBB) circle [radius=1.5pt];
\fill (nodeC) circle [radius=1.5pt];
\fill (nodeD) circle [radius=1.5pt];
\fill (nodeDD) circle [radius=1.5pt];
\draw  [->] (-0.8,-0.5) -- (-0.8,2.5);
\node at (-0.6, 2.2) (nodeA) {$F$};
\draw  [dotted] (1.5,-0.5) -- (4.7,-0.5);
\draw  [dotted] (1.4,0.2) -- (4.7,0.2);
\draw  [dotted] (2.3, 0.6) -- (5.1, 0.6);
\draw  [dotted] (1.4,1) -- (4.7,1);
\draw  [dotted] (1.9,1.5) -- (4.7,1.5);
\draw  [dotted] (1.4,2) -- (4.7,2);
\draw (5,2) node[above] {$\Gamma_F$};
\draw (1,2) node [above] {$M$};
\end{tikzpicture}} 
\caption{A torus with one hole with the height function on it and the corresponding Reeb graph}\label{fig:example_reeb}
\end{figure}
\par
Let $v$ be a vertex of the Reeb graph $\Gamma_F.$ 
Let us fix a number $\varepsilon>0$ such that 
$$
f^{-1}([f(v)-\varepsilon,f(v)+\varepsilon])\cap e
$$
is a proper subset of $e$ for each edge $e$ incident to $v.$ Consider the preimage 
$
P_v^\varepsilon:=\pi^{-1}(f^{-1}([f(v)-\varepsilon,f(v)+\varepsilon]))\subset M. 
$
The boundary $\partial \pi^{-1}(P^\varepsilon_v)$ is a piecewise smooth closed oriented curve. This curve is connected in the case where the vertex $v$ is incident only to dashed edges, and its image $\pi[\partial \pi^{-1}(P^\varepsilon_v)]$ is a closed oriented curve that passes edges incident to the vertex $v$ in a certain cyclic order. This construction is nontrivial only in the case when there are at least three dashed edges incident to the vertex $v$ (otherwise, there is only one cyclic order at the set of edges incident to $v$). Thus for an arbitrary II-vertex or IV-vertex (see Table~\ref{table:types}) of the graph $\Gamma_F$ we have a natural cyclic order for the edges incident to this vertex. The above properties of the graph $\Gamma_F$ make it natural to introduce the
following definition of an abstract Reeb graph.
\begin{definition}\label{def:Reeb_graph}
    An (abstract) Reeb graph $(\Gamma, f)$ is an oriented connected graph $\Gamma$ with solid or dashed edges, and a continuous function $f:\Gamma \to \R,$ with the following properties and additional data:
\begin{enumerate}[label=(\roman*)]
\item Each vertex of $\Gamma$ is of one of the 7 types from Table~\ref{table:types}.
\item There is a cyclic order on the set of edges incident to II- or IV-vertices (see Table~\ref{table:types})
\item The function $f$ is strictly monotonic on each edge of $\Gamma$, and the edges of $\Gamma$ are oriented towards the direction of increasing $f$.
\end{enumerate}
\end{definition}
\begin{definition}
Abstract Reeb graphs $(\Gamma_1,f)$ and $(\Gamma_2,g)$ are said to be \emph{equivalent} by means of the isomorphism $\phi\colon\Gamma_1\to\Gamma_2$ if the map $\phi:$
\begin{enumerate}[label=(\roman*)]
\item maps solid (respectively, dashed) edges to solid (respectively, dashed) edges;
\item preserves the cyclic order on the set of edges incident to each $II$- or $IV$-vertex, i.e. if $e_2$ follows $e_1$ in the cyclic order, then $\phi(e_2)$ follows $\phi(e_1);$
\item takes the function $g$ to the function $f$ (i.e. $f=g\circ \phi$).
\end{enumerate}
\end{definition}
\subsection{Recovering the topology of a surface from the Reeb graph}
In this subsection we follow~\cite[Section 5]{hladysh2019simple}. Let $M$ be a compact connected oriented surface with the boundary $\partial M,$ and let $F \colon M\to \R$ be a simple Morse function. The restriction of the projection $\pi$ to each boundary component of $M$ is a closed curve (a map from a circle to the graph) in the graph $\Gamma_F.$ Informally speaking, the following definition describes those closed curves for an abstract Reeb graph.
\begin{definition}\label{def:boundary_cycle}
Let $(\Gamma, f)$ be an abstract Reeb graph. A non-empty sequence of edges $(e_1,e_2,\dots,e_n)$ together with a sequence $(v_1,v_2,\dots,v_n,v_{n+1}=v_1)$ of vertices is called a \emph{boundary cycle} if the following three conditions hold:
\begin{enumerate}[label=(\roman*)]
    \item All edges in the sequence are dashed.
    \item Each edge $e_i$ is incident to the vertices $v_i$ and $v_{i+1}$ for every $i \in \{1,\dots, n\}.$
    \item If the vertex $v_{i}$ has three or more adjacent dashed edges, then the pair ($e_{i-1},e_i)$ of consecutive edges is also a consecutive pair of edges with respect to the cyclic order on the set of edges incident to the vertex $v_{i}$ for every $i \in \{1,\dots, n\}.$
\end{enumerate}
We call two boundary cycles equivalent if they differ by the action of a cyclic group, i.e. the sets of vertices
$
v_1\dots v_n v_1
$
and
$
v_i\dots v_n v_1 \dots v_{i-1}v_i
$
define the same topological cycle for each $i\in\{1,\dots,n\}.$ In addition, in the case when a boundary cycle consists only of $1$ or $2$-valent vertices (i.e. of vertices of type III and IV) we also call two boundary cycles 
$
v_1 v_2 \dots v_{n-1} v_n v_1
$
and 
$
v_n v_{n-1}\dots v_2 v_1 v_n
$
equivalent. We denote by $\sigma(\Gamma)$ the number of (equivalence classes of) boundary cycles in $\Gamma.$ 
\end{definition}
\begin{example}
Consider a disk with holes and a torus with one hole, and consider the height function on them (as shown in Figure~\ref{fig:dashed_Reeb}). The corresponding Reeb graphs are identical except for the cyclic orders at the vertices $C_1$ and $C_2.$ In case $(a)$ of a disk with with holes there are three boundary cycles: $B_1C_1D_1B_1,$ $C_1E_1D_1C_1,$ and $A_1 B_1 D_1 E_1 F_1 E_1 C_1 B_1 A_1.$ In case $(b)$ of a torus with one hole there is only one boundary cycle: $A_2 B_2 D_2 E_2 F_2 E_2 C_2 D_2 B_2 C_2 E_2 D_2 C_2 D_2 A_2.$ 
\end{example}
\begin{proposition}
[{\cite[page 12]{hladysh2019simple}}]\label{prop:boundary_cycle}
Let $M$ be a compact connected oriented surface with the boundary $\partial M,$ and let $F \colon M\to \R$ be a simple Morse function.
Then the number of boundary cycles $\sigma(\Gamma_F)$ is equal to the number of boundary components $\dim H_0(\partial M)$ of the surface $M.$    
\end{proposition}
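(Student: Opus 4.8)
The plan is to set up an explicit bijection, induced by the projection $\pi$, between the connected components of $\partial M$ and the equivalence classes of boundary cycles of $\Gamma_F$; the equality $\sigma(\Gamma_F)=\dim H_0(\partial M)$ then follows by counting. Since $F$ is simple Morse, the restriction $F|_{\partial M}$ is Morse, so each component $C$ of $\partial M$ is a circle on which $F$ has finitely many critical points; these cut $C$ into arcs of strict monotonicity. Every interior point of such an arc is a regular point of $F$ lying on a segment-type level, so $\pi$ maps the arc into the dashed part $\Gamma_F^{d}$; as we move along the arc the dashed edge being traversed changes only when we cross a vertex of $\Gamma_F$ at a critical value of $F$ or of $F|_{\partial M}$. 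Recording, in the order dictated by the boundary orientation induced from $M$, the successive dashed edges $e_1,\dots,e_n$ met by $\pi|_C$ together with the vertices $v_1,\dots,v_n$ between them, I obtain a candidate boundary cycle and set $\Phi(C)$ to be its equivalence class.

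\emph{$\Phi(C)$ is a boundary cycle.} Conditions (i) and (ii) of Definition~\ref{def:boundary_cycle} are immediate: the $e_i$ are dashed because $\partial M$ meets only segment levels, and the incidences hold by continuity of $\pi|_C$. The content is condition (iii), at the vertices incident to three or more dashed edges (those carrying a nontrivial cyclic order, i.e.\ the II- and IV-vertices of Table~\ref{table:types}). Here I would use the local normal forms: such a vertex comes either from the boundary model $S=q-p^2$ of Theorem~\ref{theorem:md_boundary} or from an interior saddle model $S=pq$ of Theorem~\ref{theorem:md} at which segment levels reconnect. The decisive remark is that the cyclic order was \emph{defined} as the order in which the closed curve $\pi[\partial P_v^\varepsilon]$ visits the incident edges, and that $\partial P_v^\varepsilon$ is assembled from arcs of $\partial M$ alternating with short arcs of the levels $f(v)\pm\varepsilon$. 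The part of $C$ lying in $P_v^\varepsilon$ is a union of some of these boundary arcs, and each boundary arc joins one incident edge directly to the next edge met along $\partial P_v^\varepsilon$, with no edge-end in between; hence the pair $(e_{i-1},e_i)$ used by $C$ at $v$ is a cyclically consecutive pair. I expect this step --- verifying, case by case over the vertex types of Table~\ref{table:types}, that the local pattern in which $\partial M$ connects the incident edges is exactly the pattern of consecutive pairs of the defined cyclic order --- to be the main obstacle, since it is the only point where the two-dimensional geometry and the orientation of $M$ genuinely enter.

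\emph{$\Phi$ is a bijection.} For injectivity, distinct components of $\partial M$ are disjoint and so give disjoint families of boundary arcs, hence distinct cyclic sequences; the cyclic-rotation equivalence absorbs the choice of base vertex, and the reversal equivalence imposed for cycles through only at-most-$2$-valent vertices absorbs the one remaining ambiguity (a choice of traversal direction, in the absence of any cyclic order to fix it), so no two components are identified and none is counted twice. For surjectivity, given an abstract boundary cycle I lift it arc by arc: starting on a boundary arc over $e_1$ and, at each vertex, invoking condition (iii) together with the geometric meaning of the cyclic order established above, I continue uniquely onto a boundary arc over the next edge; after $n$ steps the lift closes up into a circle in $\partial M$, i.e.\ a boundary component whose $\Phi$-image is the given cycle. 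Therefore $\Phi$ is a bijection, and $\sigma(\Gamma_F)$ equals the number of connected components of $\partial M$, namely $\dim H_0(\partial M)$.
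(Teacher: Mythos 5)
The paper itself offers no proof of this proposition: it is imported, with a citation, from \cite[Section 5]{hladysh2019simple}, so there is no in-paper argument to measure yours against. On its own terms, your strategy --- the bijection induced by $\pi$ between components of $\partial M$ and equivalence classes of boundary cycles --- is the natural (and surely the intended) one, and your verification of condition (iii) of Definition~\ref{def:boundary_cycle} is correct and well targeted: since $\partial P_v^{\varepsilon}$ alternates between level arcs (each projecting to a single point of a single edge) and arcs of $\partial M$ (each projecting to a path $e\to v\to e'$), the consecutive pairs of the cyclic order are exactly the pairs of incident edges joined by boundary arcs through $v$, which is what (iii) asks of the pairs $(e_{i-1},e_i)$ produced by a boundary component.

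The genuine gap is in injectivity (and it propagates into surjectivity). ``Distinct components give disjoint families of boundary arcs, hence distinct cyclic sequences'' is a non sequitur: a boundary cycle is a sequence of edges and vertices of $\Gamma_F$, not of arcs in $M$, and each dashed edge $e$ carries exactly \emph{two} disjoint lifts to $\partial M$ (the two endpoint families of the segment levels over $e$), so a priori two different boundary components could traverse the same cyclic sequence of edges using complementary lifts, and would then be identified by $\Phi$. Ruling this out requires an argument you do not give. The one that works is a uniqueness-of-lift statement split into two cases. First, any boundary cycle all of whose vertices are $2$-valent in $\Gamma_F^{d}$ would force $f$ to be monotone around a circle (types I, III, V have at most one incoming and one outgoing dashed edge), which is impossible; hence every boundary cycle passes through a vertex that is either $1$-valent in $\Gamma_F^{d}$ or has $\ge 3$ incident dashed edges (type II or IV). At a $1$-valent vertex the two lifts of the unique incident dashed edge are joined into a single arc of $\partial M$, so both lie on the same component; at a $\ge 3$-valent vertex exactly one of the two lifts of $e_{i-1}$ continues along $\partial M$ to $e_i$ when $(e_{i-1},e_i)$ is consecutive for the cyclic order (the other continues to the cyclically preceding edge), so the directed cyclic order --- coherently induced by the orientation of $M$ on all the curves $\partial P_v^{\varepsilon}$ and on $\partial M$ --- pins down the lift. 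Only with this case analysis do injectivity and the closing-up of your arc-by-arc lift actually follow; as written, both are asserted rather than proved, and your remark that the reversal equivalence ``absorbs the remaining ambiguity'' conflates the choice of traversal direction with the choice of lift, which are different two-fold ambiguities.
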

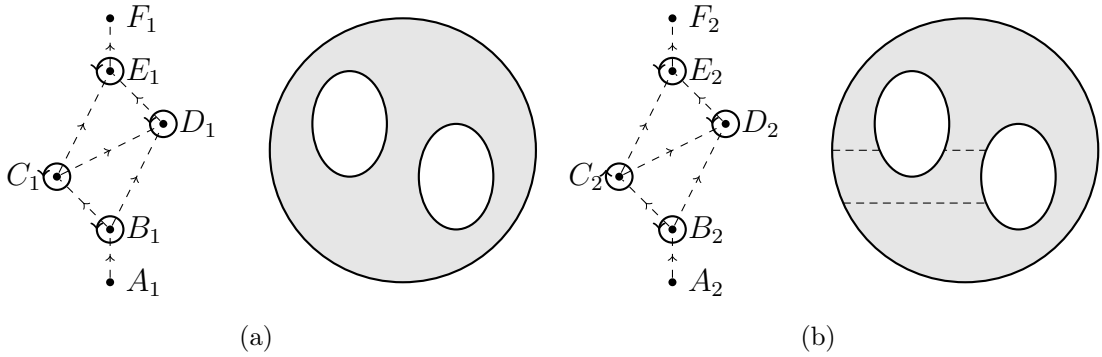
\begin{figure}
\centering
\subfigure[]{
\begin{tikzpicture}[scale = 0.7]
    \tikzstyle{subj} = [circle, minimum width=3pt, fill, inner sep=0pt]
    \node[subj, label=right:$A_1$] (nodeA) at (5.5,0) {};
    \node[subj, label=right:$B_1$] (nodeB) at (5.5,1) {};
    \node[subj, label=left:$C_1$] (nodeC) at (4.5,2) {};
    \node[subj, label=right:$D_1$] (nodeD) at (6.5,3) {};
    \node[subj, label=right:$E_1$] (nodeE) at (5.5,4) {};
    \node[subj, label=right:$F_1$] (nodeF) at (5.5,5) {};
    \draw[thick, decoration={markings, mark=at position 0.5 with {\arrow{>}}}, postaction={decorate}]
    (nodeB) circle (0.25);
    \draw[thick, decoration={markings, mark=at position 0.5 with {\arrow{>}}}, postaction={decorate}]
    (nodeC) circle (0.25);
    \draw[thick, decoration={markings, mark=at position 0.5 with {\arrow{>}}}, postaction={decorate}]
    (nodeD) circle (0.25);
    \draw[thick, decoration={markings, mark=at position 0.5 with {\arrow{>}}}, postaction={decorate}]
    (nodeE) circle (0.25);
    \draw[dashed, 
    decoration={markings, mark=at position 0.5 with {\arrow{>}}},
    postaction={decorate}]
    (nodeA) -- (nodeB);
    \draw[dashed, 
    decoration={markings, mark=at position 0.5 with {\arrow{>}}},
    postaction={decorate}]
    (nodeB) -- (nodeC);
    \draw[dashed, 
    decoration={markings, mark=at position 0.5 with {\arrow{>}}},
    postaction={decorate}] 
    (nodeB) -- (nodeD);
    \draw[dashed, 
    decoration={markings, mark=at position 0.5 with {\arrow{>}}},
    postaction={decorate}] 
    (nodeC) -- (nodeE);
    \draw[dashed, 
    decoration={markings, mark=at position 0.5 with {\arrow{>}}},
    postaction={decorate}] 
    (nodeC) -- (nodeD);
    \draw[dashed, 
    decoration={markings, mark=at position 0.5 with {\arrow{>}}},
    postaction={decorate}] 
    (nodeD) -- (nodeE);
    \draw[dashed, 
    decoration={markings, mark=at position 0.5 with {\arrow{>}}},
    postaction={decorate}] 
    (nodeE) -- (nodeF);
    \fill [opacity = 0.1] (1+10, 2.5) ellipse (2.5 and 2.5);
    \draw [thick] (1+10, 2.5) ellipse (2.5 and 2.5);
    \fill [thick, fill = white, draw = black] (1+9, 3) ellipse (0.7 and 1);
    \fill [thick, fill = white, draw = black] (1+11, 2) ellipse (0.7 and 1);
\end{tikzpicture}}
\subfigure[]{
\begin{tikzpicture}[scale = 0.7]
    \tikzstyle{subj} = [circle, minimum width=3pt, fill, inner sep=0pt]
    \node[subj, label=right:$A_2$] (nodeA) at (5.5,0) {};
    \node[subj, label=right:$B_2$] (nodeB) at (5.5,1) {};
    \node[subj, label=left:$C_2$] (nodeC) at (4.5,2) {};
    \node[subj, label=right:$D_2$] (nodeD) at (6.5,3) {};
    \node[subj, label=right:$E_2$] (nodeE) at (5.5,4) {};
    \node[subj, label=right:$F_2$] (nodeF) at (5.5,5) {};
    \draw[thick, decoration={markings, mark=at position 0.5 with {\arrow{>}}}, postaction={decorate}]
    (nodeB) circle (0.25);
    \draw[thick, decoration={markings, mark=at position 0.5 with {\arrow{<}}}, postaction={decorate}]
    (nodeC) circle (0.25);
    \draw[thick, decoration={markings, mark=at position 0.5 with {\arrow{>}}}, postaction={decorate}]
    (nodeD) circle (0.25);
    \draw[thick, decoration={markings, mark=at position 0.5 with {\arrow{>}}}, postaction={decorate}]
    (nodeE) circle (0.25);
    \draw[dashed, 
    decoration={markings, mark=at position 0.5 with {\arrow{>}}},
    postaction={decorate}]
    (nodeA) -- (nodeB);
    \draw[dashed, 
    decoration={markings, mark=at position 0.5 with {\arrow{>}}},
    postaction={decorate}]
    (nodeB) -- (nodeC);
    \draw[dashed, 
    decoration={markings, mark=at position 0.5 with {\arrow{>}}},
    postaction={decorate}] 
    (nodeB) -- (nodeD);
    \draw[dashed, 
    decoration={markings, mark=at position 0.5 with {\arrow{>}}},
    postaction={decorate}] 
    (nodeC) -- (nodeE);
    \draw[dashed, 
    decoration={markings, mark=at position 0.5 with {\arrow{>}}},
    postaction={decorate}] 
    (nodeC) -- (nodeD);
    \draw[dashed, 
    decoration={markings, mark=at position 0.5 with {\arrow{>}}},
    postaction={decorate}] 
    (nodeD) -- (nodeE);
    \draw[dashed, 
    decoration={markings, mark=at position 0.5 with {\arrow{>}}},
    postaction={decorate}] 
    (nodeE) -- (nodeF);
    \fill [opacity = 0.1] (1+10, 2.5) ellipse (2.5 and 2.5);
    \draw [thick] (1+10, 2.5) ellipse (2.5 and 2.5);
    \draw [densely dashed] (1+7.5, 2.5) -- (1+11.5, 2.5);
    \draw [densely dashed] (1+7.7, 1.5) -- (1+11.5, 1.5);  
    \fill [thick, fill = white, draw = black] (1+9, 3) ellipse (0.7 and 1);
    \fill [thick, fill = white, draw = black] (1+11, 2) ellipse (0.7 and 1);
\end{tikzpicture}}
\caption{An illustration to Definition~\ref{def:boundary_cycle}: dashed Reeb graph with $\dim \Hom_1(\Gamma) = 2$ corresponding to both a disk with two holes $(a)$ and torus with one hole $(b).$ Cutting the disk drawn here along the three dashed levels and then restoring the gluings with opposite orientations, one obtains a torus with one hole. This figure is based on Figure~5 from~\cite{izosimov2017classification}.}\label{fig:dashed_Reeb}
\end{figure}
\begin{theorem}[{\cite[Theorem 5.3]{hladysh2019simple}}]\label{formula_for_total_genus}
The genus $g(M)$ of a surface $M$ is given by the following formula:
\begin{equation}
\label{formula:genus}
\begin{aligned}
&g(M)= -\chi(\Gamma^s)+\frac{-\chi(\Gamma^d)+5\dim \Hom_0(\Gamma^s\cap \Gamma^d)-\sigma(\Gamma_F)}{2}\\
&\,\,\,\,\,\,\,\,\,\,\,\,\,\,\,\,\,\,\,\,-\,\dim \Hom_0(\Gamma^s)-\dim \Hom_0(\Gamma^d)+3,
\end{aligned}
\end{equation}
where $\chi(\Gamma^s)$ is the Euler characteristic and $\sigma(\Gamma_F)$ is the number of boundary cycles.
\end{theorem}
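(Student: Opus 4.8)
The plan is to compute the Euler characteristic $\chi(M)$ directly from the Reeb graph and then convert it into the genus. For a compact connected oriented surface one has
\[
\chi(M) = 2 - 2g(M) - \dim \Hom_0(\partial M),
\]
so, rewriting $\dim \Hom_0(\partial M) = \sigma(\Gamma_F)$ by Proposition~\ref{prop:boundary_cycle}, the identity $g(M) = 1 - \tfrac12\sigma(\Gamma_F) - \tfrac12\chi(M)$ reduces everything to an expression for $\chi(M)$ in terms of the graph data.

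To compute $\chi(M)$ I would use additivity of the Euler characteristic applied to the decomposition $M = M^s_F \cup M^d_F$,
\[
\chi(M) = \chi(M^s_F) + \chi(M^d_F) - \chi(M^s_F \cap M^d_F),
\]
or, equivalently, the finer stratification of $M$ by the preimages of the open edges and of the vertices of $\Gamma_F$. The intersection $M^s_F \cap M^d_F = \pi^{-1}(\Gamma^s \cap \Gamma^d)$ is the union of the critical levels over the finitely many mixed vertices, one level over each point of $\Gamma^s\cap\Gamma^d$; its Euler characteristic is therefore a fixed multiple of $\dim \Hom_0(\Gamma^s\cap\Gamma^d)$, which is precisely the origin of that term in the statement. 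Over the interior of a solid edge the preimage is an annulus $S^1\times(0,1)$, contributing $0$, while over the interior of a dashed edge it is a band $[0,1]\times(0,1)$; summing the (compactly supported) Euler characteristics over open edges and using $\chi(\Gamma^s) = \#\{\text{solid vertices}\} - \#\{\text{solid edges}\}$ and its dashed analogue lets one express $\chi(M^s_F)$ and $\chi(M^d_F)$ through $\chi(\Gamma^s)$, $\chi(\Gamma^d)$, the component counts $\dim \Hom_0(\Gamma^s)$, $\dim \Hom_0(\Gamma^d)$, and the vertex tallies.

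The computational heart is the local contribution of each vertex, which I would handle by going through the seven vertex types of Table~\ref{table:types} and evaluating $\chi(\pi^{-1}(P_v^{\varepsilon}))$ for the elementary neighbourhood $P_v^{\varepsilon}$ of each type: interior extrema contribute disk caps, interior saddles contribute figure-eight (pair-of-pants) levels, boundary extrema of $F|_{\partial M}$ contribute half-disks, and the mixed vertices contribute the remaining models. Assembling these local Euler characteristics with the edge contributions above, regrouping the vertex- and edge-counts into $\chi(\Gamma^s)$, $\chi(\Gamma^d)$, $\dim \Hom_0(\Gamma^s)$, $\dim \Hom_0(\Gamma^d)$ and $\dim \Hom_0(\Gamma^s\cap\Gamma^d)$, and finally substituting into $g(M) = 1 - \tfrac12\sigma(\Gamma_F) - \tfrac12\chi(M)$ should produce formula~\eqref{formula:genus}.

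I expect the main obstacle to be the bookkeeping at the mixed vertices $\Gamma^s\cap\Gamma^d$: these are exactly the points where circle levels degenerate into segment levels, so their local model is the least symmetric, the solid and dashed parts of $\partial\pi^{-1}(P_v^{\varepsilon})$ genuinely interact there, and the coefficient $5$ in front of $\dim \Hom_0(\Gamma^s\cap\Gamma^d)$ is produced by precisely this interaction. A closely related subtlety is keeping the conventions for $\chi$ and $\dim \Hom_0$ of the subgraphs consistent when a connected component of $\Gamma^s$ or $\Gamma^d$ degenerates to a single vertex, and verifying that the boundary bookkeeping is compatible with Proposition~\ref{prop:boundary_cycle}, so that the $\sigma(\Gamma_F)$ contributions combine as claimed rather than leaving a spurious constant.
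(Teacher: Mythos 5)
The paper does not actually prove this statement: it is quoted verbatim from \cite[Theorem~5.3]{hladysh2019simple}, so there is no in-paper argument to compare yours against. Your strategy --- reduce $g(M)$ to $\chi(M)$ and $\sigma(\Gamma_F)$ via $\chi(M)=2-2g(M)-\dim\Hom_0(\partial M)$ and Proposition~\ref{prop:boundary_cycle}, then compute $\chi(M)$ by additivity of the compactly supported Euler characteristic over the stratification of $M$ by preimages of open edges and vertices of $\Gamma_F$ --- is the natural and correct route, and it is essentially self-contained given Table~\ref{table:types}. The problem is that your proposal stops exactly where the content begins: the ``computational heart'' you defer is the whole proof, and it is not routine bookkeeping, because carrying it out does not reproduce formula~\eqref{formula:genus} as printed.

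Concretely: open solid edges contribute $\chi_c=0$, open dashed edges contribute $\chi_c=-1$ each, and the critical levels over vertices of types I--VII contribute $\chi=1,1,0,1,0,-1,1$ respectively (point; arc; circle tangent to $\partial M$; four arcs from a saddle to $\partial M$; circle with two whiskers; figure eight; point). Writing $n_T$ for the number of type-$T$ vertices, the incidence counts give $2\chi(\Gamma^s)=n_{III}+n_V-n_{VI}+n_{VII}$ and $2\chi(\Gamma^d)=n_I-n_{II}+n_{III}-2n_{IV}$, and summing the strata yields
\begin{equation}
\chi(M)=2\chi(\Gamma^s)+\chi(\Gamma^d)-2\dim\Hom_0(\Gamma^s\cap\Gamma^d),
\qquad
g(M)=-\chi(\Gamma^s)+\frac{-\chi(\Gamma^d)+2\dim\Hom_0(\Gamma^s\cap\Gamma^d)-\sigma(\Gamma_F)}{2}+1.
\end{equation}
This checks out on all three examples in the paper (the torus with one hole of Figure~\ref{fig:example_reeb}, and both surfaces of Figure~\ref{fig:dashed_Reeb}), whereas \eqref{formula:genus} as printed does not: for Figure~\ref{fig:example_reeb} one has $\chi(\Gamma^s)=2$, $\chi(\Gamma^d)=1$, $\dim\Hom_0(\Gamma^s\cap\Gamma^d)=3$, $\dim\Hom_0(\Gamma^s)=2$, $\dim\Hom_0(\Gamma^d)=1$, $\sigma=1$, and \eqref{formula:genus} returns $9/2$ instead of $1$. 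So the coefficient $5$, the terms $-\dim\Hom_0(\Gamma^s)-\dim\Hom_0(\Gamma^d)$, and the constant $+3$ cannot all be right under the natural reading of the symbols; either the statement carries conventions from \cite{hladysh2019simple} that are not reproduced here, or it is mis-transcribed. You must therefore actually execute the local computation, and then either derive the displayed formula above or reconcile it with \eqref{formula:genus} --- including settling the degenerate conventions ($\chi(\emptyset)$, $\dim\Hom_0(\emptyset)$) you correctly flag as delicate. As a plan your proposal is sound; as a proof it is not yet one, and the step you postponed is precisely where the difficulty (and, apparently, an inconsistency in the stated formula) lives.
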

Theorem~\ref{formula_for_total_genus} motivates us to give the following definition. 
\begin{definition}\label{def:genus_graph}
Let $(\Gamma, f)$ be an abstract Reeb graph. Define the \emph{genus} $g(\Gamma)$ as the number from the right-hand side of the formula in Theorem~\ref{formula_for_total_genus}.
\end{definition}
\begin{table*}
{
\begin{tabular}{m{1cm}|m{4cm}|m{2cm}|m{8cm}|}
\centering{Type} & \centering{Level Sets} & \centering{Reeb Graph} & \centering{Asymptotics} \tabularnewline
\hline
\centering I 
& 
\centering
\begin{tikzpicture}[thick, scale = 1.33]
  \draw (-0.91, 0) -- (0.91, 0);
  \draw [dotted] (0.3, 0) .. controls (0.2, 0.3) and (-0.2, 0.3) .. (-0.3,0);
    \draw [dotted](0.6, 0) .. controls (0.4, 0.6) and (-0.4, 0.6) .. (-0.6,0);
      \draw [dotted] (0.9, 0) .. controls (0.6, 0.9) and (-0.6, 0.9) .. (-0.9,0);
        \fill [opacity = 0.1] (-0.9, 0) -- (0.9, 0)  .. controls (0.6, 0.9) and (-0.6, 0.9) .. (-0.9,0) ;
            \fill (0, 0) circle [radius=1.5pt];
      \end{tikzpicture}
&
\centering
\begin{tikzpicture}[thick, scale = 2]
      \draw[dashed, decoration={markings, mark=at position 0.5 with {\arrow{>}}}, postaction={decorate}](1.5, 0) -- (1.5, 0.5);%
       \fill (1.5, 0) circle [radius=1pt];
\end{tikzpicture}
&
$\mu([v,x])= \psi(  f(x) )\sqrt{|   f (x)|}, $
where $\psi(0) = 0,$ and $\dot{\psi}(0) \neq 0$.
\\
\hline
\centering II 
& 
\centering
\begin{tikzpicture}[thick, scale = 1.33]
  \draw [] (-0.64, 0) -- (0.64, 0);
                     \fill (0, 0) circle [radius=1.5pt];
            \begin{scope}
    \clip (-1.1,0) rectangle (1.1, 0.8);
      \draw [dotted] (0, 0.8) circle [radius = 0.8];
     \draw [dotted] (0, 0.8) circle [radius = 1];
       \fill [opacity = 0.1] (0, 0.8) circle [radius = 1];
              \fill [fill = white, draw = white] (0, 0.8) circle [radius = 0.6];
                       \draw [dotted] (0, 0.8) circle [radius = 0.6];
\end{scope}
  \draw [] (-1.01, 0.8) -- (-0.6, 0.8);
  \draw [] (1.01, 0.8) -- (0.6, 0.8);   
\end{tikzpicture}
& 
\centering
\begin{tikzpicture}[thick, scale = 2]
              \draw[dashed, decoration={markings, mark=at position 0.5 with {\arrow{>}}}, postaction={decorate}](1.2, 0) -- (1.5, 0.5) node[left, align=left] at  (1.35,0.25)
              {$e_0$};
                  \draw[dashed, decoration={markings, mark=at position 0.5 with {\arrow{>}}}, postaction={decorate}](1.8, 0) -- (1.5, 0.5) node[right, align=right] at (1.65, 0.25) {$e_1$};
         \draw[dashed, decoration={markings, mark=at position 0.5 with {\arrow{>}}}, postaction={decorate}](1.5, 0.5) -- (1.5, 1) node[right, align=right] at (1.5,0.7) {$e_2$};
       \fill (1.5, 0.5) circle [radius=1pt];
\end{tikzpicture}
& 
$\mu([v,x])= \,\eps_i\psi(  f(x) )\sqrt{|   f (x)|} + \eta_i(   f(x) ),$
where $\eps_0 = \eps_1 = -1,$ $\eps_2 =  2,$  $\psi(0) = 0,$ $\dot{\psi}(0) \neq 0,$
and $\eta_0 + \eta_1 + \eta_2 = 0$.
\\
\hline
\centering III
&
\centering
\begin{tikzpicture}[thick, scale = 2]
  \draw [] (-0.32, 0) -- (0.32, 0);
  \draw [dotted] (0, 0.4) circle [radius = 0.4];
          \fill (0, 0) circle [radius=1pt];
            \begin{scope}
    \clip (-0.6,0) rectangle (0.6, 1);
     \draw [dotted] (0, 0.4) circle [radius = 0.5];
       \fill [opacity = 0.1] (0, 0.4) circle [radius = 0.5];
              \fill [ fill = white] (0, 0.4) circle [radius = 0.3];
              \draw [dotted]  (0, 0.4) circle [radius = 0.3];;
\end{scope}
\end{tikzpicture}
&
\centering
\begin{tikzpicture}[thick, scale = 2]
     \draw[dashed, decoration={markings, mark=at position 0.5 with {\arrow{>}}}, postaction={decorate}](1.5, 0) -- (1.5, 0.5) node[right, align=right] at (1.5, 0.2) {$e_0$};
         \draw[decoration={markings, mark=at position 0.5 with {\arrow{>}}}, postaction={decorate}](1.5, 0.5) -- (1.5, 1) node[right, align=right] at (1.5, 0.7) {$e_1$};
       \fill (1.5, 0.5) circle [radius=1pt];
\end{tikzpicture}
& 
$\mu([v,x])= \,\eps_i\psi(  f(x) )\sqrt{|   f (x)|} + \eta_i(   f(x) ),$
where $\eps_0 = -1$, $\eps_1  = 0$,  $\psi(0) = 0$, $\dot{\psi}(0) \neq 0$,
and $\eta_0 + \eta_1 = 0$.
\\
\hline
\centering  IV
&
\centering
\begin{tikzpicture}[thick, scale = 1.33]
                     \fill (0, 0) circle [radius=1.5pt];
                        \begin{scope}                  
                         \clip (-0.65, -1) rectangle (0.65, 1);
                                              \begin{pgfinterruptboundingbox}
                     \draw [dotted] (0,0) .. controls (-1.5,1.5) and (-1.5,-1.5) .. (0,0);
                     \end{pgfinterruptboundingbox}
                         \begin{pgfinterruptboundingbox}
                                          \draw [dotted] (0,0) .. controls (1.5,1.5) and (1.5,-1.5) .. (0,0);
                                          \end{pgfinterruptboundingbox}
                     \draw [dotted] plot [smooth cycle] coordinates {(0, 0.2) (-0.65, 0.55) (-1.1, 0.4) (-1.25, 0) (-1.1, -0.4) (-0.65, -0.55) (0,-0.2) (0.65, -0.55) (1, -0.5) (1.2,0) (1, 0.5)(0.65, 0.55)};
                       \fill [opacity = 0.1] plot [smooth cycle] coordinates {(0, 0.2) (-0.65, 0.55) (-1.1, 0.4) (-1.25, 0) (-1.1, -0.4) (-0.65, -0.55) (0,-0.2) (0.65, -0.55) (1, -0.5) (1.2,0) (1, 0.5)(0.65, 0.55)};
                         \fill [fill = white] (0.65,0) ellipse (0.35 and 0.3);
                         \draw [dotted] (0.65,0) ellipse (0.35 and 0.3);;
                           \fill [fill = white] (-0.65,0) ellipse (0.35 and 0.3);
                            \draw [dotted] (-0.65,0) ellipse (0.35 and 0.3);;
                     \end{scope}                    
           \draw [] (0.65, 0.285) -- (0.65, 0.56);
                                   \draw [] (0.65, -0.285) -- (0.65, -0.56);
                               \draw [] (-0.65, 0.285) -- (-0.65, 0.56);
                                   \draw [] (-0.65, -0.285) -- (-0.65, -0.56);
\end{tikzpicture}
& 
\centering
\begin{tikzpicture}[thick, scale = 2]
              \draw [dashed, decoration={markings, mark=at position 0.5 with {\arrow{>}}}, postaction={decorate}](1.2, 0) -- (1.5, 0.5) node[left, align=left] at (1.35, 0.25) {$e_0$};
                  \draw [dashed, decoration={markings, mark=at position 0.5 with {\arrow{>}}}, postaction={decorate}](1.8, 0) -- (1.5, 0.5) node[right, align=right] at (1.65, 0.25) {$e_1$};
         \draw [dashed, decoration={markings, mark=at position 0.5 with {\arrow{>}}}, postaction={decorate}](1.5, 0.5) -- (1.2, 1) node[left, align=left] at (1.35, 0.75) {$e_3$};
             \draw[dashed, decoration={markings, mark=at position 0.5 with {\arrow{>}}}, postaction={decorate}](1.5, 0.5) -- (1.8, 1) node[right, align=right] at (1.65, 0.75) {$e_2$};
       \fill (1.5, 0.5) circle [radius=1pt];
\end{tikzpicture}
&
$\mu([v,x])= \,\eps_i\psi(  f(x) )\ln |   f (x)| + \eta_i(   f(x) ),$
where $\eps_0 = \eps_1= -1$, $\eps_2  = \eps_3 =  1$,  $\psi(0) = 0$, $\dot{\psi}(0) \neq 0$,
and $\eta_0 + \eta_1 + \eta_2 + \eta_3 = 0$.
\\
\hline
\centering V
& 
\begin{tikzpicture}[thick, scale = 1.33]
                     \fill (0, 0) circle [radius=1.5pt];
                     \begin{pgfinterruptboundingbox}
                     \draw [dotted] (0,0) .. controls (-1.5,1.5) and (-1.5,-1.5) .. (0,0);
                     \end{pgfinterruptboundingbox}
                        \begin{scope}
                         \clip (-1.3, -1) rectangle (0.65, 1);
                         \begin{pgfinterruptboundingbox}
                                          \draw [dotted] (0,0) .. controls (1.5,1.5) and (1.5,-1.5) .. (0,0);
                                          \end{pgfinterruptboundingbox}
                     \draw [dotted] plot [smooth cycle] coordinates {(0, 0.2) (-0.65, 0.55) (-1.1, 0.4) (-1.25, 0) (-1.1, -0.4) (-0.65, -0.55) (0,-0.2) (0.65, -0.55) (1, -0.5) (1.2,0) (1, 0.5)(0.65, 0.55)};
                       \fill [opacity = 0.1] plot [smooth cycle] coordinates {(0, 0.2) (-0.65, 0.55) (-1.1, 0.4) (-1.25, 0) (-1.1, -0.4) (-0.65, -0.55) (0,-0.2) (0.65, -0.55) (1, -0.5) (1.2,0) (1, 0.5)(0.65, 0.55)}; 
                         \fill [fill = white] (0.65,0) ellipse (0.35 and 0.3);
                         \draw [dotted] (0.65,0) ellipse (0.35 and 0.3);
                     \end{scope}
                         \fill [fill = white] (-0.65,0) ellipse (0.35 and 0.3);
                         \draw [dotted] (-0.65,0) ellipse (0.35 and 0.3);;
                          \draw [] (0.65, 0.285) -- (0.65, 0.56);
                                   \draw [] (0.65, -0.285) -- (0.65, -0.56);
\end{tikzpicture}
&
\centering
\begin{tikzpicture}[thick, scale = 2]
              \draw[decoration={markings, mark=at position 0.5 with {\arrow{>}}}, postaction={decorate}](1.2, 0) -- (1.5, 0.5) node[left, align=left] at (1.35, 0.25) {$e_0$};
                  \draw [dashed, decoration={markings, mark=at position 0.5 with {\arrow{>}}}, postaction={decorate}](1.8, 0) -- (1.5, 0.5) node[right, align=right] at (1.65, 0.25) {$e_1$};
         \draw[dashed, decoration={markings, mark=at position 0.5 with {\arrow{>}}}, postaction={decorate}](1.5, 0.5) -- (1.5, 1) node[right, align=right] at (1.5, 0.7) {$e_2$};
       \fill (1.5, 0.5) circle [radius=1pt];
\end{tikzpicture}
& 
$\mu([v,x])= \,\eps_i\psi(  f(x) )\ln |   f (x)| + \eta_i(   f(x) ),$
where $\eps_0 = \eps_1 =  -1$, $\eps_2 = 2,$ $\psi(0) = 0$, $\dot{\psi}(0) \neq 0$,
and $\eta_0 + \eta_1 + \eta_2 = 0$.
\\
\hline
\centering VI
&
\centering 
\begin{tikzpicture}[thick, scale = 1.33]
                     \fill (0, 0) circle [radius=1.5pt];
                        \begin{scope}                  
                         \clip (-1.5, -1) rectangle (1.5, 1);
                                              \begin{pgfinterruptboundingbox}
                     \draw [dotted] (0,0) .. controls (-1.5,1.5) and (-1.5,-1.5) .. (0,0);
                     \end{pgfinterruptboundingbox}
                         \begin{pgfinterruptboundingbox}
                                          \draw [dotted] (0,0) .. controls (1.5,1.5) and (1.5,-1.5) .. (0,0);
                                          \end{pgfinterruptboundingbox}
                     \draw [dotted] plot [smooth cycle] coordinates {(0, 0.2) (-0.65, 0.55) (-1.1, 0.4) (-1.25, 0) (-1.1, -0.4) (-0.65, -0.55) (0,-0.2) (0.65, -0.55) (1, -0.5) (1.2,0) (1, 0.5)(0.65, 0.55)};
                       \fill [opacity = 0.1] plot [smooth cycle] coordinates {(0, 0.2) (-0.65, 0.55) (-1.1, 0.4) (-1.25, 0) (-1.1, -0.4) (-0.65, -0.55) (0,-0.2) (0.65, -0.55) (1, -0.5) (1.2,0) (1, 0.5)(0.65, 0.55)};
                         \fill [fill = white] (0.65,0) ellipse (0.35 and 0.3);
                         \draw [dotted] (0.65,0) ellipse (0.35 and 0.3);;
                           \fill [fill = white] (-0.65,0) ellipse (0.35 and 0.3);
                            \draw [dotted] (-0.65,0) ellipse (0.35 and 0.3);;
                     \end{scope}                    
\end{tikzpicture}
& 
\centering
\begin{tikzpicture}[thick, scale = 2]
              \draw[decoration={markings, mark=at position 0.5 with {\arrow{>}}}, postaction={decorate}](1.2, 0) -- (1.5, 0.5) node[left,align=left] at (1.35, 0.25) {$e_0$};
                  \draw[decoration={markings, mark=at position 0.5 with {\arrow{>}}}, postaction={decorate}](1.8, 0) -- (1.5, 0.5) node[right,align=right] at (1.65, 0.25) {$e_1$};
         \draw[decoration={markings, mark=at position 0.5 with {\arrow{>}}}, postaction={decorate}](1.5, 0.5) -- (1.5, 1) node[right,align=right] at (1.5, 0.7) {$e_2$};
       \fill (1.5, 0.5) circle [radius=1pt];
\end{tikzpicture}
&
$\mu([v, x])= \,\eps_i\psi(  f(x) )\ln |   f (x)| + \eta_i(   f(x) ),$
where $\eps_0 = \eps_1 =  -1,$ $\eps_2 = 2,$ $\psi(0) = 0$, $\dot{\psi}(0) \neq 0$,
and $\eta_0 + \eta_1 + \eta_2 = 0$.
\\
\hline
\centering VII
& 
\centering
\begin{tikzpicture}[thick, scale = 2]
  \draw [dotted] (0, 0.4) circle [radius = 0.4];
          \fill (0, 0.4) circle [radius=1pt];
            \begin{scope}
    \clip (-0.6,-0.12) rectangle (0.6, 1);
     \draw [dotted] (0, 0.4) circle [radius = 0.5];
       \fill [opacity = 0.1] (0, 0.4) circle [radius = 0.5];
              \draw [dotted]  (0, 0.4) circle [radius = 0.2];
              \draw [dotted]  (0, 0.4) circle [radius = 0.3];;
\end{scope}
\end{tikzpicture}
& 
\centering
\begin{tikzpicture}[thick, scale = 2]
      \draw [decoration={markings, mark=at position 0.5 with {\arrow{>}}}, postaction={decorate}](1.5, 0) -- (1.5, 0.5);
       \fill (1.5, 0) circle [radius=1pt];
\end{tikzpicture}
&
$\mu([v, x])= \psi(f(x)),$ where $\psi(0) = 0,$ and $\dot{\psi}(0) \neq 0.$
\\
\hline
\end{tabular}
}
\caption{7 types of neighborhoods of singular points with corresponding Reeb graphs and asymptotics for the measure on a Reeb graph (figures are partially taken from~\cite{izosimov2017classification}). The notation $\mu([v, x])$ is a measure that is introduced below in Definition~$3.8.$ 
In order to simplify notation we assume that $f(v)=0.$ If not, we replace $f$ by $\tilde{f}(x):=f(x)-f(v).$
}
\label{table:types}
\end{table*}
\subsection{Measured Reeb graphs}
Now, fix an area form $\omega$ on the surface $M.$ Then the natural projection map $\pi \colon M \to \Gamma_F$ induces a measure $\mu:=\pi_*\omega$ on the graph $\Gamma_F.$ 
\begin{definition}\label{def:quasi-smooth_measure}
A measure $\mu$ on an abstract Reeb graph $(\Gamma, f)$ is called \emph{quasi-smooth} if the following conditions hold.
\begin{enumerate}
\item The measure $\mu$ has a $C^\infty$-smooth non-zero density $\diff \mu / \diff f$ in the complement $\Gamma\setminus V(\Gamma)$.
\item In a neighbourhood of each vertex the measure $\mu$ can be expressed by the corresponding formula from Table~\ref{table:types}.
\end{enumerate}
\end{definition}
\begin{proposition}
Let $(M,\omega)$ be a compact connected symplectic surface with a boundary $\partial M,$ and let $F \colon M\to \R$ be a simple Morse function. Then the measure $\mu:=\pi_*\omega$ is quasi-smooth.
\end{proposition}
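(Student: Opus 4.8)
The plan is to exploit the locality of the pushforward: since $\mu=\pi_*\omega$, its restriction to a small neighbourhood $U\subset\Gamma_F$ of a point $x_0$ is determined entirely by $\omega$ and $F$ on $\pi^{-1}(U)\subset M$, and for a subsegment $[v,x]$ of an edge one has $\mu([v,x])=\int_{\pi^{-1}([v,x])}\omega$. Thus it suffices to verify the two defining conditions of quasi-smoothness separately on the open edges and in a neighbourhood of each of the seven vertex types of Table~\ref{table:types}.

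First I would treat the \emph{regular part}. An interior point $x_0$ of an edge is a regular value, and $L:=\pi^{-1}(x_0)$ is a compact level --- a circle for a solid edge, a segment meeting $\partial M$ at both ends for a dashed edge. Covering $L$ by the regular normal charts recalled in the Preliminaries ($F=p,\ \omega=\diff p\wedge\diff q$ in the interior, and the same with $\partial M=\{q=0\}$ near the boundary), the coarea formula gives $\tfrac{\diff}{\diff c}\int_{F\le c}\omega=\ell(c)$, where in these charts $\omega=\diff F\wedge\diff q$ and $\ell(c)$ is simply the $q$-length of the level $\{F=c\}$. Since $\diff F\ne0$ along the compact level, $\ell$ is a smooth, strictly positive function of $c$ by the implicit function theorem, so $\diff\mu/\diff f=\ell$ is $C^\infty$ and non-vanishing: this is condition~(1).

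Next come the \emph{vertices}. Here I would substitute the local normal forms --- Theorem~\ref{theorem:md} for an interior critical point ($S=p^2+q^2$ or $S=pq$) and Theorem~\ref{theorem:md_boundary} for a critical point of $F|_{\partial M}$ ($S=q+p^2$ or $S=q-p^2$) --- and compute the area of $\pi^{-1}([v,x])$ in the model coordinates, writing $F=\lambda\circ S$ and substituting $s=\lambda^{-1}(f)$ at the end. There are four model integrals: for an interior extremum the disk $\{S\le s\}$ has area $\pi s$ (Type VII); for a boundary extremum the region $\{S\le s,\ q\ge0\}$ has area $\tfrac43 s^{3/2}$, exactly as in the proof of Theorem~\ref{theorem:md_boundary}(i) (Type I); for an interior saddle, integrating $\omega$ over the sectors cut out by the hyperbolas $\{pq=s\}$ produces a term $s\ln s$ (Types IV--VI); and for a boundary saddle the parabola computation in the proof of Theorem~\ref{theorem:md_boundary}(ii) gives $\tfrac43(-s)^{3/2}$ up to a smooth remainder (Types II, III). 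To recover the tabulated asymptotics I would then factor out $\sqrt{|f|}$, resp.\ $\ln|f|$: because $\lambda^{-1}$ is a diffeomorphism near $0$ with $\dot\lambda(0)\ne0$, Hadamard's lemma makes $\lambda^{-1}(f)/f$ a smooth positive function, whence $s^{3/2}=\psi(f)\sqrt{|f|}$ and $s\ln s=\psi(f)\ln|f|+(\text{smooth})$ with $\psi$ smooth, $\psi(0)=0$, $\dot\psi(0)\ne0$, matching Table~\ref{table:types}. Finally, the sign pattern $\eps_i$ and the smooth corrections $\eta_i$ (with $\sum_i\eta_i=0$) for the several edges at a saddle are read off by assigning to each edge the sectors/components of the model level set it represents and invoking additivity of area.

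The main obstacle is the two saddle cases. There one must (a) isolate the exact singular term --- $s\ln s$ for the interior saddle, $s^{3/2}$ for the boundary saddle --- from the model area integrals, and (b) prove that what is left over is a genuinely \emph{smooth} function of $f$, not merely continuous. Step (b) is precisely the kind of statement controlled by the Hadamard factorizations and by Lemma~\ref{lemma_sqrt_diff}, so I would route the regularity of the remainders through those results; correctly bookkeeping the coefficients $\eps_i$ and the smooth pieces $\eta_i$ across all edges incident to a II-, III-, IV-, V- or VI-vertex is the remaining, more combinatorial, part of the argument.
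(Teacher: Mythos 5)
Your proposal is correct and takes essentially the same route as the paper: the paper's proof also reduces the statement to the local normal forms of Theorems~\ref{theorem:md} and~\ref{theorem:md_boundary} and to the study of the area between a singular $F$-level and nearby regular levels, citing Dufour--Molino--Toulet for the interior vertex types and noting that the boundary types follow the same lines. The only difference is that you sketch the model area integrals and the extraction of the $s^{3/2}$ and $s\ln s$ terms explicitly, where the paper delegates these computations (and the smoothness of the remainders, which you rightly flag as the delicate point) to the cited references.
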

\begin{proof}
    For vertices of types~VI~and~VII this was proved in~\cite[Subsection I.1.2]{dufour1994classification}. The proof is based on Theorem~\ref{theorem:md}, the essence of the proof is the study of the area between the non-singular level sets of the function $F$ and a singular $F$-level. The proof for other types follows the same lines, with the only difference that it uses both Theorems~\ref{theorem:md}~and~\ref{theorem:md_boundary}. Note that or vertices of types~I~and~VII the function 
    $\psi$ is uniquely (and explicitly) determined by the corresponding function $\lambda$ (see Theorems~\ref{theorem:md}~and~\ref{theorem:md_boundary}). In other cases $\psi$ is determined by the corresponding function $\lambda$ up to a function flat at the origin, and there is no explicit expression for $\psi$ in terms of $\lambda$ (see details in Toulet's thesis \cite[Subsection 2.2]{toulet1996thesis}.
\end{proof}
The above properties of the measure $\mu$ make it natural to introduce the
following definition of an abstract measured Reeb graph.
\begin{definition}\label{def:measured_Reeb_graph}
    A \emph{measured Reeb graph} $(\Gamma, f,\mu)$ is a Reeb graph $(\Gamma, f)$ equipped with a quasi-smooth measure $\mu.$
\end{definition}
\begin{definition}
    Two measured Reeb graphs $(\Gamma_1,f,\mu)$ and $(\Gamma_2,g,\nu)$ are said to be \emph{equivalent} by means of the isomorphism $\phi:\Gamma_1\to\Gamma_2$ if the map $\phi:$
\begin{enumerate}[label=(\roman*)]
\item is an isomorphism between the Reeb graphs $(\Gamma_1,f)$ and $(\Gamma_2,g);$
\item pushes the measure $\mu$ to the measure $\nu.$
\end{enumerate}
\end{definition}
\begin{definition}\label{def:compatible}
{\rm
A measured Reeb graph $(\Gamma, f, \mu)$ is \emph{compatible} with $(M, \omega)$ if the following conditions hold:
\begin{enumerate}[label=(\roman*)]  
\item The genus $g(\Gamma)$ of the graph $\Gamma$ is equal to the genus $g(M)$ of the surface $M.$
\item The number $\sigma(\Gamma)$ of boundary cycles is equal to the number $\dim H_0(\partial M)$ of boundary components of the surface $M.$
\item The volume of $\Gamma$ with respect to the measure $\mu$ is equal to the area of the surface $M$: $\int_\Gamma \diff \mu=\int_M\omega.$
\end{enumerate}
}
\end{definition}
\subsection{Classification of simple Morse functions up to a symplectomorphism}
\begin{theorem}\label{classification_of_functions_symplectic}
Let $M$ be a compact connected oriented surface with boundary $\partial M.$ Then there is a one-to-one correspondence between simple Morse functions on $M$, considered up to symplectomorphism, and (isomorphism classes of) measured Reeb graphs compatible with $M$. In other words, the following statements hold.
\begin{longenum}
   \item Let $F,$ $G \colon M \to \R$ be two simple Morse functions. Then the following conditions are equivalent:
   \begin{longenum}
       \item There exists a symplectomorphism $\Phi \colon M \to M$ such that $\Phi^*F=G.$
       \item Measured Reeb graphs of $F$ and $G$ are isomorphic. 
   \end{longenum}
   Moreover, every isomorphism $\phi \colon (\Gamma_F, f, \mu_F) \to (\Gamma_G, g, \mu_G)$ can be lifted to a symplectomorphism $\Phi \colon M \to M$ such that $\Phi^*F=G.$
   \item For each measured Reeb graph $(\Gamma, f, \mu)$ compatible  with $(M,\omega)$ there exists a simple Morse function $F:M\to\R$ such that the corresponding measured Reeb graph $\Gamma_F$ is isomorphic to $(\Gamma, f, \mu).$
\end{longenum}
\end{theorem}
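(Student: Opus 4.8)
The plan is to split the theorem into three pieces: the easy forward implication of part~(i), the \emph{Moreover} clause (which simultaneously yields the reverse implication of part~(i)), and the realization statement~(ii). The guiding principle is that, once the underlying \emph{smooth} classification of simple Morse functions on surfaces with boundary from~\cite{hladysh2019simple} is available, every remaining difficulty is of Moser type and is governed by how the pushforward measure behaves near the singular fibres --- which is exactly the content of the local normal forms in Theorems~\ref{theorem:md} and~\ref{theorem:md_boundary} and of the asymptotics collected in Table~\ref{table:types}. The forward implication of part~(i) is immediate: a symplectomorphism $\Phi$ with $\Phi^*F=G$ is in particular a diffeomorphism taking the foliation by $G$-levels to the foliation by $F$-levels, hence it descends to an isomorphism $\phi$ of the underlying Reeb graphs intertwining $f$ and $g$; and because $\Phi^*\omega=\omega$ one has $\phi_*(\pi_*\omega)=\pi_*(\Phi_*\omega)=\pi_*\omega$, so $\phi$ also matches the induced measures and is an isomorphism of measured Reeb graphs.

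For the \emph{Moreover} clause I would argue as follows. First, forgetting the measures, the smooth classification of~\cite{hladysh2019simple} lifts the given isomorphism $\phi$ to a diffeomorphism $\Phi_0\colon M\to M$ with $\Phi_0^*F=G$ that covers $\phi$. Set $\omega':=\Phi_0^*\omega$. Since $\Phi_0$ covers $\phi$ and $\phi$ pushes $\mu_G$ to $\mu_F$, the two area forms $\omega$ and $\omega'$ on $M$ induce \emph{the same} measure on the Reeb graph of $G$: indeed $\pi_*\omega'=\pi_*\Phi_0^*\omega=\phi^*(\pi_*\omega)=\phi^*\mu_F=\mu_G=\pi_*\omega$. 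It then suffices to produce a $G$-preserving diffeomorphism $\Psi$, covering the identity of the graph, with $\Psi^*\omega'=\omega$; the composition $\Phi:=\Phi_0\circ\Psi$ is then the desired symplectomorphism, since $\Phi^*F=\Psi^*G=G$ and $\Phi^*\omega=\Psi^*\omega'=\omega$. To build $\Psi$ I would run Moser's path method along $\omega_t:=(1-t)\omega+t\omega'$, seeking the generating field $v_t$ in the form $\iota_{v_t}\omega_t=-h_t\,\diff G$; such a $v_t$ is automatically tangent to the $G$-levels, so its flow preserves $G$. The equality $\pi_*\omega=\pi_*\omega'$ says precisely that the cumulative area trapped between the two forms below each level vanishes, and this cumulative-area function is what produces $h_t$ fibrewise.

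The main obstacle is to make this construction of $h_t$ smooth across the singular fibres and along the boundary, where the foliation degenerates. This is where the local analysis is indispensable: in the charts of Theorems~\ref{theorem:md} and~\ref{theorem:md_boundary} the area swept out below a level can be computed explicitly, and the quasi-smoothness of $\mu$, encoded by the asymptotic expansions of Table~\ref{table:types}, guarantees that the cumulative-area function (hence $h_t$ and $v_t$) extends quasi-smoothly over each vertex of types~I--VII while remaining tangent to the boundary. The flatness lemma (Lemma~\ref{lemma_sqrt_diff}) and the area-function computations already used in the proof of Theorem~\ref{theorem:md_boundary} are exactly the tools that control these expansions; verifying that the local models match at II- and IV-vertices, where the cyclic order enters, is the delicate point.

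Finally, for part~(ii) I would first invoke the smooth realization part of~\cite{hladysh2019simple}: the compatibility conditions~(i) and~(ii) of Definition~\ref{def:compatible}, together with the genus formula of Theorem~\ref{formula_for_total_genus}, ensure that the abstract graph $(\Gamma,f)$ is realized by some simple Morse function $F_0$ on $M$. I would then construct, using the normal forms near the vertices and the prescribed density $\diff\mu/\diff f$ on the edges, an area form $\hat\omega$ on $M$ whose pushforward under $\pi_{F_0}$ is the given measure $\mu$; here quasi-smoothness and Table~\ref{table:types} again guarantee that the local pieces patch smoothly. By compatibility condition~(iii) the total areas agree, $\int_M\hat\omega=\int_\Gamma\diff\mu=\int_M\omega$, so the ordinary (non-fibred) Moser theorem yields a diffeomorphism $\Theta$ with $\Theta^*\hat\omega=\omega$, and $F:=F_0\circ\Theta$ is then a simple Morse function on $(M,\omega)$ whose measured Reeb graph is isomorphic to $(\Gamma,f,\mu)$.
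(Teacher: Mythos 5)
Your forward implication and your part~(ii) coincide with the paper's: the paper likewise takes the topological realization from~\cite{hladysh2019simple}, assembles an area form with the prescribed pushforward out of the local normal forms via a partition of unity, and finishes with the ordinary Moser theorem. For the implication (b)$\Rightarrow$(a) of part~(i), however, your route is genuinely different. The paper never forms $\omega'=\Phi_0^*\omega$ and corrects it by a global fibered Moser flow; it builds the symplectomorphism directly in pieces: over the interior of each edge it uses the ``time'' coordinates $(F,T_F)$, in which $\omega=\diff F\wedge\diff T_F$, and maps $(F,T_F)\mapsto(G,T_G)$ (equality of measures gives equality of periods); over each singular fibre it uses the charts of Theorems~\ref{theorem:md} and~\ref{theorem:md_boundary}, \emph{re-choosing} the chart to absorb the flat-function ambiguity in $\lambda$; and it then reconciles the two constructions on the overlaps, using that the discrepancy on a fibered rectangle is the identity while on a fibered cylinder it is a Hamiltonian automorphism that can be cut off and globalized. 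Your two-step scheme is a legitimate alternative, and your bookkeeping on regular fibres is right: $\pi_*\omega=\pi_*\omega'$ is exactly the periodicity condition for $h$ on circle fibres and forces $h$ to vanish at \emph{both} endpoints of each segment fibre once it vanishes at one, which is what makes $v_t$ tangent to $\partial M$.

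The caveat is that everything you defer to ``the local analysis'' is where the theorem actually lives, and the sketch does not say how it would be done. Concretely: (1) near a hyperbolic singular fibre the fibrewise integration constants of $h$ on the several adjacent edges must be chosen coherently so that $h$ extends smoothly across the fibre, and the period function there diverges logarithmically (the $\ln\abs{f}$ asymptotics of Table~\ref{table:types}); (2) $\pi_*\omega=\pi_*\omega'$ only forces the normal-form functions $\lambda$ of $(G,\omega)$ and $(G,\omega')$ to share a Taylor series, so to solve $\diff h\wedge\diff G=\omega'-\omega$ smoothly near a hyperbolic vertex or a boundary vertex of type $S=q-p^2$ you must invoke the ``every flat function is realizable'' halves of Theorems~\ref{theorem:md}(ii) and~\ref{theorem:md_boundary}(ii) --- this is the precise analogue of the chart re-choice in the paper's proof, and without it the existence of $h$ is unjustified; (3) $\Phi_0$ must be arranged orientation-preserving, else $\omega_t=(1-t)\omega+t\omega'$ degenerates. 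Note also that the cyclic order at II- and IV-vertices is not where the measure-theoretic difficulty sits --- it is consumed entirely by the smooth classification of~\cite{hladysh2019simple}; the delicate vertices for $h$ are the hyperbolic ones. With (1)--(3) supplied your argument closes; as written, the crux is named but not proved.
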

\begin{remark}
Note that the formulation of this theorem is identical to the formulation of Theorem~$3.11$ from \cite{izosimov2016coadjoint}.
The difference, of course, is that all notions in the present paper are extended to cover the case of surfaces with boundary.
\end{remark}
\begin{proof}
Let us prove the first statement. The implication $(a) \implies (b)$ is evident, so it suffices to prove the implication $(b) \implies (a).$
Let $\phi \colon \Gamma_F \to \Gamma_G$ be an isomorphism of measured Reeb graphs. We need to construct a symplectomorphism $\Phi \colon M \to M$ such that $\Phi^*F=G$ and $\pi_G\circ\Phi=\phi\circ\pi_F.$
\par
Let $\mathit{\ell} \subset M$ be a smooth oriented curve which is transversal to the level sets of the function $F,$ it does not intersect the singular levels of the function $F,$ and such that the function $F$ is strictly increasing along the curve $\mathit{\ell}.$ Consider the Hamiltonian flow $P_F^t$ corresponding to the function $F.$ We denote by $T_F(p_F,q_F)$ the time necessary to go from the curve $\ell$ to the point $(p_F,q_F)$ under the action of $P^t;$ see Figure~\ref{fig:time_function_a}. The pair of functions $(F,T_F)$ forms a coordinate system in some neighborhood of $\ell$ such that $\omega=\diff F \wedge \diff T_F$ (it is a standard computation, see proof in \cite[Lemma 4]{kirillov2018morse}). In particular, this construction works for the boundary curve $\partial M;$ see Figure~\ref{fig:time_function_b}.
The range of the function $T_F$ along the non-critical level of $F$ is a segment $[0,\Pi(F)]$ in the case when the $F$-level is a segment, and it is a half-interval $[0,\Pi(F))$ in the case when the $F$-level is a circle. The function $\Pi$ is called a period. It follows from Stokes' theorem that $\Pi(F)$ is equal to the derivative $\frac{\diff \mu}{\diff f}.$ 
\begin{figure}[H]
\centering
\subfigure[]{\label{fig:time_function_a}
\begin{tikzpicture}[scale=0.5]
\draw[black] (5,5) node[anchor=north] {$(F,0)$};
\draw[black] (0,5) node[anchor=north] {$(F,T_F)$};
\draw [fill] (5, 5) circle [radius=.1];
\draw [fill] (0, 5) circle [radius=.1];
\draw[black, thick,->] (0, 5) -- (10, 5) node[right] {};
\draw[black, thick,->] (0, 1) -- (10, 1) node[right] {};
\draw[black, thick,->] (0, 9) -- (10, 9) node[right] {};
\draw (0,0) .. controls (4,1) and (6,9) .. (10,10) node[right] {$\mathit{\ell}$};
\draw[black, thick,->] (-2, 0) -- (-2, 10) node[right] {$F$};
\draw [decorate, decoration={brace, amplitude=10pt}, xshift=0pt, yshift=0pt]
(0,5) -- (5,5) node [black, midway, above, yshift=10pt] {\footnotesize $T_F$};
\end{tikzpicture}}
\subfigure[]{\label{fig:time_function_b}
\begin{tikzpicture}[scale=0.5]
\draw[black] (5,5) node[anchor=north] {$(F,T_F)$};
\draw [fill] (5, 5) circle [radius=.1];
\draw [fill] (10, 5) circle [radius=.1];
\draw[black, thick,->] (0, 5) -- (10, 5) node[right] {$(F,0)$};
\draw[black, thick,->] (0, 1) -- (10, 1) node[right] {};
\draw[black, thick,->] (0, 9) -- (10, 9) node[right] {};
\draw[black, thick,->] (0, 0) -- (0, 10) node[right] {$\partial M$};
\draw[black, thick,<-] (10, 0) -- (10, 10) node[right] {$\partial M$};
\draw[black, thick,->] (-2, 0) -- (-2, 10) node[right] {$F$};
\draw [decorate, decoration={brace, amplitude=10pt},xshift=0pt, yshift=0pt]
(5,5) -- (10,5) node [black, midway, above, yshift=10pt] {\footnotesize $T_F$};
\end{tikzpicture}}
\caption{An illustration to the definition of the function $T_F.$}
\label{fig:time_function}
\end{figure}
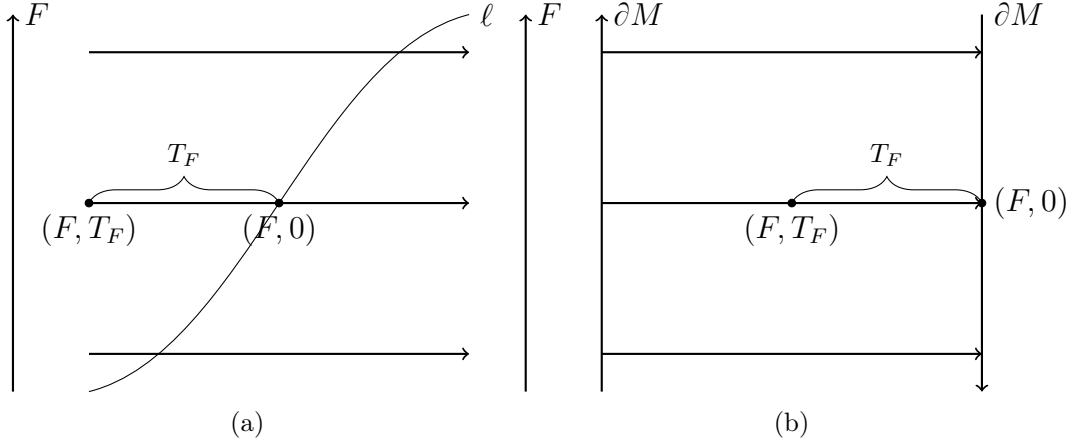
Let $e\subset\Gamma_F^d$ be a dashed edge. The formula $(F,T_F)\mapsto(G,T_G)$ defines a symplectomorphism from the interior of 
$
\pi_F^{-1}(e)
$ 
to the interior of 
$
\pi_G^{-1}(\phi(e)).
$ 
The condition $\phi_*\mu_F=\mu_G$ guarantees that the periods of the functions $T_F$ and $T_G$ coincide and hence the symplectomorphism is well-defined. Now let $e'\subset \Gamma_F^s$ be a solid edge. Let 
$
\mathit{\ell} \subset M
$
be a smooth oriented curve  which is transversal to the level sets of the function $F.$ We also assume it does not intersect
the singular levels of the function $F$; and the function $F$ is strictly increasing along the curve $\mathit{\ell}.$ 
Then, as above, we obtain a symplectomorphism from the interior of 
$
\pi_F^{-1}(e')
$ 
to the interior of 
$
\pi_G^{-1}(\phi(e')).
$ 
By applying the same procedure to all edges of the graph $\Gamma_F$ we obtain a symplectomorphism 
$$\Phi_1\colon \pi^{-1}[\Gamma_F\setminus V(\Gamma_F)]\to\pi^{-1}[\Gamma_G\setminus V(\Gamma_G)]$$
such that $\Phi_1^*F=G$ and $\pi_G\circ\Phi_1=\phi\circ\pi_F.$
\par
Now let $O$ be a singular point for the function $F$ or its restriction $F|_{\partial M}.$ Then there is only one way to define the image of $O$:
$$\Phi(O):=\pi_G^{-1}(\phi(\pi_F(O))).$$
Let $(p_F,q_F)$ (respectively, $(p_G,q_G)$) be a chart centered at the point $O$ (respectively, $\Phi(O)$) as in Theorem~\ref{theorem:md}~or~\ref{theorem:md_boundary}. Then the condition $\phi_*\mu_F=\mu_G$ guarantees that the corresponding functions $\lambda_F$ and $\lambda_G$ are the same or they differ by a function flat at the origin. In the latter case it follows from Theorem~\ref{theorem:md}~or~\ref{theorem:md_boundary} that we can replace the chart $(p_F,q_F)$ with a chart $(\tilde{p}_F,\tilde{q}_F)$ such that $\tilde{\lambda}_F=\lambda_G.$ So without loss of generality we may assume that $\lambda_F=\lambda_G.$ Therefore, one can define $\Phi$ in some neighbourhood $U_O$ of $O$ by the formula $$\Phi \colon (p_F,q_F)\mapsto (p_G,q_G).$$ This local symplectomorphism $\Phi$ extends uniquely to a semi-local symplectomorphism 
$$
\Phi\colon \pi_F^{-1}(\pi_F(U))\to \pi_G^{-1}([\phi\circ \pi_F](U)).
$$
Without loss of generality we may assume that $\pi_F^{-1}(\pi_F(U_O))$ is a ``standard'' neighbourhood of the singular level $\pi_F^{-1}(\pi_F(O))$ (see Table~\ref{table:types}), i.e. it is a connected component of the set $\{P\in M\colon \abs{F(P)-F(O)}<\varepsilon\}$ containing the point $O$ and the number $\varepsilon>0$ is sufficiently small so that these ``standard'' neighbourhoods for distinct $O$ are pairwise disjoint.
Denote by $U_{F,\varepsilon}$ the union of all these neighbourhoods.
By applying the same procedure to all singular points of the function $F$ or its restriction $F|_{\partial M}$ we obtain a symplectomorphism 
$$\Phi_2\colon U_{F,\varepsilon}\to U_{G,\varepsilon}.$$
such that $\Phi_2^*F=G$ and $\pi_G\circ\Phi_2=\phi\circ\pi_F.$
\par
So the isomorphism $\phi \colon \Gamma_F \to \Gamma_G$ is lifted to a symplectomorphism 
$$\Phi_1\colon \pi^{-1}[\Gamma_F\setminus V(\Gamma_F)]\to\pi^{-1}[\Gamma_G\setminus V(\Gamma_G)]$$ 
and to a symplectomorphism
$$\Phi_2\colon U_{F,\varepsilon}\to U_{G,\varepsilon}.$$
However, these two symplectomorphisms not necessarily define a global symplectomorphism of the surface $M.$ 
Let $e\subset \Gamma_F^d$ be a dashed edge. Then the intersection 
$
U_{F,\varepsilon}\cap\pi^{-1}(e)
$
is a disjoint union of two rectangles and the ratio 
$
\Phi_2^{-1}\circ\Phi_1
$ 
is a symplectic automorphism of this union preserving each component and also preserving the function $F.$ The only symplectic automorphism of a fibered rectangle is the identity. So 
$
\Phi_2^{-1}\circ\Phi_1 = \Id
$ 
on 
$
U_{F,\varepsilon}\cap\pi^{-1}(e)
$
i.e. the symplectomorphisms $\Phi_2$ and $\Phi_1$ agree with each other on the preimage $\pi_F^{-1}(e)$ of the edge $e.$
Now let $e\subset \Gamma_F^s$ be a solid edge. Then the intersection 
$
U_{F,\varepsilon}\cap\pi^{-1}(e)
$
is a disjoint union of two open cylinders and the ratio 
$
\Phi_2^{-1}\circ\Phi_1
$ 
is a symplectic automorphism of this union preserving each component and also preserving the function $F.$ Any symplectic automorphism of a fibered cylinder is a Hamiltonian automorphism. The same holds for their union. The corresponding Hamiltonian $H_t$ extends (using a bump function) to a smooth function on all of $M$ in such a way that its support is in the preimage of the edge $e$ for all $t\in[0,1].$
Let us denote this globally defined Hamiltonian automorphism by $\Theta.$ Now we have 
$
\Phi_1|_{\pi_F^{-1}(e)}=(\Phi_2 \circ \Theta)|_{\pi_F^{-1}(e)},
$ 
i.e. the symplectomorphisms $\Phi_2\circ\Theta$ and $\Phi_1$ do agree with each other on the preimage $\pi_F^{-1}(e)$ of the edge $e.$ By applying the same procedure to all solid edges of $\Gamma_F$ we obtain a globally defined symplectomorphism $\Phi:M\to M$ such that $\Phi^*F=G$ and $\pi_G\circ\Phi=\phi\circ\pi_F.$ This completes the proof of part $i)$.
\par
Now let us prove the second statement of the theorem. Given a triple $(\Gamma, f,\mu)$ we need to construct a quadruple  $(\tilde{M},\tilde{\pi},\tilde{F},\tilde{\omega})$ such that $\tilde{F}=f\circ \tilde{\pi}$ and $\tilde{\pi}_*\tilde{\omega}=\mu.$
If this is done then $$\int_{\tilde{M}}\tilde{\omega}=\int_{\Gamma}\diff \mu=\int_M\omega$$ and it follows from Moser's theorem~\cite{moser1965volume} that there is a diffeomorphism $\Phi:\tilde{M}\to M$ such that $\Phi^*\omega=\tilde{\omega}$ so that one can take 
$
F=\tilde{F}\circ \Phi^{-1}.
$ 
It follows from~\cite{hladysh2019simple}
that there exists a surface $\tilde{M}$ with a simple Morse function $\tilde{F}$ and a projection $\tilde{\pi}:\tilde{M}\to\Gamma$ such that $\tilde{F}=f\circ \pi.$ It remains to construct a symplectic form $\tilde{\omega}$ such that $\tilde{\pi}_*\tilde{\omega}=\mu.$
Let $O$ be a singular point of the function $\tilde{F}$ or its restriction $\tilde{F}|_{\partial M}.$ It follows from the proofs of Theorem~\ref{theorem:md} and Theorem~\ref{theorem:md_boundary} that there exists a symplectic form $\omega_O\in \Omega^2(M)$ such that 
$\pi_*(\omega_O)|_{U}=\mu|_{U}$ for some neighborhood $U$ of the vertex $\tilde{\pi}(O).$ Using an appropriate partition of unity we construct a symplectic form $\tilde{\omega}$ as a combination of forms $\omega_O,$ such that $\tilde{\pi}_*\tilde{\omega}=\mu$.
\end{proof}
\section{Classification of generic coadjoint orbits of symplectomorphism groups}
\label{section:orbits}
\subsection{From Morse functions to coadjoint orbits} \label{subsection:orbits_to_functions}
Throughout this section, let $(M,\omega)$ be a compact connected symplectic surface with boundary $\partial M.$ By 
$\SDiff(M)$ we denote the Lie group\footnote{
See \cite[Chapter I, Section 1.1]{khesin2008geometry} for details on Lie groups and Lie algebras in an infinite-dimensional setting.
} 
of all symplectomorphisms of $M.$ Note that all elements of $\SDiff(M)$ preserve the boundary $\partial M$ but do 
\emph{not necessarily} preserve the boundary $\partial M$ \emph{pointwise}. The group $\SDiff(M)$ has the Lie algebra $\SVect(M)$ of divergence-free vector fields on $M$ tangent to the boundary $\partial M.$ The regular dual space $\SVect^*(M)$ can be identified with the space of cosets $\Omega^1(M)/d\Omega^0(M)$ (see Appendix). Moreover, the natural action of the group $\SDiff(M)$ on the space of cosets $\Omega^1(M)/\diff\Omega^0(M)$ by means of pull-backs coincides with the coadjoint action of the group of symplectomorphisms $\SDiff(M):$
$$\Ad_\Phi^*[\alpha]=[\Phi^*\alpha],$$
where $\Phi\in\SDiff(M)$ is a symplectomorphism and $\alpha \in\Omega^1(M)$ is a 1-form. 
\par
Define the exterior derivative operator $\diff$ on the space of cosets $\{\alpha+\diff f|f\in C^\infty(M)\}$ by the formula $\diff[\alpha]:=\diff \alpha.$ (This operator is well-defined on the cosets since $\diff(\alpha+\diff f)=\diff\alpha.)$
Consider the following mapping:
    $$\Diff \colon \Omega^1(M)/\diff \Omega^0(M) \to C^\infty(M),$$
defined by taking a \emph{vorticity function} 
$
\diff \alpha / \omega =: \Diff[\alpha].
$ 
It is easy to see that if the boundary $\partial M$ of the surface $M$ is not empty then the mapping $\Diff$ is a surjection. In the case of a closed surface $M$ there is a relation:
$$\int_M \Diff[\alpha]\omega=0$$
and the mapping $\Diff$ is surjective onto the space of zero-mean functions.
\par
Suppose that cosets $[\alpha]$ and $[\beta]$ belong to the same coadjoint orbit of $\SDiff(M)$. Then by definition, there is a symplectomorphism $\Phi$ such that $[\Phi^*\beta]=[\alpha]$ and the following diagram is commutative:
\[
\xymatrix{
[\beta] \ar[r]^{\Phi^*} \ar[d]_{\Diff} & [\alpha] \ar[d]^{\Diff} \\
\Diff[\beta] \ar[r]^{\Phi^*} & \Diff[\alpha]
}
\]
\begin{definition}
A coset $[\alpha]\in \Omega^1(M)/\diff \Omega^0(M)$ is called \emph{simple Morse} if $\Diff[\alpha]$ is a simple Morse functions. A coadjoint orbit $\pazocal O$ is called \emph{simple Morse} if some (and hence every) coset $[\alpha]\in \pazocal O$ is simple Morse. 
\end{definition}
With every simple Morse coset $[\alpha]\in \Omega^1(M)/\diff \Omega^0(M)$ one can associate a measured Reeb graph $\Gamma_{\Diff[\alpha]}.$ If two simple Morse cosets $[\alpha]$ and $[\beta]$ belong to the same coadjoint orbit then the corresponding Reeb graphs are isomorphic.
\par
Suppose that cosets $[\alpha]$ and $[\beta]$ have isomorphic Reeb graphs. Then it follows from Theorem~\ref{classification_of_functions_symplectic} that there exists a symplectomorphism $\Phi$ such that $\Phi^*\Diff[\beta]=\Diff[\alpha].$ Therefore, the 1-form $\Phi^*[\beta]-[\alpha]$ is closed. Since this 1-form is not necessarily exact, the cosets $[\alpha]$ and $[\beta]$ do not necessarily belong to the same coadjoint orbit. Nevertheless, we conclude that the space of coadjoint orbits 
corresponding to the same measured Reeb graph is finite-dimensional and its dimension is at most $\dim \Hom^1(M).$ Throughout this section, unless otherwise stated, all (co)homology groups will be with coefficients in $\R.$
\subsection{Circulation functions on a Reeb graph}
In~\cite{izosimov2016coadjoint} the notion of a circulation function was introduced for the case of closed surfaces. In the case of surfaces with boundary, we need a modification of that definition. Take a point $x\in\Gamma_F^s$ which is not a vertex. Then $\pi^{-1}(x)$ is a circle $C.$ It is naturally oriented as the boundary of the set of smaller values of the function $F$. The integral of a coset $[\alpha]$ over $C$ is well-defined. Thus we obtain a function 
$$
\circulation_{[\alpha]} \colon \Gamma_F^s\setminus V(\Gamma_F^s)\to \mathbb{R},
$$
defined by $\circulation_{[\alpha]}(x)=\int_{\pi^{-1}(x)}\alpha.$
\begin{proposition}[\cite{izosimov2016coadjoint}]
\label{circProperties}
The function $\circulation_{[\alpha]}=\int_{\pi^{-1}(x)}\alpha$ has the following properties.
\begin{enumerate}[label=(\roman*)]  
\item Assume that $x$ an $y$ are two interior points of some edge $e \subset \Gamma_F^s,$ and that $e$ is pointing from $x$ towards $y$. Then $\circulation_{[\alpha]}$ satisfies the Newton-Leibniz formula
$$
\circulation_{[\alpha]}(y)-\circulation_{[\alpha]}(x)=\int_x^y fd\mu
$$
\item for all vertices of $\Gamma^s$ which do not belong to $\Gamma^d$ the function $\circulation_{[\alpha]}$ satisfies the Kirchhoff rule at $v$:
\begin{align}
\label{3valentcirc}
\sum_{{e \to v}} \lim\nolimits_{{x \xrightarrow[]{e} v }} \circulation_{[\alpha]}(x)= \sum_{{e \leftarrow v}} \lim\nolimits_{{x \xrightarrow[]{e} v }} \circulation_{[\alpha]}(x)\,, 
\end{align}
where the notation $e \to v$ stands for the set of edges pointing at the vertex $v$, and ${e \leftarrow v}$ stands for the set of solid edges pointing away from $v$.  
\end{enumerate}
\end{proposition}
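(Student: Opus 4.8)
The plan is to derive both properties from Stokes' theorem, using that a representative $1$-form $\alpha$ of the coset satisfies $\diff\alpha=F\,\omega$, where $F=\Diff[\alpha]$ is the vorticity function, together with the defining property $\mu=\pi_*\omega$ of the pushforward measure and the fact that $F=f\circ\pi$ is constant along the fibres of $\pi$.

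For statement $(i)$ I would take the cylinder $A:=\pi^{-1}([x,y])\subset M$ lying over the subsegment of the edge $e$ from $x$ to $y$. Since $e$ is a solid edge, every level over its interior is a circle contained in the interior of $M$, so $A$ is a genuine annulus whose boundary consists precisely of the two circles $\pi^{-1}(x)$ and $\pi^{-1}(y)$. Orienting each fibre as the boundary of the set of smaller values of $F$ and comparing with the boundary orientation induced on $A$ by $\omega$, one finds $\partial A=\pi^{-1}(y)-\pi^{-1}(x)$, so Stokes' theorem gives
$$
\circulation_{[\alpha]}(y)-\circulation_{[\alpha]}(x)=\int_{\partial A}\alpha=\int_A\diff\alpha=\int_A F\,\omega.
$$
Because $F=f\circ\pi$ is fibrewise constant, the pushforward formula for $\mu=\pi_*\omega$ converts the last integral into $\int_{[x,y]}f\,\diff\mu=\int_x^y f\,\diff\mu$, which is the asserted Newton--Leibniz identity.

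For statement $(ii)$, a vertex $v$ of $\Gamma^s$ not lying on $\Gamma^d$ is the image of an interior critical point of $F$ --- a saddle (type~VI) or an extremum (type~VII) --- so every nearby level is an interior circle and the singular level $\pi^{-1}(v)$ is disjoint from $\partial M$. I would fix a small $\eps>0$ and let $R_\eps$ be the connected component of $\{\,\abs{F-f(v)}\le\eps\,\}$ containing $\pi^{-1}(v)$; its boundary is the disjoint union of the fibre circles at height $f(v)\pm\eps$ on the edges incident to $v$. Matching the ``smaller values'' orientation against the boundary orientation of $R_\eps$, the circles on the outgoing (upper) edges enter with a plus sign and those on the incoming (lower) edges with a minus sign (in the extremum case one of the two sums is empty and $R_\eps$ is a disk), so Stokes' theorem yields
$$
\sum_{e\leftarrow v}\circulation_{[\alpha]}(x_e^{+})-\sum_{e\to v}\circulation_{[\alpha]}(x_e^{-})=\int_{\partial R_\eps}\alpha=\int_{R_\eps}F\,\omega,
$$
where $x_e^{\pm}$ is the point of $e$ at height $f(v)\pm\eps$. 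Letting $\eps\to0$, the right-hand side is the integral of $f$ over a shrinking subinterval of $\Gamma$ with respect to $\mu$ and hence tends to $0$ (equivalently, $F$ is bounded while $\mathrm{Area}(R_\eps)\to0$), while the one-sided limits of the circulations along the incident edges exist by part $(i)$. Passing to the limit produces the Kirchhoff identity. The only genuine subtlety, and the step I would carry out most carefully, is the orientation bookkeeping: one must verify that the fibrewise ``boundary of smaller values'' convention is compatible with the orientations induced on $\partial A$ and $\partial R_\eps$ by the ambient orientation of $M$, and confirm that no contribution from $\partial M$ arises, which is exactly why the hypothesis $v\notin\Gamma^d$ (all fibres interior circles) is needed.
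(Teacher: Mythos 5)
The paper does not prove this proposition itself---it is quoted from \cite{izosimov2016coadjoint}---but your Stokes-theorem argument is correct and is precisely the standard proof of this statement: apply $\diff\alpha=F\omega$ over the annulus $\pi^{-1}([x,y])$ for (i) and over the shrinking saddle/extremum neighbourhood $R_\eps$ for (ii), with the orientation bookkeeping exactly as you describe. The only point you pass over quickly is that the one-sided limits in (ii) exist, which requires the improper integral $\int^v f\,\diff\mu$ to converge at the vertex; this follows from the logarithmic asymptotics of $\diff\mu/\diff f$ at type~VI vertices recorded in Table~\ref{table:types}, since $f\,\diff\mu/\diff f\sim\const\cdot\ln\abs{f-f(v)}$ is integrable.
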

Note that the function $f$ on the subgraph $\Gamma^s_F$ can be recovered from the circulation function 
$\circulation$ by the formula: $f= \diff \circulation/\diff \mu$. It follows from Proposition~\ref{circProperties} that the difference $C_{[\alpha]}-C_{[\beta]}$ is as an element of the relative homology group $\Hom_1(\Gamma_F,\Gamma_F^d).$ 
\par
The above properties of the circulation function $\circulation_{[\alpha]}$ make it natural to introduce the following definition of an abstract circulation function.
\begin{definition}\label{def:circ_graph}
{\rm
Let  $(\Gamma, f, \mu)$ be a measured Reeb graph. Any function $\circulation \colon \Gamma^s \setminus V(\Gamma^s) \to \R$ satisfying properties listed in Proposition~\ref{circProperties} is called a \emph{circulation function (an antiderivative)}.
}
\end{definition}
\begin{proposition}\label{circFuncs}
Let $(\Gamma, f, \mu)$ be a measured Reeb graph. 
\begin{longenum}
\item If the subgraph $\Gamma^d$ is not empty, then the pair $(f,\mu)$ on $\Gamma$ admits an antiderivative.
\item 
If the subgraph $\Gamma^d$ is empty, then the pair $(f,\mu)$ on $\Gamma$ admits an antiderivative if and only if
$
\int_{\Gamma}f\diff \mu = 0
$.
\item
If the pair $(f,\mu)$ admits an antiderivative, then the set of antiderivatives of $(f,\mu)$ is an affine space whose underlying vector space is the relative homology group $\Hom_1(\Gamma, \Gamma^d).$
\end{longenum}
\end{proposition}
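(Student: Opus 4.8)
The plan is to turn the analytic statement into linear algebra on the graph and then read off the answer homologically. By the Newton--Leibniz property~(i) of Proposition~\ref{circProperties}, on each solid edge $e$ a circulation function is determined by its value at one interior point, $\circulation(x)=\circulation(x_0)+\int_{x_0}^{x}f\diff\mu$; consequently there is no obstruction to property~(i) alone, and one may build an auxiliary function $\circulation_0$ on $\Gamma^s\setminus V(\Gamma^s)$ by integrating $f\diff\mu$ along each edge from an arbitrary constant. Before anything else I would check, from the vertex asymptotics in Table~\ref{table:types}, that $\int f\diff\mu$ converges up to each vertex --- the $\ln|f|$ and $\sqrt{|f|}$ factors are integrable against $f$ because $\psi(0)=0$ --- so that the one-sided limits $\lim_{x\to v}\circulation_0(x)$ exist along every edge and the Kirchhoff expressions in~(ii) are meaningful.

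The remaining freedom is the additive constant $a_e$ on each solid edge, and enforcing the Kirchhoff rule is a linear condition on the $a_e$. For a chosen $\circulation_0$ put $D(v):=\sum_{e\to v}\lim_{x\to v}\circulation_0(x)-\sum_{e\leftarrow v}\lim_{x\to v}\circulation_0(x)$ at each vertex $v$ of $\Gamma^s$ that does not lie on $\Gamma^d$. Replacing $\circulation_0$ by $\circulation_0+a_e$ on $e$ shifts $D(v)$ by $+a_e$ when $e$ points at $v$ and by $-a_e$ when $e$ points away, so killing all defects is exactly the equation $(\partial a)|_{V_0}=-D$, where $\partial\colon C_1(\Gamma^s)\to C_0(\Gamma^s)$ is the boundary map (head minus tail), $V_0$ is the set of solid vertices off $\Gamma^d$, and $|_{V_0}$ forgets the values at the vertices carrying a dashed edge. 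The key computation, valid for any $\circulation_0$, is the telescoping identity $\sum_{v\in V(\Gamma^s)}D(v)=\sum_e\Delta_e=\int_{\Gamma^s}f\diff\mu$ with $\Delta_e:=\int_e f\diff\mu$, since each edge contributes its head value once positively and its tail value once negatively.

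For part~(i), when $\Gamma^d\ne\emptyset$ the connectedness of $\Gamma$ forces each connected component of $\Gamma^s$ to contain a vertex carrying a dashed edge: a component with no such vertex would have no edges leaving it, hence be a connected component of $\Gamma$ and therefore all of $\Gamma$, contradicting $\Gamma^d\ne\emptyset$. Since the image of $\partial$ is precisely the space of $0$-chains whose total weight vanishes on each component of $\Gamma^s$, I can prescribe $-D$ freely on $V_0$ and absorb the required per-component balance into the unconstrained vertices lying on $\Gamma^d$; thus $(\partial a)|_{V_0}=-D$ is always solvable and an antiderivative exists. For part~(ii), when $\Gamma^d=\emptyset$ we have $V_0=V(\Gamma^s)$ and no free vertices, so $-D$ must itself lie in the image of $\partial$; as $\Gamma$ is connected this happens if and only if $\sum_v D(v)=0$, which by the telescoping identity is exactly $\int_\Gamma f\diff\mu=0$ (applied to a genuine antiderivative the same identity yields the necessity of this condition).

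For part~(iii), the difference $D:=\circulation_1-\circulation_2$ of two antiderivatives has zero $\mu$-derivative, hence is a constant $D_e$ on each solid edge, and obeys the homogeneous Kirchhoff relation $\sum_{e\to v}D_e=\sum_{e\leftarrow v}D_e$ at every $v\in V_0$. Reading the constants as a $1$-chain $\sum_e D_e\,e\in C_1(\Gamma^s)\subset C_1(\Gamma)$, this says exactly that $\partial(\sum_e D_e\,e)$ is supported on the vertices of $\Gamma^d$, i.e.\ that it is a relative cycle of the pair $(\Gamma,\Gamma^d)$; since a graph carries no $2$-chains there are no relative boundaries, so the relative cycles are precisely $\Hom_1(\Gamma,\Gamma^d)$, and conversely any such chain, read as edge-constants, preserves~(i) and~(ii) when added to an antiderivative. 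Hence the antiderivatives form an affine space over $\Hom_1(\Gamma,\Gamma^d)$. I expect the only real friction to be the bookkeeping that fuses Newton--Leibniz and Kirchhoff into the single condition $(\partial a)|_{V_0}=-D$, together with the verification that the vertex limits exist; once that is in place, all three parts fall out of the telescoping identity and the elementary kernel/image description of the boundary map of a connected graph.
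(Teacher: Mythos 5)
Your argument is correct, and it is genuinely more self-contained than what the paper does: the paper's entire proof is a one-line reduction to Proposition~3.12 of the cited work of Izosimov--Khesin, applied to $\Gamma^s$ with the vertices lying on $\Gamma^d$ declared to be ``boundary vertices'' at which no Kirchhoff condition is imposed. Your proof implements exactly that reduction by hand: the observation that the Newton--Leibniz property fixes $\circulation$ on each solid edge up to a constant $a_e$, that the Kirchhoff conditions become the linear system $(\partial a)|_{V_0}=-D$ for the graph boundary map of $\Gamma^s$, and that the vertices of $\Gamma^s\cap\Gamma^d$ are unconstrained, is precisely the mechanism hidden behind the citation. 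The telescoping identity $\sum_v D(v)=\int_{\Gamma^s}f\,\diff\mu$ correctly produces the obstruction in part (ii) and its vanishing role in part (i) (via the connectedness argument showing every component of $\Gamma^s$ meets $\Gamma^d$ when $\Gamma^d\neq\emptyset$), and your identification of the homogeneous solution space $\{c\in C_1(\Gamma^s):(\partial c)|_{V_0}=0\}$ with $\Hom_1(\Gamma,\Gamma^d)$ is right, since modding out by $C_1(\Gamma^d)$ lets one represent every relative cycle by a chain supported on solid edges and there are no relative boundaries in degree one. Two small points of care that you flag and that are indeed needed: the one-sided limits of $\circulation_0$ at vertices exist simply because $f$ is bounded and $\mu$ is a finite nonatomic measure on each edge (the $\ln|f|$ and $\sqrt{|f|}$ asymptotics are only needed to see $\mu([v,x])\to 0$); and at univalent vertices of $\Gamma^s$ (type VII) the Kirchhoff rule degenerates to forcing the limit of $\circulation$ to vanish, which your formalism handles automatically since $\partial$ of a pendant edge is $\pm 1$ at that vertex. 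What your route buys is transparency and independence from the external reference; what the paper's route buys is brevity and consistency of conventions with the closed-surface case.
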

\begin{proof}
To prove this result one applies Proposition~$3.12$ in~\cite{izosimov2016characterization} to the graph $\Gamma^s,$ with the set of boundary vertices defined as those vertices that belong to $\Gamma^d.$
\end{proof}
\subsection{Auxiliary classification result}
In this subsection we follow~\cite{izosimov2016characterization}. Let $(M,\omega)$ be a symplectic surface with boundary $\partial M.$ Denote by $CB(M)\subset C^\infty(M)$ the space of Morse functions on $M$ constant on the boundary $\partial M,$ and without critical points on the boundary $\partial M.$ Elements of $CB(M)$ are called functions of $CB-$type. 
\begin{definition}
A coset $[\alpha]\in \Omega^1(M)/\diff \Omega^0(M)$ is said to be of
\emph{$CB$-type} 
if 
$
\Diff[\alpha] \in CB(M).
$ 
A coadjoint orbit $\pazocal O$ called to be of
\emph{$CB$-type} 
if some (and hence every) coset $[\alpha]\in \pazocal O$ is of $CB$-type. 
\end{definition}
All definitions from the present paper such as Reeb graph, compatibility conditions, circulation graph, etc. can be modified for the case of functions and cosets of $CB-$type, see details in \cite{izosimov2016characterization}. The result we are interested in can be formulated as follows.
\begin{theorem}[\cite{izosimov2016characterization}]
\label{classification_of_orbitsCB}
Let $M$ be a connected symplectic surface with or without boundary. Then coadjoint orbits of $\SDiff(M)$ of $CB-$type are in one-to-one correspondence with (isomorphism classes of) circulation graphs  
$(\Gamma, f, \mu, \circulation)$  compatible with $M$.
In other words, the following statements hold:
\begin{longenum}
\item For a symplectic surface $M$ and cosets of $CB-$type $[\oneform], [\oneformtwo] \in \SVect^*(M)$ 
the following conditions are equivalent:
\begin{longenum} \item $[\oneform]$ and $ [\oneformtwo]$ lie in the same orbit of the $\SDiff(M)$ coadjoint action;  \item circulation graphs $\Gamma_{[\oneform]}$ and $\Gamma_{[\oneformtwo]}$ corresponding to the cosets $[\oneform]$ and $[\oneformtwo]$ 
are isomorphic.\end{longenum}
\item For each circulation graph $\Gamma$ which is compatible 
with $M$, there exists a generic $[\oneform] \in \SVect^*(M)$ such that  $\Gamma_{[\oneform]} =(\Gamma, f, \mu, \circulation).$
\end{longenum}
\end{theorem}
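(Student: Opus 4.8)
The plan is to reduce the statement to the function-classification Theorem~\ref{classification_of_functions_symplectic}, handling the vorticity function with that result and then treating the remaining closed-but-not-exact part of a coset by means of the circulation data from Propositions~\ref{circProperties}~and~\ref{circFuncs}. The direction $(a)\implies(b)$ of~(i) is immediate: if $\Phi\in\SDiff(M)$ satisfies $[\Phi^*\oneformtwo]=[\oneform]$ then $\Phi^*\Diff[\oneformtwo]=\Diff[\oneform]$, so $\Phi$ intertwines the vorticity functions and, by Theorem~\ref{classification_of_functions_symplectic}, induces an isomorphism of measured Reeb graphs; since $\circulation_{[\oneform]}$ is defined by integrating a representative over the oriented circle fibres $\pi^{-1}(x)$ and $\Phi$ sends fibres to fibres preserving their boundary orientation, this isomorphism also matches the circulation functions, so the circulation graphs are isomorphic.

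For the converse $(b)\implies(a)$ I would start from an isomorphism $\phi$ of circulation graphs, which in particular is an isomorphism of measured Reeb graphs, and use Theorem~\ref{classification_of_functions_symplectic} to lift it to $\Phi\in\SDiff(M)$ with $\Phi^*\Diff[\oneformtwo]=\Diff[\oneform]$ and $\pi_{\oneformtwo}\circ\Phi=\phi\circ\pi_{\oneform}$. Replacing $\oneformtwo$ by $\Phi^*\oneformtwo$, I may assume $\Diff[\oneform]=\Diff[\oneformtwo]=:F$ with a common projection, and that $\circulation_{[\oneform]}=\circulation_{[\oneformtwo]}$ on $\Gamma^s$. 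Then $\oneformthree:=\oneformtwo-\oneform$ is closed and its period $\int_{\pi^{-1}(x)}\oneformthree=\circulation_{[\oneformtwo]}(x)-\circulation_{[\oneform]}(x)$ over every solid fibre vanishes, so $[\oneformthree]\in\Hom^1(M)$ annihilates the subspace of $\Hom_1(M)$ spanned by the circle fibres. The remaining task is to kill $[\oneformthree]$ by an element of the stabilizer $\Stab(F)\subset\SDiff(M)$: I would realise each remaining independent cycle of $\Hom_1(M)$ by the flow of a Hamiltonian in the Lie algebra of $\Stab(F)$, tune the flow times so as to shift the corresponding periods of $\oneformtwo$ by the required amounts, and assemble these deformations into a single $\Psi\in\Stab(F)$ via Moser's path method. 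After this, $\Psi^*\oneformtwo-\oneform$ is closed with trivial cohomology class, hence exact, so $\Phi\circ\Psi$ carries $[\oneformtwo]$ to $[\oneform]$ and the two cosets share an orbit.

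For the realisation statement~(ii) I would first apply part~(ii) of Theorem~\ref{classification_of_functions_symplectic} to obtain $(M,\omega)$ and a simple Morse function $F$ whose measured Reeb graph is $(\Gamma,f,\mu)$, and then construct a closed $1$-form $\oneform$ with $\diff\oneform=F\,\omega$ whose fibre periods over $\Gamma^s$ equal the prescribed $\circulation$. The existence of such a form is exactly the content of the compatibility of $\circulation$: the Newton--Leibniz and Kirchhoff conditions of Proposition~\ref{circProperties}, solvable by Proposition~\ref{circFuncs}, guarantee that the prescribed circulation is consistent, and simplicity of $F$ makes the resulting coset generic.

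The hard part will be the penultimate step of the converse, namely showing that $\Stab(F)$ acts on $\Hom^1(M)$ richly enough to absorb any class that pairs trivially with the solid fibres, and carrying out the Moser deformation \emph{inside} the stabilizer while keeping $F$ constant on $\partial M$. This is where the genus, the dashed subgraph, and the boundary cycles genuinely enter, and where the identification of the affine space of antiderivatives with $\Hom_1(\Gamma,\Gamma^d)$ in Proposition~\ref{circFuncs} provides the homological bookkeeping.
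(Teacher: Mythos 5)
First, a point of orientation: the paper does not prove Theorem~\ref{classification_of_orbitsCB} at all --- it is imported verbatim from \cite{izosimov2016characterization} and used as a black box in the proof of Theorem~\ref{classification_of_orbits}. So there is no in-paper proof to match against; the fair comparison is with the strategy of that reference and with the paper's own proof of the more general Theorem~\ref{classification_of_orbits}, and your outline does follow essentially that strategy: reduce to equal vorticities via the function-classification theorem, observe that the difference of cosets is closed with vanishing fibre periods, and then absorb the residual cohomology class by a symplectomorphism preserving $F$.

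Three concrete issues. First, you invoke Theorem~\ref{classification_of_functions_symplectic} to lift the graph isomorphism, but $CB$-type functions are \emph{not} simple Morse in the sense of Definition~\ref{def:simpleMorse}: they are constant on each boundary component, which violates conditions (iii)--(iv). You need the $CB$-analogue of the function-classification theorem (which is what \cite{izosimov2016characterization} actually proves); the statement is parallel but not literally the one in this paper, and for $CB$-type the Reeb graph has only solid edges with $1$-valent ``boundary vertices'', so the circulation function already records the boundary circulations and no $\xi$-invariant is needed --- this is worth saying explicitly, since it is exactly why the augmented data of Section~\ref{section:orbits} does not appear here. Second, the step you defer as ``the hard part'' is indeed the entire content of the converse, and ``realise each independent cycle by a Hamiltonian flow and tune the flow times'' is not yet an argument: the clean mechanism (used in the paper's proof of Theorem~\ref{classification_of_orbits} via Lemma~\ref{graphCohomology}) is to show that any closed $\gamma$ with vanishing integrals over all connected components of $F$-levels satisfies $[\gamma]=[H\,\diff F]$ for a function $H$ descending to the Reeb graph with $H/F$ smooth, and then to take the time-one flow of the symplectic (generally non-Hamiltonian) field $X=(H/F)\,\omega^{-1}\diff F$; the smoothness of $H/F$ near critical levels and the surjectivity of $h\mapsto[H\,\diff F]$ onto the annihilator of the fibre classes are the points that actually require the homological bookkeeping you allude to. Without some version of this lemma your Moser argument has nothing to integrate. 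Third, a slip in part (ii): a ``closed $1$-form $\oneform$ with $\diff\oneform=F\,\omega$'' is a contradiction in terms; you mean a $1$-form with $\diff\oneform=F\,\omega$, whose existence on a closed surface already requires $\int_M F\,\omega=0$, i.e.\ precisely the solvability condition of Proposition~\ref{circFuncs}(ii), after which the prescribed circulation is reached by adding a closed form realising the required class in $\Hom_1(\Gamma,\Gamma^d)$.
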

\subsection{Augmented circulation graph}
In the case of surfaces with boundary circulation functions do not form a complete set of invariants for coadjoint orbits, i.e. the equality $C_{[\alpha]}=C_{[\beta]}$ does not in general  imply that cosets $\alpha$ and $\beta$ belong to the same coadjoint orbit. 
\begin{example}
Consider the disk with two holes from Figure~\ref{fig:dashed_Reeb}(a). In this case there are no circulation functions since there are no solid edges in the Reeb graph. On the other hand, in this case there are no nontrivial symplectomorphisms preserving the function hence the dimension of the space of coadjoint orbits is equal to the first Betti number of the surface, i.e. it is equal to two.
\end{example}
It turns out that it is possible to define some additional invariants: integrals of cosets over certain cycles associated with the pair $(M,F)$ in an invariant way. 
\par
There is a unique way to lift each edge $e\subset \Gamma_F^d$ to a smooth oriented (and diffeomorphic to a segment) curve  $\tilde{e}\subset \partial M$ such that 
\begin{longenum}
\item $\pi(\tilde{e})=e;$
\item for each $x\in e\setminus \partial e$ the regular $F$-level $\pi^{-1}(x)$ is pointed in the direction of the curve $\tilde{e}.$
\end{longenum}
We define the subset $\tilde{E}_F\subset M$ to be the union
$$
\tilde{E}_F:=\bigcup_{e\in E(\Gamma^d_F)}\tilde{e}.
$$ 
We also define the subset $\tilde{V}_F\subset M$ to be 
$$
\tilde{V}_F:=\pi^{-1}(V(\Gamma^d_F)\setminus \partial \Gamma^d_F)
$$
where $\partial \Gamma^d_F$ is the set of boundary vertices (i.e. vertices of types I, III, or V) of the graph $\Gamma^d_F.$ And, finally, define the subset $\tilde{\Gamma}_F$ to be the union of $\tilde{E}_F$ and $\tilde{V}_F$ (see Figure \ref{fig:embedded_graph}).
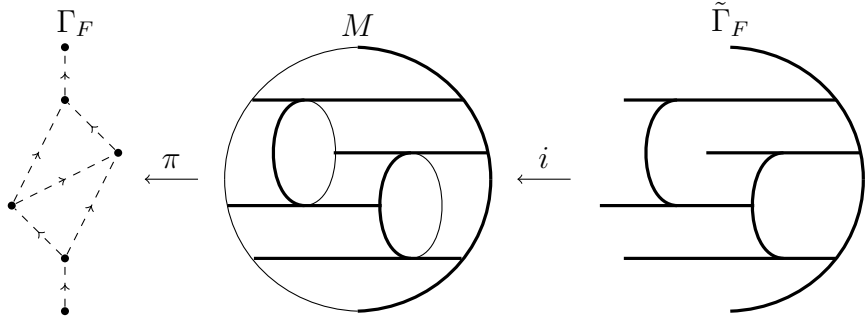
\begin{figure}
    \centering
    \begin{tikzpicture}[scale = 0.7]
    \tikzstyle{subj} = [circle, minimum width=3pt, fill, inner sep=0pt]
    \draw (5.7,5) node[above] {$\Gamma_F$};
    \draw (11,5) node[above] {$M$};
    \draw (18,5) node[above] {$\tilde{\Gamma}_F$};
    \node[subj] (nodeA) at (5.5,0) {};
    \node[subj] (nodeB) at (5.5,1) {};
    \node[subj] (nodeC) at (4.5,2) {};
    \node[subj] (nodeD) at (6.5,3) {};
    \node[subj] (nodeE) at (5.5,4) {};
    \node[subj] (nodeF) at (5.5,5) {};
    \draw[dashed, 
    decoration={markings, mark=at position 0.5 with {\arrow{>}}},
    postaction={decorate}]
    (nodeA) -- (nodeB);
    \draw[dashed, 
    decoration={markings, mark=at position 0.5 with {\arrow{>}}},
    postaction={decorate}]
    (nodeB) -- (nodeC);
    \draw[dashed, 
    decoration={markings, mark=at position 0.5 with {\arrow{>}}},
    postaction={decorate}] 
    (nodeB) -- (nodeD);
    \draw[dashed, 
    decoration={markings, mark=at position 0.5 with {\arrow{>}}},
    postaction={decorate}] 
    (nodeC) -- (nodeE);
    \draw[dashed, 
    decoration={markings, mark=at position 0.5 with {\arrow{>}}},
    postaction={decorate}] 
    (nodeC) -- (nodeD);
    \draw[dashed, 
    decoration={markings, mark=at position 0.5 with {\arrow{>}}},
    postaction={decorate}] 
    (nodeD) -- (nodeE);
    \draw[dashed, 
    decoration={markings, mark=at position 0.5 with {\arrow{>}}},
    postaction={decorate}] 
    (nodeE) -- (nodeF);
    \draw[<-] (7,2.5) -- (8,2.5) node[above, midway] {$\pi$};
    \draw[<-] (14,2.5) -- (15,2.5) node[above, midway] {$i$};
    fill [opacity = 0.1] (1+10, 2.5) ellipse (2.5 and 2.5);
    \draw[very thick] (9.05,1) -- (12.95,1);
    \draw[very thick] (10-1.45,2) -- (11.45,2);
    \draw[very thick] (10.55,3) -- (13.45,3);
    \draw[very thick] (9.02,4) -- (13,4);
    \draw[very thick] (11,0) to [out=2, in=-90] (13.5,2.5);
    \draw[very thick] (13.5,2.5) to [out=90, in=-2] (11,5);
    \draw (11,0) to [out=180-2, in=-90] (11-2.5,2.5);
    \draw (11-2.5,2.5) to [out=90, in=180+2] (11,5);
    \draw (12,1) to [out=2, in=-2] (12,3);
    \draw[very thick] (12,1) to [out=180-2, in=180+2] (12,3);
    \draw (10,2) to [out=2, in=-2] (10,4);
    \draw[very thick] (10,2) to [out=180-2, in=180+2] (10,4);
    \draw[very thick] (7+9,1) -- (7+13,1);
    \draw[very thick] (7+10-1.45,2) -- (7+11.45,2);
    \draw[very thick] (7+10.55,3) -- (7+13.45,3);
    \draw[very thick] (7+9,4) -- (7+13,4);
    \draw[very thick] (7+11,0) to [out=2, in=-90] (7+13.5,2.5);
    \draw[very thick] (7+13.5,2.5) to [out=90, in=-2] (7+11,5);
    \draw[very thick] (7+12,1) to [out=180-2, in=180+2] (7+12,3);
    \draw[very thick] (7+10,2) to [out=180-2, in=180+2] (7+10,4);
\end{tikzpicture}
    \caption{An illustration to the definition of the graph $\tilde{\Gamma}_F.$}
    \label{fig:embedded_graph}
\end{figure}
The set $\tilde{\Gamma}_F$ is a topological graph embedded into the surface $M.$ We denote by $i$ the inclusion
$\tilde{\Gamma}_F\xhookrightarrow{} M.$
\begin{lemma}\label{lemma_hom_eqv}
The map $\pi\circ i\colon\tilde{\Gamma}_F\to\Gamma_F^d$ is a homotopy equivalence.
\end{lemma}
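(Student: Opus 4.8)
\section*{Proof proposal}

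The plan is to exhibit $\pi\circ i$ as a quotient map that collapses a finite family of pairwise disjoint \emph{contractible} subgraphs to points, and then to invoke the standard fact that collapsing a contractible subcomplex of a CW pair is a homotopy equivalence. This is clean because $\tilde{\Gamma}_F$ is a finite (hence compact) topological graph and $\Gamma_F^d$ is Hausdorff, so any continuous surjection between them that is injective away from a union of fibres is automatically a quotient map identifying exactly those fibres. The whole argument then reduces to understanding $\pi\circ i$ locally, edge by edge and vertex by vertex, using the normal forms of Theorems~\ref{theorem:md} and \ref{theorem:md_boundary} and the list in Table~\ref{table:types}.

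First I would analyse the map on edges. For a dashed edge $e$, the lift $\tilde{e}\subset\partial M$ meets each regular level $\pi^{-1}(x)$, $x\in e$, in exactly one boundary endpoint of the corresponding segment, so $\tilde{e}$ is a section of $\pi$ over $e$ and $\pi|_{\tilde{e}}\colon\tilde{e}\to e$ is a continuous bijection of arcs, hence a homeomorphism. Thus $\pi\circ i$ carries each edge of $\tilde{E}_F$ homeomorphically onto the corresponding edge of $\Gamma_F^d$. Next I would identify $\tilde{V}_F$: since $V(\Gamma^d_F)$ consists of the vertices of types I--V while $\partial\Gamma^d_F$ consists of those of types I, III, V, the set $V(\Gamma^d_F)\setminus\partial\Gamma^d_F$ is exactly the set of type~II and type~IV vertices, and $\tilde{V}_F$ is the disjoint union of the singular fibres $\pi^{-1}(v)$ over these. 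I would then show each such fibre is a \emph{tree}, hence contractible: because all edges incident to $v$ are dashed, no local separatrix of the singular level closes into a circle, so the fibre is a union of arcs meeting at the critical point(s). Concretely, for a type~IV vertex (interior saddle) the fibre is the four-pronged star $\{pq=0\}$ joining the saddle to four boundary points, and for a type~II vertex (boundary maximum of $F|_{\partial M}$) it is the single arc $\{q=p^2\}$ through the critical point with its two ends on $\partial M$.

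The crux, and where the bookkeeping must be done carefully, is the local matching at the vertices, and in particular the reason the construction deliberately takes full fibres \emph{only} at types II and IV. At a boundary vertex I would verify from the local models that the lifts of the incident dashed edges already assemble into the correct local graph without any fibre being added: for types~I and III the lift(s) terminate at the critical point, giving a $1$-valent vertex, and for type~V the two dashed lifts $\tilde{e}_1,\tilde{e}_2$ continue smoothly through a \emph{regular} boundary point, giving a $2$-valent vertex. In each case $\pi\circ i$ is a local homeomorphism at $v$, matching the valence of $v$ in $\Gamma^d_F$. This is essential, because the actual singular fibres at types~III and V contain a circle (a pinched loop) and are \emph{not} contractible; collapsing them would destroy a generator of $H_1$, which is precisely why $\partial\Gamma^d_F$ is excluded from $\tilde{V}_F$. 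At an interior vertex (type~II or IV) I would instead check that the incident lifts attach to the contractible fibre $\pi^{-1}(v)$ and that collapsing this fibre recovers $v$ with its correct edge-incidences; here the full fibre, not merely the lifts, is needed in order to connect the lift over the edge lying on the far side of the critical value to the remaining lifts.

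Granting these verifications, $\pi\circ i$ is a continuous surjection from a compact space to a Hausdorff space, injective off $\tilde{V}_F$ and collapsing each component of $\tilde{V}_F$ to a point; hence it factors as the quotient of $\tilde{\Gamma}_F$ collapsing each fibre $\pi^{-1}(v)$ ($v$ of type~II or IV) to a point, followed by a homeomorphism onto $\Gamma_F^d$. Since these are finitely many pairwise disjoint contractible subgraphs, collapsing them one at a time is a homotopy equivalence at each step, so the composite quotient map, and therefore $\pi\circ i$, is a homotopy equivalence. The main obstacle is thus not the abstract topology but the case-by-case local analysis, via the normal forms, confirming that the boundary lifts and the singular fibres assemble in $\tilde{\Gamma}_F$ into exactly the collapse of the type~II and type~IV vertex fibres of $\Gamma_F^d$ --- and, crucially, that nothing is collapsed at the boundary vertices of types I, III, and V.
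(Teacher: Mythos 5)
Your proposal is correct and follows essentially the same route as the paper: both arguments factor $\pi\circ i$ through the quotient of $\tilde{\Gamma}_F$ that collapses each connected component of $\tilde{V}_F$ (a contractible singular fibre over a type~II or IV vertex) to a point, observe that this collapse is a homotopy equivalence, and identify the resulting quotient with $\Gamma_F^d$ via a homeomorphism. Your version merely spells out in more detail the edge-by-edge and vertex-by-vertex local checks (and the reason types I, III, V are excluded from $\tilde{V}_F$) that the paper's proof leaves implicit.
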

\begin{proof}
Consider the graph $\tilde{\Gamma}_F/\tilde{V}_F$ obtained from $\tilde{\Gamma}_F$ by contracting each connected component of a singular $F$-level in $\tilde{\Gamma}_F$ to a point. Denote by $p$ the projection $\tilde{\Gamma}_F\to\tilde{\Gamma}_F/\tilde{V}_F.$ The map $p$ is a homotopy equivalence since each singular $F$-level in $\tilde{\Gamma}_F$ is connected and simply connected. The map
$i\circ \pi$ factors (in a unique way) through $\tilde{\Gamma}_F/\tilde{V}_F,$ i.e. there exists a unique map $\tilde{\pi}:\tilde{\Gamma}_F/\tilde{V}_F\to\Gamma_F^d$ such that
$
\pi\circ i=\tilde{\pi}\circ p \circ i.
$
The map $\tilde{\pi}$ is a homeomorphism. We conclude that the map $p\circ i$ is a homotopy equivalence as a composition of the inclusion $i,$ the homotopy equivalence $p,$ and the homeomorphism $\tilde{\pi}.$ 
\end{proof}
Let $[\alpha]\in\Omega^1(M)/\diff \Omega^0(M)$ be a coset of a one-form.
There is a natural way to define the restriction $i^*[\alpha]\in\Hom^1(\tilde{\Gamma}_F).$ 
First, we define the restriction $i^*\alpha$ as a one-cochain such that 
$
i^*\alpha(e):=\int_{e}\alpha
$
for each edge $e\subset \tilde{\Gamma}_F.$
Now we take $i^*[\alpha]:=[i^*\alpha].$ The cohomology class $i^*[\alpha]$ is well-defined since each exact one-form $\diff f$ restricts to the exact one-cochain  $i^*\diff f.$ It follows from Lemma~\ref{lemma_hom_eqv} that 
$
i^*\circ \pi^*\colon\Hom^1(\Gamma_F)\to\Hom^1(\tilde{\Gamma}_F)
$ 
is an isomorphism. Hence with each coset 
$
[\alpha]\in \Omega^1(M)/\diff \Omega^0(M)
$
we can also associate an element $
\xi_{[\alpha]}\in\Hom^1(\Gamma_F)
$ 
defined by the formula
$
\xi_{[\alpha]}:=(i^*\circ \pi^*)^{-1}(i^*[\alpha]).
$
Next, we generalize the notion of a circulation graph from~\cite{izosimov2016coadjoint}. 
\begin{definition}
A measured Reeb graph $(\Gamma, f,\mu)$ endowed with a circulation function $\pazocal{C}$ and an element 
$
\xi \in \Hom^1(\Gamma^d)
$
is called a \emph{augmented circulation graph} 
$
(\Gamma ,f,\mu, \circulation,\xi).
$
\end{definition}
We demonstrated above that with each coset $[\alpha]$ one can associate an augmented circulation graph $\Gamma_{[\alpha]}.$
Two augmented circulation graphs are isomorphic if they
are isomorphic as measured Reeb graphs, and the isomorphism between them preserves all additional data. An augmented circulation graph 
$
(\Gamma ,f,\mu, \circulation,\xi)
$
is \emph{compatible} with a symplectic surface $(M,\omega)$ if the corresponding measured Reeb graph $(\Gamma ,f,\mu)$ is compatible with $(M,\omega)$ (see Definition~\ref{def:compatible}).
\subsection{Coadjoint orbits of symplectomorphism groups}
\begin{theorem}\label{thm4}\label{classification_of_orbits}
Let $(M,\omega)$ be a connected symplectic surface with or without boundary. Then generic coadjoint orbits of $\SDiff(M)$ are in one-to-one correspondence with (isomorphism classes of) augmented circulation graphs  
$(\Gamma, f, \mu, \circulation)$  compatible with $M$.
In other words, the following statements hold:
\begin{longenum}
\item For a symplectic surface $M$ and generic cosets $[\oneform], [\oneformtwo] \in \SVect^*(M)$ 
the following conditions are equivalent:
\begin{longenum} \item $[\oneform]$ and $ [\oneformtwo]$ lie in the same orbit of the $\SDiff(M)$ coadjoint action;  
\item augmented circulation graphs $\Gamma_{[\oneform]}$ and $\Gamma_{[\oneformtwo]}$ corresponding to the cosets $[\oneform]$ and $[\oneformtwo]$ 
are isomorphic.\end{longenum}
\item For each augmented circulation graph $\Gamma$ which is compatible with $M$, there exists a generic $[\oneform] \in \SVect^*(M)$ such that  $\Gamma_{[\oneform]} =(\Gamma, f, \mu, \circulation, \xi)$.
\end{longenum}
\end{theorem}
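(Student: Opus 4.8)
The plan is to peel off the data of the augmented circulation graph one layer at a time, reducing matters to a linear question about the closed one-form $\oneformtwo-\oneform$, and following the same overall scheme as the $CB$-type classification of Theorem~\ref{classification_of_orbitsCB}. The implication $(a)\Rightarrow(b)$ of the first statement is pure naturality: since $\Ad^*_\Phi[\oneform]=[\Phi^*\oneform]$, and the vorticity $\Diff[\cdot]$, the circulation $\circulation_{[\cdot]}$ and the class $\xi_{[\cdot]}$ are all built by integrating a representative over the fibres $\pi^{-1}(x)$ and over the embedded graph $\tilde\Gamma_F$, a symplectomorphism carrying $[\oneform]$ to $[\oneformtwo]$ carries the augmented circulation graph of $[\oneform]$ onto that of $[\oneformtwo]$. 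For the converse $(b)\Rightarrow(a)$, I would first use only the underlying isomorphism $\phi$ of measured Reeb graphs: by Theorem~\ref{classification_of_functions_symplectic} it lifts to a symplectomorphism $\Phi$ with $\Phi^*\Diff[\oneformtwo]=\Diff[\oneform]$. Replacing $\oneformtwo$ by $\Phi^*\oneformtwo$, I may assume the two cosets share the vorticity $F:=\Diff[\oneform]=\Diff[\oneformtwo]$ and the Reeb graph $\Gamma_F$, and --- since $\Phi$ realises $\phi$ and $\phi$ preserves the extra data --- that $\circulation_{[\oneform]}=\circulation_{[\oneformtwo]}$ and $\xi_{[\oneform]}=\xi_{[\oneformtwo]}$. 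Then $\theta:=\oneformtwo-\oneform$ satisfies $\diff\theta=(F-F)\omega=0$, so $[\theta]\in\Hom^1(M)$, and it remains only to annihilate $[\theta]$ inside a single coadjoint orbit.

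The core is a decomposition of $\Hom^1(M)$ suited to the splitting $M=M^s_F\cup M^d_F$. Equality of the circulations says exactly that the fibre periods of $[\theta]$ over the circles $\pi^{-1}(x)$, $x\in\Gamma^s_F$, assemble into the zero class $\circulation_{[\oneform]}-\circulation_{[\oneformtwo]}=0\in\Hom_1(\Gamma_F,\Gamma^d_F)$ (Propositions~\ref{circProperties} and~\ref{circFuncs}); equality of the augmentations says $i^*[\theta]=0$, which by Lemma~\ref{lemma_hom_eqv} kills the image of $[\theta]$ in $\Hom^1(\Gamma^d_F)\cong\Hom_1(\Gamma^d_F)$. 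What remains are the \emph{horizontal} classes carried by the solid cylinders, and these I would remove using the stabiliser of $F$: a flow rotating the fibres of each solid cylinder $\pi^{-1}(e)$, $e\subset\Gamma^s_F$, at a rate that is a (locally defined) function of $F$ preserves $F$, can be kept supported away from $M^d_F$ so that it moves neither $\circulation$ nor $\xi$, and has flux ranging over precisely this last summand --- the flux duality between vertical fibres and horizontal sections, and the only mechanism already present in the closed case of~\cite{izosimov2016coadjoint}. Choosing $\Psi\in\Stab(F)$ with $\Flux(\Psi)=[\theta]$ makes $\Psi^*\oneformtwo-\oneform$ exact, so $[\Psi^*\oneformtwo]=[\oneform]$ and the two cosets lie on a single orbit.

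For the second statement I would argue by construction. Theorem~\ref{classification_of_functions_symplectic}$(ii)$ yields a simple Morse function $F$ on $M$ whose measured Reeb graph is $(\Gamma,f,\mu)$; here the compatibility conditions of Definition~\ref{def:compatible} are exactly what is needed for such an $F$ to exist with the correct area, genus and number of boundary components. The surjectivity of $\Diff$ (onto zero-mean functions when $\partial M=\varnothing$) then provides some coset $[\oneform_0]$ with $\Diff[\oneform_0]=F$; its invariants $\circulation_0$ and $\xi_0$ may differ from the prescribed $\circulation$ and $\xi$, but the difference lies in $\Hom_1(\Gamma,\Gamma^d)\oplus\Hom^1(\Gamma^d)$, and by the decomposition above I can add to $\oneform_0$ a closed one-form with exactly those periods. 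Proposition~\ref{circFuncs} guarantees that the prescribed $\circulation$ is a legitimate antiderivative of $(f,\mu)$, so the correction is consistent, and the resulting coset has augmented circulation graph $(\Gamma,f,\mu,\circulation,\xi)$.

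The step I expect to be the real obstacle is the homological bookkeeping behind the decomposition $\Hom^1(M)\cong\Hom_1(\Gamma_F,\Gamma^d_F)\oplus\Hom_1(\Gamma^d_F)\oplus(\text{horizontal})$: one must verify that $\circulation$, $\xi$ and the stabiliser flux really land in three \emph{complementary} subspaces, and in particular that the fibre-rotation flux can be chosen to destroy the horizontal solid classes \emph{without} disturbing the dashed periods that $\xi$ records. This is precisely where the embedded graph $\tilde\Gamma_F$ and the homotopy equivalence of Lemma~\ref{lemma_hom_eqv} carry the argument, and where surfaces with boundary genuinely depart from the closed case, in which the dashed part --- and hence the invariant $\xi$ --- is absent.
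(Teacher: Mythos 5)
Your proposal is correct and follows essentially the same route as the paper: part (i) lifts the Reeb-graph isomorphism via Theorem~\ref{classification_of_functions_symplectic}, reduces to a closed difference form $\gamma$ with vanishing fibre periods and vanishing $\xi$, and kills it with a fibre-rotation field in the stabiliser of $F$ (the paper's $X=\tfrac{H}{F}\,\omega^{-1}\diff F$, with Lemma~\ref{graphCohomology} supplying exactly the complementarity you flag as the real obstacle). The only organisational difference is in part (ii), where the paper realises the prescribed circulation by cutting $M^s_F$ down to a $CB$-type piece and invoking Theorem~\ref{classification_of_orbitsCB} before patching with a partition of unity, whereas you correct an arbitrary coset of vorticity $F$ by adding a closed one-form with prescribed periods --- the same homological fact packaged differently.
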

\begin{corollary}
The space of coadjoint orbits of the group $\SDiff(M)$ corresponding to the same measured Reeb graph $(\Gamma,f,\mu)$ is a finite-dimensional affine space and its dimension is $\dim \Hom_1(\Gamma,\Gamma^d) + \dim \Hom_1(\Gamma^d).$
\end{corollary}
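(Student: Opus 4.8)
The plan is to read the corollary off directly from Theorem~\ref{classification_of_orbits} by counting the extra data that upgrades a fixed measured Reeb graph to an augmented circulation graph. By that theorem, the generic coadjoint orbits whose vorticity has measured Reeb graph $(\Gamma, f, \mu)$ are in bijection with isomorphism classes of augmented circulation graphs $(\Gamma, f, \mu, \circulation, \xi)$ refining $(\Gamma, f, \mu)$. Thus the task reduces to describing the set of admissible pairs $(\circulation, \xi)$, where $\circulation$ is a circulation function for $(\Gamma, f, \mu)$ and $\xi \in \Hom^1(\Gamma^d)$, and to exhibiting its natural affine structure.

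First I would treat the two pieces of data separately. For the circulation function, Proposition~\ref{circFuncs} shows that the set of antiderivatives of $(f,\mu)$ is nonempty (automatically when $\Gamma^d \neq \emptyset$, and under the zero-mean condition $\int_\Gamma f \diff \mu = 0$ when $\Gamma^d = \emptyset$, which holds for any graph compatible with a closed $M$) and that it forms an affine space whose underlying vector space is $\Hom_1(\Gamma, \Gamma^d)$. For the second piece, $\xi$ ranges freely over the vector space $\Hom^1(\Gamma^d)$; since $\Gamma^d$ is a finite graph and all coefficients are taken in $\R$, one has $\Hom^1(\Gamma^d) \cong \Hom_1(\Gamma^d)^*$, so that $\dim \Hom^1(\Gamma^d) = \dim \Hom_1(\Gamma^d)$.

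Next I would combine the two. The parameter space of pairs $(\circulation, \xi)$ is the product of an affine space modelled on $\Hom_1(\Gamma, \Gamma^d)$ with the vector space $\Hom^1(\Gamma^d)$, and this product is again an affine space, modelled on the direct sum $\Hom_1(\Gamma, \Gamma^d) \oplus \Hom^1(\Gamma^d)$, of dimension $\dim \Hom_1(\Gamma, \Gamma^d) + \dim \Hom_1(\Gamma^d)$. The invariants $\circulation$ and $\xi$ are of independent origin --- the former records the periods and integrals along the solid part of the graph, while the latter records the integrals of the coset captured by $\Hom^1(\Gamma^d)$ via the homotopy equivalence $\tilde{\Gamma}_F \simeq \Gamma^d_F$ of Lemma~\ref{lemma_hom_eqv} --- so no constraint couples them and the direct-sum structure is genuine.

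The main obstacle will be bookkeeping the isomorphism-class quotient implicit in Theorem~\ref{classification_of_orbits}. Strictly speaking, two parameter values $(\circulation, \xi)$ and $(\circulation', \xi')$ determine the same orbit precisely when they are related by an automorphism of the measured Reeb graph $(\Gamma, f, \mu)$, so the orbit space is the quotient of the affine parameter space by the induced action of the group $\Aut(\Gamma, f, \mu)$. Since the Reeb graph is finite this group is finite, so the quotient leaves the dimension unchanged, which is why the stated formula holds; the clean affine-space description refers to the parameter space itself (equivalently, it is exact whenever the automorphism group acts trivially, as for a generic graph). Verifying that varying $(\circulation, \xi)$ parametrizes all such orbits with no identifications beyond automorphisms is exactly the injectivity and surjectivity content of Theorem~\ref{classification_of_orbits}, which I would invoke rather than reprove.
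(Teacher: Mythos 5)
Your proposal is correct and matches the derivation the paper intends: the corollary is stated without a separate proof as an immediate consequence of Theorem~\ref{classification_of_orbits}, with the affine structure on circulation functions coming from Proposition~\ref{circFuncs}(iii) and the $\xi$-component ranging freely over $\Hom^1(\Gamma^d)\cong\Hom_1(\Gamma^d)^*$. Your additional caveat about quotienting by the (finite) automorphism group of the measured Reeb graph is a reasonable point that the paper glosses over, but it does not change the dimension count.
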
 
\begin{remark}
It follows from the long exact sequence for the pair $(\Gamma,\Gamma^d)$ that 
$$
\dim \Hom_1(\Gamma,\Gamma^d) + \dim \Hom_1(\Gamma^d)=\dim \Hom_1(\Gamma) - \dim \Hom_0(\Gamma^d)+1.
$$
Therefore, the space of coadjoint orbits of the group $\SDiff(M)$ corresponding to the same measured Reeb graph $(\Gamma,f,\mu)$ has dimension $\dim \Hom_1(\Gamma)$ in the case when the subgraph $\Gamma^d$ is connected. 
\end{remark}
\begin{example}\label{example_coadjoint}
Consider the torus with one boundary component from Figure~\ref{fig:example_reeb} with the height function $F$ on it, and the corresponding Reeb graph $\Gamma_F$. In this case $\Hom_1(\Gamma_F^d)=0$ and $\Hom_1(\Gamma_F,\Gamma_F^d)=1.$ Therefore, the corresponding space of coadjoint orbits is one-dimensional. 
\end{example}
Before we proceed with the proof of Theorem~\ref{classification_of_orbits} let us formulate and prove two lemmas.
\begin{lemma}\label{dashedSurfaceCohomology}
Let $M$ be a connected oriented surface with non-empty boundary, and let $F$ be a simple Morse function on $M$. Then 
$$
\dim \Hom_1(M^d_F) = \dim \Hom_1(\Gamma^d_F)+ \dim \Hom_0(\Gamma^s_F\cap\Gamma^d_F).
$$
\end{lemma}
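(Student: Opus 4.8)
The plan is to read off $\dim \Hom_1(M^d_F)$ from the Euler characteristic of $M^d_F$, exploiting the fibration-like structure of the Reeb projection $\pi\colon M^d_F\to\Gamma^d_F$. I first record the shape of $M^d_F$. Every dashed edge $e$ carries segment-type levels, each of which has its two endpoints on $\partial M$, so each band $\pi^{-1}(e)$ meets $\partial M$; consequently no connected component of $M^d_F$ is a closed surface. Moreover $M^d_F$ is a compact surface with non-empty boundary, except that over the type~III vertices two boundary arcs get pinched to a point (there $M^d_F$ locally looks like two wedges $\{0\le q\le p^2\}$ meeting at the critical point). Such a space is homotopy equivalent to a finite graph, so $\Hom_2(M^d_F)=0$ and
\[
\dim \Hom_1(M^d_F)=\dim \Hom_0(M^d_F)-\chi(M^d_F).
\]
The identical relation $\dim \Hom_1(\Gamma^d_F)=\dim \Hom_0(\Gamma^d_F)-\chi(\Gamma^d_F)$ holds for the graph, so it suffices to prove the two identities $\dim \Hom_0(M^d_F)=\dim \Hom_0(\Gamma^d_F)$ and $\chi(M^d_F)=\chi(\Gamma^d_F)-\dim \Hom_0(\Gamma^s_F\cap\Gamma^d_F)$.

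For the first identity I would use that $\pi|_{M^d_F}$ is a surjective quotient map all of whose fibres are connected: the non-singular fibres are segments, and the singular fibre over any vertex of type I--V is a single level component. Along an edge the segment fibres vary continuously, and the singular level over a vertex glues together the fibres of the incident edges, so the preimage of each connected component of $\Gamma^d_F$ is connected. Hence $\pi$ induces a bijection on connected components.

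The heart of the argument is the second identity, which I would establish by additivity of the compactly supported Euler characteristic $\chi_c$ over the locally closed decomposition
\[
M^d_F=\Big(\textstyle\bigsqcup_{v\in V(\Gamma^d_F)}\pi^{-1}(v)\Big)\ \sqcup\ \Big(\textstyle\bigsqcup_{e\in E(\Gamma^d_F)}\pi^{-1}(e^{\circ})\Big),
\]
together with $\chi_c(M^d_F)=\chi(M^d_F)$ for the compact set $M^d_F$. Over an open edge the band is a product $e^{\circ}\times[0,1]$, contributing $\chi_c=(-1)\cdot 1=-1$, so the edges contribute $-\#E(\Gamma^d_F)$ in all. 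Over a vertex $v$ the contribution is $\chi(\pi^{-1}(v))$, and here I would read the topology of the singular level off the local normal forms of Theorems~\ref{theorem:md} and~\ref{theorem:md_boundary} and the level-set pictures in Table~\ref{table:types}. For types~I, II and IV the singular level is contractible — respectively a point, a single arc tangent to $\partial M$ at the boundary critical point, and two arcs crossing transversally at the interior saddle — so $\chi(\pi^{-1}(v))=1$; for types~III and V it contains exactly one circle (the limiting circle-type level, pinched to $\partial M$ in type~III and carrying two arcs to $\partial M$ through the saddle in type~V), so $\chi(\pi^{-1}(v))=0$. Writing $n_{\mathrm{III}},n_{\mathrm V}$ for the numbers of such vertices, this gives
\[
\chi(M^d_F)=\big(\#V(\Gamma^d_F)-n_{\mathrm{III}}-n_{\mathrm V}\big)-\#E(\Gamma^d_F)=\chi(\Gamma^d_F)-(n_{\mathrm{III}}+n_{\mathrm V}).
\]
Finally, the vertices of $\Gamma^s_F\cap\Gamma^d_F$ are exactly those incident to both a solid and a dashed edge, i.e. the vertices of types~III and V, and each is an isolated point of the intersection, so $n_{\mathrm{III}}+n_{\mathrm V}=\dim \Hom_0(\Gamma^s_F\cap\Gamma^d_F)$.

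Combining the two identities yields $\dim \Hom_1(M^d_F)=\dim \Hom_0(\Gamma^d_F)-\chi(\Gamma^d_F)+\dim \Hom_0(\Gamma^s_F\cap\Gamma^d_F)=\dim \Hom_1(\Gamma^d_F)+\dim \Hom_0(\Gamma^s_F\cap\Gamma^d_F)$. I expect the main obstacle to be the vertex-by-vertex computation of $\chi(\pi^{-1}(v))$: one must verify from the local models that a non-contractible loop appears in the singular level precisely at the type~III and type~V vertices. This is the geometric mechanism — a circle-type level pinched to the boundary or to an interior saddle — by which each vertex of $\Gamma^s_F\cap\Gamma^d_F$ produces one extra $1$-cycle in $M^d_F$ beyond those already carried by $\Gamma^d_F$.
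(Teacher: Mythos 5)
Your proof is correct, but it follows a genuinely different route from the paper's. The paper collapses each circle of $M^d_F\cap M^s_F$ (one for each type~III or type~V vertex) to a point, asserts that this drops $\dim\Hom_1$ by exactly the number of circles collapsed, and then observes that the resulting surface $\tilde M$ carries a simple Morse function whose Reeb graph is all-dashed and coincides with $\Gamma^d_F$, so that $\tilde M$ is homotopy equivalent to $\Gamma^d_F$; the identity follows. You instead run an Euler-characteristic computation: $\Hom_2(M^d_F)=0$ because every component meets $\partial M$, so $\dim\Hom_1=\dim\Hom_0-\chi$; you match connected components via connectedness of the fibres of $\pi$; and you compute $\chi(M^d_F)$ by additivity of $\chi_c$ over the vertex fibres and open edge bands, localizing the discrepancy $\chi(\Gamma^d_F)-\chi(M^d_F)$ exactly at the type~III and type~V vertices, whose singular levels contain a circle. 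Your vertex-by-vertex verification of $\chi(\pi^{-1}(v))$ is the crux, and it checks out against the local normal forms. As for what each approach buys: the paper's argument is shorter and reduces to a known semi-global fact (a surface whose Reeb graph is all-dashed retracts onto that graph), but it leaves implicit both that the collapsed circles are linearly independent in $\Hom_1(M^d_F)$ --- true because every component of $M^d_F$ has additional boundary on $\partial M$ --- and that the quotient is again a surface carrying a simple Morse function. Your argument is more elementary and self-contained, makes the mechanism (one extra $1$-cycle per vertex of $\Gamma^s_F\cap\Gamma^d_F$) completely explicit, and sidesteps the quotient construction; the price is that you must justify that $M^d_F$ is homotopy equivalent to a finite graph despite the non-manifold pinch points over type~III vertices, which your description of the local model $\{0\le q\le p^2\}$ handles adequately.
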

\begin{proof}
Let $\tilde{M}$ be the smooth surface obtained from the surface $M^d_F$ by 
contracting each circle in $M^d_F\cap M^s_F$ to a point. It is clear that 
$$
\dim \Hom_1(M^d_F)=\dim \Hom_1(\tilde{M})+ \dim \Hom_0(\Gamma^s_F\cap\Gamma^d_F).
$$
Let $p$ be the canonical projection $M\to \tilde{M}.$ The function $F$ descends to a simple Morse function $\tilde{F}\colon \tilde{M}\to \R$ such that
$
F=\tilde{F}\circ p.
$
The Reeb graph $\Gamma_{\tilde{F}}$ consists only of dashed edges, and it is coincides with $\Gamma^d_F.$ Then the surface $\tilde{M}$ is homotopy equivalent to graph $\Gamma_{\tilde{F}}.$ Therefore, 
\begin{equation}
\begin{aligned}
& \dim \Hom_1(M^d_F)=\dim \Hom_1(\tilde{M})+ \dim \Hom_0(\Gamma^s_F\cap\Gamma^d_F)= \dim \Hom_1(\Gamma_{\tilde{F}})+ \dim \Hom_0(\Gamma^s_F\cap\Gamma^d_F)\\
& =\, \dim \Hom_1(\Gamma^d_F)+ \dim \Hom_0(\Gamma^s_F\cap\Gamma^d_F).
\end{aligned}
\end{equation}
\end{proof}
\begin{lemma}\label{graphCohomology}
Let $M$ be a connected oriented surface possibly with boundary, and let $F$ be a simple Morse function on $M$. 
Assume that $[\gamma] \in  \Hom^1(M)$ is such that the integral of $\gamma$ over 
any $F$-level vanishes, and $\xi_{[\gamma]}$ is a zero element in $\Hom^1(\Gamma_F^d).$ 
Then there exists a $C^\infty$ function $H \colon M \to \R$ (with zero restriction on the surface $M^d_F$) such that the one-form $H\diff F$ is closed, and its cohomology class 
is equal to $[\gamma]$. Moreover, $H$ can be chosen in such a way that the ratio
$H/F$ is a smooth function.
\end{lemma}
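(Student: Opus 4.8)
The plan is to look for $H$ in the form $H=h\circ\pi$, where $h\colon\Gamma_F\to\R$ is a function on the Reeb graph that is smooth in the coordinate $f$ along each edge. This ansatz is in fact forced: if $H\,\diff F$ is closed then $\diff H\wedge \diff F=0$, so $H$ is constant on the connected components of the levels of $F$ and therefore descends to a function $h$ on $\Gamma_F$; conversely, for such an $H$ one has $\diff H=h'(F)\,\diff F$ and hence $\diff(H\,\diff F)=0$ automatically. The requirement $H|_{M^d_F}=0$ translates into $h\equiv 0$ on $\Gamma^d_F$, and for $H$ to be smooth across the gluing circles $M^s_F\cap M^d_F$ it suffices that $h$ vanish near every vertex of $\Gamma^s_F$. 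Thus I would reduce the whole problem to choosing a smooth $h$, supported in the interiors of the solid edges, so that the closed form $H\,\diff F$ lies in the class $[\gamma]$. Since a closed form is exact precisely when all of its periods vanish, it is enough to arrange that $\gamma-H\,\diff F$ has zero period over a generating set of $\Hom_1(M;\R)$.

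First I would dispose of the periods that do not see the solid part. Over a fibre circle $\pi^{-1}(x)$ the form $H\,\diff F=h(F)\,\diff F$ has zero period because $\diff F$ vanishes along the level, and $\gamma$ has zero period by hypothesis; so these agree. Over cycles contained in $M^d_F$ the form $H\,\diff F$ has zero period since $H|_{M^d_F}=0$, and I claim the same holds for $\gamma$. Indeed, by Lemma~\ref{dashedSurfaceCohomology} the group $\Hom_1(M^d_F)$ is generated by the gluing circles in $M^s_F\cap M^d_F$ (fibre circles, on which $\gamma$ integrates to zero) together with classes coming from $\Hom_1(\Gamma^d_F)$; the latter are realised by the cycles of the embedded graph $\tilde\Gamma_F$, on which $\gamma$ integrates to zero because $\xi_{[\gamma]}=0$ and $\pi\circ i$ is a homotopy equivalence (Lemma~\ref{lemma_hom_eqv}). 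Using the Mayer--Vietoris decomposition of $M=M^s_F\cup M^d_F$ along these circles, the remaining generators of $\Hom_1(M;\R)$ may be represented by cycles lying in $M^s_F$ that project to cycles (relative to $\Gamma^s_F\cap\Gamma^d_F$) in the solid graph, and it remains to match the periods of $\gamma$ and of $H\,\diff F$ over those.

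For such a cycle $\sigma$ projecting to a path in $\Gamma^s_F$ one computes $\int_\sigma H\,\diff F=\int_{\pi(\sigma)}h\,\diff f=\sum_e \varepsilon_e\,t_e$, where the sum is over the solid edges traversed by $\pi(\sigma)$, the sign $\varepsilon_e=\pm1$ records the orientation, and $t_e:=\int_e h\,\diff f$. Since $f$ is strictly monotone on each edge, I can realise any prescribed collection $\{t_e\}$ by choosing on each solid edge a smooth bump function of $f$, supported in the interior of the edge and away from the level $\{F=0\}$, and scaling it so that its integral equals $t_e$. As the period of $\gamma$ over these cycles defines a cohomology class on $\Gamma^s_F$, and every such class is represented by an edge cochain $e\mapsto t_e$, this choice makes the solid periods of $H\,\diff F$ agree with those of $\gamma$. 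Combined with the previous paragraph, all periods of $\gamma-H\,\diff F$ vanish, whence $[H\,\diff F]=[\gamma]$.

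It remains to check the smoothness claims. Because $h$ is smooth and vanishes in a neighbourhood of every vertex, $H=h\circ\pi$ is smooth on $M^s_F$, extends by zero to a smooth function on all of $M$, and vanishes identically near the critical points of $F$ and near $M^d_F$; in particular $H\,\diff F$ is a genuine smooth closed form. For the ratio $H/F$, note that away from $\{F=0\}$ the denominator is a nonvanishing smooth function, while near $\{F=0\}$ the bumps were chosen so that $H\equiv0$; hence $H/F$ is smooth everywhere. The step I expect to be the main obstacle is the homological bookkeeping of the second paragraph: one must verify carefully, via Mayer--Vietoris and Lemmas~\ref{dashedSurfaceCohomology} and~\ref{lemma_hom_eqv}, that the two hypotheses really do force the periods of $\gamma$ over every non-solid generator of $\Hom_1(M;\R)$ to vanish, so that the freedom in the solid edge integrals $t_e$ is enough to reach the class $[\gamma]$.
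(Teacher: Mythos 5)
Your construction of $H$ is exactly the paper's: take $H=h\circ\pi$ with $h$ smooth in $f$, vanishing on $\Gamma^d_F$ and near every vertex, and prescribe the integrals $\int_e h\,\diff f$ over the solid edges. The difference lies only in how the identity $[H\,\diff F]=[\gamma]$ is verified, and it is precisely there that your argument has a gap --- the one you flagged yourself. The claim that, after accounting for fibre circles and cycles in $M^d_F$, ``the remaining generators of $\Hom_1(M;\R)$ may be represented by cycles lying in $M^s_F$'' is false in general: for the torus with one hole of Figure~\ref{fig:example_reeb}, the longitude projects to the cycle of $\Gamma_F$ through the dashed edge from $BB$ to $C$, so it cannot be pushed into $M^s_F$. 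The generators you actually have to handle are \emph{mixed} cycles $\sigma=\sigma_s+\sigma_d$ with $\sigma_s$ a relative cycle in $(M^s_F,\,M^s_F\cap M^d_F)$ and $\sigma_d\subset M^d_F$. For such $\sigma$ your formula $\int_\sigma H\,\diff F=\sum_e\varepsilon_e t_e$ is still correct (the dashed part contributes nothing since $H|_{M^d_F}=0$), but the assertion that $\sigma\mapsto\int_\sigma\gamma$ ``defines a cohomology class on $\Gamma^s_F$'' --- i.e.\ that this period depends only on the solid-edge content of $\pi(\sigma)$ and is additive over edges --- is exactly the nontrivial content of the lemma, and you do not prove it: two mixed cycles with the same solid-edge content differ by a relative cycle of $(M^s_F,\,M^s_F\cap M^d_F)$ projecting to zero plus a chain in $M^d_F$, and one must check that the hypotheses kill the period of $\gamma$ over every such difference. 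This does not follow formally from the absolute statements you quote from Lemmas~\ref{dashedSurfaceCohomology} and~\ref{lemma_hom_eqv}.

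The paper closes this point by working in cohomology rather than with period generators: from $[i^*\gamma]=0$ in $\Hom^1(M^d_F)$ and the long exact sequence of the pair $(M,M^d_F)$ it produces a representative $\tilde\gamma$ of $[\gamma]$ vanishing \emph{identically} on $M^d_F$, and then uses the identification $\Imm(\pi_s)^*=\Ann\Ker(\pi_s)_*$ for $\pi_s\colon M^s_F\to\Gamma^s_F$ (the kernel of $(\pi_s)_*$ being generated by fibre circles, which $\tilde\gamma$ annihilates) to write the restriction of $[\tilde\gamma]$ to $M^s_F$ as $(\pi_s)^*[\alpha]$ for an edge cochain $\alpha$; the values $\alpha_e$ are your $t_e$. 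If you want to keep the period-based formulation, you must run Mayer--Vietoris (or the exact sequence of the pair) honestly to show that $\Hom_1(M;\R)$ is generated by fibre circles, cycles in $M^d_F$, and mixed cycles, and that on the latter the period of $\gamma$ factors through $\Hom_1(\Gamma_F,\Gamma^d_F)$. The rest of your write-up --- the treatment of the dashed part via $\tilde\Gamma_F$ and $\xi_{[\gamma]}=0$, the realisation of arbitrary $t_e$ by bump functions on the solid edges, and the smoothness of $H$ and of $H/F$ --- is correct and matches the paper.
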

\begin{proof}
Denote by $i_d$ the inclusion $M_F^d\cap M_F^s \xhookrightarrow{} M^d_F,$ and denote by $\pi_d$ the restriction of the projection $\pi\colon M\to \Gamma_F$ on the surface $M^d_F.$ Note that the homomorphism
$
(\pi_d)_*\colon \Hom_1(M_F^d) \to \Hom_1(\Gamma_F^d)
$
is a surjection, and 
$
\Imm (i_d)_* \subset \Ker (\pi_d)_*.
$
It follows from Lemma~\ref{dashedSurfaceCohomology} that 
$$
\dim \Hom_1(M^d_F) = \dim \Hom_1(\Gamma^d_F)+ \dim \Hom_0(\Gamma^s_F\cap\Gamma^d_F).
$$
Hence the image of the homomorphism $(\pi_d)_*$ coincides with the kernel of the homomorphism $(\pi_d)_*.$
From above we conclude that the homomorphism 
$
\pi_*\colon \Hom^1(\Gamma_F^d) \to \Hom^1(M_F^d)
$
is an injection, and
$
\Imm (\pi_d)^* = \Ker (i_d)^*.
$
\par
Denote by $i$ the inclusion $M_F^d \xhookrightarrow{} M_F.$ 
Since the integral of $[\gamma]$ over any connected component of any closed $F$-level vanishes 
and $\xi_{[\gamma]}$ is a zero element in $\Hom^1(\Gamma_F^d),$
the cohomology class $[i^*\gamma]$ is a zero element in $\Hom^1(M_F^d).$
Consider the long exact cohomology sequence for the pair $(M_F, M^d_F)\colon$
\begin{alignat*}{2}
0 &\rightarrow \Hom^{0}(M_F^d) &\rightarrow \Hom^{0}(M_F)\rightarrow \Hom^1(M_F,M^d_F) \rightarrow \Hom^1(M_F) &\rightarrow \Hom^1(M^d_F) \rightarrow 0.
\end{alignat*}  
The cohomology class $[\gamma]$ on $M$ belongs to the kernel of the homomorphism 
$
i^*\colon \Hom^1(M_F)\to \Hom^1(M^d_F).
$ 
Hence it belongs to the image of the homomorphism
$
\Hom^1(M_F,M^d_F) \to \Hom^1(M_F),
$
i.e. there exists a one-form $\tilde{\gamma}$ such that 
$[\tilde{\gamma}]=[\gamma]$ and $\tilde{\gamma}|_{M^d_F}=0.$ 
\par
Denote by $\pi_s$ the restriction of the projection $\pi\colon M\to \Gamma_F$ on the surface $M^s_F.$ The homomorphism
$
(\pi_s)_*\colon \Hom_1(M_F^s) \to \Hom_1(\Gamma_F^s)
$
is a surjection, and its kernel is generated by those homology classes which are homologous to regular $F$-levels. From above we conclude that the homomorphism
$
(\pi_s)^*\colon \Hom^1(\Gamma_F^s) \to \Hom^1(M_F^s)
$
is an injection, and 
$
\Imm(\pi_s)^*=\Ann\Ker(\pi_s)_*
$ 
where 
$$
\Ann\Ker(\pi_s) := \left \{ \omega \in  \Hom^1(\Gamma_F^s) \mid 
\omega(c)=0 \iff c \in \Ker(\pi_s)_* \right \}.
$$
Therefore, there exists a one-cochain 
$
\alpha\in \Hom^1(\Gamma^s_F)
$ 
of the form
$
\alpha=\sum_{e\in E(\Gamma^s_F)}\alpha_e e^*
$
such that $[\tilde{\gamma}]=(\pi_s)^*[\alpha].$
\par
Recall that the function $f$ is the pushforward of the function $F$ to the graph $\Gamma_F.$ Consider a continuous function $ h \colon \Gamma_F \to \R$ such that 
\begin{longenum}
\item it is a smooth function of $f$ in a neighborhood of each point $x \in \Gamma_F$;
\item it vanishes whenever $x$ is sufficiently close to a vertex;
\item $h|_{\Gamma^d_F}=0;$
\item for each edge $e$, we have
$$
\alpha (e) = \int_e \! h\diff f.
$$
\end{longenum}
Obviously, such  a function does exist. Now, lifting $ h$ to $M$, we obtain a smooth function $H$ with 
the desired properties.
\end{proof}
\begin{proof}[Proof of Theorem~\ref{classification_of_orbits}]
Let us prove the first statement. The implication (a) $\Rightarrow$ (b) is immediate, 
so it suffices to prove the implication (b) $\Rightarrow$ (a). 
Let $\phi \colon \Gamma_{[\oneform]} \to \Gamma_{[\oneformtwo]}$ be an isomorphism 
of augmented circulation graphs. By Theorem~\ref{classification_of_functions_symplectic}, $\phi$ can be lifted to a symplectomorphism 
$ \Phi \colon M \to M$ that  maps the function $F = \Diff[\oneform]$ 
to the function $G = \Diff[\oneformtwo]$. Therefore, the $1$-form $\oneformthree$ defined by
$$
\oneformthree =  \Phi^*\oneformtwo - \oneform 
$$
is closed. 
\par
Assume that $ \Psi \colon M \to M $ is a symplectomorphism which maps the function $F$ to itself and is isotopic to the identity. Then the composition $\widetilde\Phi = \Phi\circ\Psi^{-1}$ maps $F$ to $G$, and
$$
[\widetilde\Phi^*\oneformtwo - \oneform] = [\Phi^*\oneformtwo - \Psi^*\oneform] = [\gamma] - [\Psi^*\oneform - \oneform].
$$
We claim that $\Psi$ can be chosen in such a way that $\widetilde\Phi^*\oneformtwo - \oneform$ is exact, i.e.  one has the equality of the cohomology classes
$$
 [\Psi^*\oneform - \oneform] = [\gamma].
$$
Moreover, we show that there exists a time-independent symplectic vector field  $X$ that
preserves $F$ and satisfies
\begin{align}\label{homotopy2}
 [\Psi_t^*\oneform - \oneform] = t[\gamma]\,,
\end{align}
where $\Psi_t$ is the flow of $X$. 
Differentiating \eqref{homotopy2} with respect to $t$, we get in the left-hand side
$$
 [\Psi_t^*L_X\oneform] =  [L_X\oneform] =  [i_X\diff \oneform] = [F\cdot i_X \omega]\,,
$$
since the form $L_X \oneform$ is closed and $\Psi_t^*$ does not change its cohomology class. Thus
\begin{align}\label{homology2}
[F\cdot i_X \omega] = [\gamma].
\end{align}
Since $\Phi$ preserves the circulation function, the integrals of $\oneformthree$ over all connected components of $F$-levels vanish.
In addition, $\xi_{[\Phi^*\alpha]}=\xi_{[\alpha]}.$
Therefore, by Lemma \ref{graphCohomology}, there exists a smooth function $H$ such that
$$
[\gamma] = [H\diff F].
$$
Now we set 
$$
X :=  \frac{H}{F} \,\omega^{-1}\diff F.
$$
It is easy to see that the vector field $X$ is zero on $M^d,$ symplectic, preserves the levels of $F$, and satisfies the equation \nolinebreak \eqref{homology2}. Therefore, its phase flow map $\Psi=\Psi_1$  has the required properties.
\par
Now let us prove the second statement. It follows from Theorem~\ref{classification_of_functions_symplectic} that there exists a symplectic surface $(M,\omega)$ and a simple Morse function $F\colon M\to\R$ such that $\Gamma_F=\Gamma.$ Consider the surface $M^s$ and the restriction $F|_{M^s}$ of the function $F$ to the surface $M^s.$ The restriction $F|_{M^s_F}$ is a Morse function, and it is constant on the boundary $\partial M^s_F$ since it is formed by some of closed $F$-levels. However, it is not necessarily a function of $CB$-type since it has hyperbolic critical points on the boundary whenever the graph $\Gamma_F$ has vertices of type $V.$ In order to apply the Theorem~\ref{classification_of_orbitsCB}
we need to `cut out' from $M^s$ these hyperbolic critical points. Let $v\in \Gamma_F$ be a vertex of type $V,$ let $e\subset \Gamma_F$ be the only solid edge incident to $v,$ and also let $u\in\Gamma^s_F$ be the only other vertex adjacent to $e.$ The edge $e$, with endpoints $\{v,u\}$ can be uniquely subdivided into two edges, say $e_{v\to w}$ and $e_{w\to u}$, connecting to a new vertex $w$ such that $\mu(e_{v\to w})=\mu(e_{w\to u}).$ After that we cut out the edge $e_{v\to w}.$ Denote by $\Gamma'$ the (abstract) measured Reeb graph obtained by applying the above procedure to all vertices of type $V$ in the graph $\Gamma_F$ (see Figure \ref{fig:graph_subdivision}).
\begin{figure}
    \centering
    \begin{tikzpicture}[scale = 1]
    \tikzstyle{subj} = [circle, minimum width=3pt, fill, inner sep=0pt]
    \draw[->] (1.5,-0.5) -- (2.5,-0.5) node[above, midway] {};
    \node[subj, label=right:$v$] (nodeA) at (0,0) {};
    \node[subj, label=right:$u$] (nodeB) at (1,-1) {};
    \node (nodeC) at (-1,-1) {};
    \node (nodeD) at (-1,1) {};
    \node (nodeP) at (2,-2) {};
    \node (nodeQ) at (0,-2) {};
    \draw (nodeA) -- (nodeB);
    \draw[dashed] (nodeA) -- (nodeC);
    \draw[dashed] (nodeA) -- (nodeD);
    \draw (nodeB) -- (nodeP);
    \draw (nodeB) -- (nodeQ);
    \draw (0.5,-0.5) node[below left] {$e$};
    \node[subj, label=right:$v$] (nodeA1) at (4+0,0) {};
    \node[subj, label=right:$u$] (nodeB1) at (4+1,-1) {};
    \node[subj, label=right:$w$] (nodeF) at (4+0.5,-0.5) {};
    \node (nodeC1) at (4-1,-1) {};
    \node (nodeD1) at (4-1,1) {};
    \node (nodeP1) at (4+2,-2) {};
    \node (nodeQ1) at (4+0,-2) {};
    \draw (nodeA1) -- (nodeB1);
    \draw[dashed] (nodeA1) -- (nodeC1);
    \draw[dashed] (nodeA1) -- (nodeD1);
    \draw (nodeB1) -- (nodeP1);
    \draw (nodeB1) -- (nodeQ1);
\end{tikzpicture}
    \caption{The illustration to the construction of the graph $\Gamma'$. The vertex $w$ subdivides the edge $e$ with endpoints $\{v,u\}$ into the edges $e_{v\to w}$ and $e_{w\to u}.$}
    \label{fig:graph_subdivision}
\end{figure}
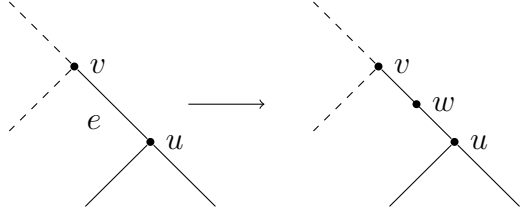 
Denote by $M'\subset M^s_F$ the preimage 
$
\pi^{-1}(\Gamma')
$
It is clear from the above that the restriction 
$F|_{\tilde{M}}$ is a function of $CB$-type. Therefore, it follows from Theorem~\ref{classification_of_orbitsCB} that there exists a one-form $\oneform_0$ on $M'$ such that 
$
\circulation_{[\oneform_0]}=\circulation|_{\Gamma'}.
$ 
It is clear that the form $\oneform_0$ can be extended (along the cylinders $M^s\setminus M'$) on all of $M^s$ in such a way that 
$
\circulation_{[\oneform_0]}=\circulation|_{\Gamma}.
$ 
On the other hand, there exists a one-form $\oneform_1$ on $M$ such that
$
[(i_d)^*\alpha_1]=[(i_s)^*\alpha_0]
$
and
$
\xi_{[\alpha]}=\xi
$
since 
$$
\dim \Hom_1(M^d_F) = \dim \Hom_1(\Gamma^d_F)+ \dim \Hom_1(M^d_F\cap M^s_F).
$$
Using an appropriate partition of unity we construct a one-form $\alpha$ (as a combination of one-forms 
$\alpha_0$ and $\alpha_1$) such that
$
\circulation_{[\alpha]}=\circulation_{[\alpha_0]}=\circulation
$
and
$
\xi_{[\alpha]}=\xi_{[\alpha_1]}=\xi.
$
Hence the augmented circulation graph $\Gamma_{[\alpha]}$ coincides with $\Gamma.$
\end{proof}
\section{Conclusion}\label{section:Final_remarks}
In this paper we have classified simple Morse functions on symplectic surfaces and generic coadjoint orbits of symplectomorphism groups of surfaces.
This allowed us to completely resolve the questions posed in~\cite{izosimov2017classification}.
The answer to Problem~$5.5$ on reconstruction of a surface with boundary from its Reeb graph was given in the work~\cite{hladysh2019simple}. As described in Section~\ref{section:global} the idea is to add a cyclic order for the dashed edges incident to II- or IV-vertices to the structure of an abstract Reeb graph in order to reconstruct the corresponding surface. Table~\ref{table:types} gives an answer to Problem~$5.6$ on measure asymptotics on the graph. The required by Problem~$5.7$ compatibility conditions are described in Definition~\ref{def:compatible}. And finally, Theorem~\ref{classification_of_orbits} describes the required by Problem~$5.8$ additional invariants for coadjoint orbits in case of surfaces with boundary.
\par
One should mention two other relevant but not overlapping with us classification results for symplectic surfaces:
\begin{enumerate}[a)]
    \item Dufour, Molino, and Toulet classified in~\cite{dufour1994classification} simple Morse fibrations on closed symplectic surfaces under the action of symplectic diffeomorphisms.
    \item Bolsinov~\cite{bolsinov1995smooth} and Kruglikov~\cite{kruglikov1997exact} classified Hamiltonian vector fields tangent to the boundary on surfaces up to the action of arbitrary diffeomorphisms.
\end{enumerate}
It would be interesting to extend those classifications for surfaces with boundary. (Note that in~\cite{kruglikov1997exact} in the contrast with the present work Hamiltonian functions are assumed to be constant on the boundary.)
It also would be very interesting to classify Morse functions and Morse orbits for the action of 
\begin{enumerate}[a)]
    \item the group $\Ham(M)$ of Hamiltonian diffeomorphisms of a surface $M;$ 
    \item the connected component $\SDiff_0(M)$ of the identity in the group $\SDiff(M)$ for the case of surfaces $M$ with boundary. 
\end{enumerate}
This would generalise the corresponding results of~\cite{izosimov2016coadjoint} and the present work to these important subgroups of the symplectomorphism groups.
\begin{appendices}
\section{Euler's equation and coadjoint orbits}\label{section:euler}
In this Appendix we describe following~\cite{izosimov2016coadjoint} the hydrodynamical motivation of the above classification problems. Consider a symplectic surface $(M,\omega)$ with boundary $\partial M.$ We denote by $\SDiff(M)$ the Lie group of all symplectomorphisms of $M,$ and by $\SVect(M)$ the corresponding Lie algebra of divergence-free vector fields on $M.$ A linear functional $I$ on $\SVect(M)$ is called \emph{regular} if there exists a smooth 1-form $\xi_I$ such that the value of $I$ on a vector field $v$ is the pairing between $\xi_I$ and $v:$
$$I(v)=\int_M \xi_I(v)\omega.$$ 
The space $\SVect^*(M)$ of regular functionals on $\SVect(M)$ is a dense subset in the space of all continues linear functionals on $\SVect(M).$ It turns out that the space of regular functionals $\SVect^*(M)$ can be identified with the space of cosets 
$\Omega^1(M)/d\Omega^0(M),$ since exact $1$-forms give zero functionals on divergence-free vector fields. Moreover, the natural action of the group $\SDiff(M)$ on the space of cosets $\Omega^1(M)/d\Omega^0(M)$ by means of pull-backs coincides with the coadjoint action of the group of symplectomorphisms $\SDiff(M)$. The proof of this fact can be found in~\cite{arnold1999topological} (see Section I.8). More information about infinite-dimensional Lie groups can be found in~\cite{khesin2008geometry}.
\par
Now let us fix a Riemannian metric $(\cdot,\cdot)$ on the surface $M$ such that the corresponding area form coincides with the symplectic form $\omega.$ The  motion  of an inviscid incompressible fluid  on  $M$  is described by  the Euler equation
\begin{equation}\label{idealEuler}
\partial_t v+\nabla_v v = -\nabla p
\end{equation}
describing an evolution of a  divergence-free velocity field $v$ of a fluid flow in $M$, where 
${\rm div}\, v=0$ and the field $v$ is tangent to the boundary $\partial M.$ 
The pressure function $p$ entering the Euler equation is defined uniquely modulo 
an additive constant by this equation along with the divergence-free constraint 
on the velocity $v$.
\par
The metric $(\cdot,\cdot)$ allows us to
identify the (smooth parts of) the Lie algebra and its dual by means of the so-called inertia operator:
given a vector field $v$ on $M$  one defines the 1-form $\alpha=v^\flat$ 
as the pointwise inner product with vectors of the velocity field $v$:
$v^\flat(W) := (v,W)$ for all $W\in T_x M.$
The Euler equation \eqref{idealEuler} rewritten on 1-forms is
$$\partial_t \alpha+L_v \alpha=-dP\,$$
for the 1-form $\alpha=v^\flat$ and an appropriate function $P$ on $M$.
In terms of the cosets of 1-forms $[\alpha]=\{\alpha+df\,|\,f\in C^\infty(M)\}\in \Omega^1(M) / \diff \Omega^0(M)$, the Euler equation looks as follows: 
\begin{equation}\label{1-forms}
\partial_t [\alpha]+L_v [\alpha]=0
\end{equation}
on the dual space $\mathfrak g^*$, where $L_v$ is the Lie derivative along the field $v$. 
\par
The Euler equation \eqref{1-forms} shows that the coset of 1-forms $[\alpha]$ evolves by an area-preserving change of coordinates, i.e. it remains in the same coadjoint orbit in $\mathfrak g^*$. This is why
invariants of coadjoint orbits of cosets $[\alpha]$ describe first integrals, called Casimirs, of the Euler equation, and their complete classification is important in many areas of ideal fluid dynamics.
\end{appendices}
\bibliographystyle{plain}
\bibliography{kirillov_orbits}
\addcontentsline{toc}{section}{References}
\end{document}